\newtheorem{thma*}{Theorem A}
\newtheorem{thmb*}{Theorem B}
\newtheorem{thm}{Theorem}[section]
\newtheorem*{thm*}{Theorem}
\newtheorem{cor}[thm]{Corollary}
\newtheorem*{cor*}{Corollary}
\newtheorem{lem}[thm]{Lemma}
\newtheorem*{lem*}{Lemma}
\newtheorem{prop}[thm]{Proposition}
\newtheorem*{prop*}{Proposition}
\newtheorem*{thma}{Theorem A}
\newtheorem*{thmb}{Theorem B}
\theoremstyle{definition}
\newtheorem{defn}[thm]{Definition}
\newtheorem*{defn*}{Definition}
\newtheorem*{conjecture*}{Conjecture}
\newtheorem*{condition*}{Condition}
\newtheorem*{assumption*}{Assumption}
\newtheorem{algo}{Algorithm}
\newtheorem*{algoa}{Algorithm 1$'$}
\theoremstyle{remark}
\newtheorem{rem}[thm]{Remark}
\newtheorem*{rem*}{Remark}
\newtheorem{example}[thm]{Example}
\newtheorem*{problem*}{Problem}
\DeclareMathOperator{\readword}{rw}
\DeclareMathOperator{\lm}{LM}
\DeclareMathOperator{\sort}{sort}
\DeclareMathOperator{\osp}{OSP}
\DeclareMathOperator{\shuff}{Sh}
\DeclareMathOperator{\maj}{maj}
\DeclareMathOperator{\inv}{inv}
\DeclareMathOperator{\Des}{Des}
\DeclareMathOperator{\Inv}{Inv}
\newcommand{\antisym}{N}
\newcommand{\flag}{\mathcal{F}}
\DeclareMathOperator{\springer}{Sp}
\newcommand{\ribtabs}{\mathcal{R}}
\newcommand{\jmaj}{J^{\maj}}
\newcommand{\invtabs}{\mathcal{A}}
\newcommand{\majtabs}{\mathcal{D}}
\newcommand{\desc}{\majtabs}
\newcommand{\Q}{\mathbb{Q}}
\newcommand{\C}{\mathbb{C}}
\newcommand{\jlamaj}{J^{\maj}_{\lambda}}
\newcommand{\gpring}{R}
\newcommand{\ghideal}{\mathcal{I}}
\newcommand{\rib}{\mathcal{R}}
\newcommand{\xn}{{\mathbf{x}}}
\newcommand{\an}{{\mathbf{a}}}
\newcommand{\bn}{{\mathbf{b}}}
\newcommand{\Ct}{\tilde{C}}
\newcommand{\Ht}{\tilde{H}}
\newcommand{\hs}{\mathcal{H}}
\newcommand{\frob}{\mathcal{F}}
\newcommand{\raybas}{\mathcal{B}^{\maj}}
\newcommand{\revla}{\overleftarrow{\lambda}}
\newcommand{\PF}{\mathcal{PF}}
\newcommand{\basis}{\mathcal{B}}
\newcommand{\dinvn}{\mathbf{dinv}}
\DeclareMathOperator{\dyckpath}{path}
\DeclareMathOperator{\artin}{\mathcal{A}}
\DeclareMathOperator{\invt}{invt}
\DeclareMathOperator{\majt}{majt}
\DeclareMathOperator{\dinv}{dinv}
\DeclareMathOperator{\Dinv}{Dinv}
\DeclareMathOperator{\area}{area}
\DeclareMathOperator{\doff}{doff}
\DeclareMathOperator{\height}{ht}
\DeclareMathOperator{\lev}{lev}
\title{A Descent basis for the Garsia-Procesi module}
\author{Erik Carlsson, Raymond Chou}
\begin{document}

\maketitle

\begin{abstract}

We assign to each Young diagram $\lambda$ a subset $\raybas_{\lambda'}$ 
of the collection of Garsia-Stanton descent monomials, 
and prove that it determines a 
basis of the Garsia-Procesi module $R_\lambda$, 
%whose elements consist of a 
%of the Garsia-Stanton descent basis 
%of the coinvariant algebra, and prove that it 
%determines a basis of the Garsia-Procesi module
whose graded character is the Hall-Littlewood polynomial $\tilde{H}_{\lambda}[X;t]$
\cite{garsia1992certain,concini1981symmetric,tanisaki1982defining}.
This basis is a major index analogue of the basis 
$\mathcal{B}_\lambda \subset R_\lambda$ defined by certain recursions
in \cite{garsia1992certain},
in the same way that the descent basis of  
\cite{garsia1984group} is related to the 
Artin basis of the coinvariant algebra $R_n$, which in fact corresponds to the 
case when $\lambda=1^n$.
By anti-symmetrizing a subset of this basis with respect to the corresponding Young subgroup under the Springer action, 
we obtain a basis in the parabolic case, as well as a corresponding formula for the expansion of $\tilde{H}_{\lambda}[X;t]$.
Despite a similar appearance, 
it does not appear obvious how 
to connect these formulas appear to the specialization of
the modified Macdonald formula of
\cite{haglund2005combinatorial} at
$q=0$.

\end{abstract}

\section{Introduction}

The coinvariant algebra is the quotient ring
\[R_n=\Q[\xn]/I_n,\quad
I_n=(e_1(\xn),...,e_n(\xn))\]
where $e_j(\xn)$ is the elementary symmetric
polynomial in the set of variables $\xn=\{x_1,...,x_n\}$.
It is isomorphic to the cohomology ring of the usual complex flag variety $R_n \cong H^*(\mathcal{F}_n)$,
where the $x_i$ correspond to
the Chern classes of the quotient
line bundles, so that its graded dimension is the $q$-factorial $[n]_q!$. 
There is an action of the symmetric group, which is a simple instance of the Springer action, defined by $\sigma \cdot f(x_1,...,x_n)=
f(x_{\sigma_1},...,x_{\sigma_n})$,
whose graded Frobenius character
$\frob_q R_n$ 
is the Hall-Littlewood polynomial $\Ht_{1^n}(X;q)$ associated to a vertical strip.

In addition to the basis of Schubert polynomials, 
there are two well-known monomial bases.
The first, due to Artin \cite{artin1944galois}, consists of the sub-staircase monomials
\begin{equation}
\label{eq:artinbasis}
\basis_n=\left\{x_1^{a_1}\cdots x_n^{a_n}: 0 \leq a_k < k\right\}
\end{equation}
whose graded sum is clearly the $q$-factorial.
These also turn out to correspond to leading terms with respect to the Gr\"{o}bner basis of $I_n$ determined by the lex order.
A second description, in which the basis is indexed by permutations, is \[\basis_n=\{f_\sigma(\xn):\sigma\in S_n\},\quad
f_{\sigma}(\xn)=\prod_{i<j:\sigma_i>\sigma_j} x_{\sigma_i}\]
so that taking degrees gives the inversion formula
\[\sum_{\sigma \in S_n} q^{\inv(\sigma)}=[n]_q!,\quad
\inv(\sigma)=\#\{(i,j):1\leq i<j\leq n,\,\sigma_i>\sigma_j\}.\]

The second basis is known as the Garsia-Stanton descent basis \cite{garsia1984group}, given by
\[\basis'_{n}=\{g_\sigma(\xn):\sigma\in S_n\},\quad
g_\sigma(\xn)=\prod_{i:\sigma_{i}>\sigma_{i+1}} x_{\sigma_1}\cdots x_{\sigma_i}.\]
Writing $\xn^{\an}=x_1^{a_1}\cdots x_n^{a_n}$ for a composition
$\an=(a_1,...,a_n)$, the set of all possible exponents, called descent compositions, is denoted by
\[\desc_n=\left\{\an: \xn^{\an}=g_\sigma(\xn)\mbox{ for some $\sigma\in S_n$}\right\}\]
The function that assigns $\sigma$ to the corresponding composition 
is called the major index table $\an=\majt(\sigma)$, whose norm is the major index.
Counting those elements with degree gives
MacMahon's formula \cite{macmahon1913indices}:
\[\sum_{\sigma \in S_n} t^{\maj(\sigma)}=[n]_t!,\quad
\maj(\sigma)=\sum_{1\leq i\leq n-1:\sigma_{i}>\sigma_{i+1}} i.\]

The monomials $g_\sigma(\xn)$ may also be described as leading terms, but for a different ordering of the monomials called the descent order,
determined by $\xn^{\an}\leq_{des} \xn^{\bn}$ if
\begin{enumerate}
    \item $\sort(\an,>) <_{lex} \sort(\bn,>)$ or
\item $\sort(\an)=\sort(\bn)$ and
$\an \leq_{lex} \bn$
\end{enumerate}
Note that $\leq_{des}$ is not a monomial order for the purposes of Gr\"{o}bner bases. E. Allen gave an explicit algorithm for reducing any monomial in this order \cite{AllenDescent}.

Given a Young diagram $\lambda=(\lambda_1,...,\lambda_l)$ of size $n$, let $\springer_\lambda \subset \flag_n$ denote the Springer fiber associated to a nilpotent matrix with Jordan blocks of given by the elements of $\lambda$. It was shown in \cite{springer1978construction} that there is an action of the symmetric group on the cohomology
ring $H^*(\springer_\lambda)$ which
is compatible with the action on $R_n$
under the pullback of the inclusion
$i_\lambda : \springer_\lambda\rightarrow \flag_n$, and realizes a graded version of the induced representation from the sign representation of the Young subgroup
$S_\lambda=S_{\lambda_1}\times \cdots \times S_{\lambda_l}\subset S_n$.
It was shown in \cite{springer1976trigonometric} that the graded decomposition of this action realizes a version of the Hall-Littlewood polynomials.
In \cite{concini1981symmetric}, De Concini and Procesi gave an explicit description of $H^*(\springer_\lambda)$ as a ring.
Tanisaki \cite{tanisaki1982defining}, 
then gave an explicit set of generators 
of the ideal $I_\lambda$ which is the kernel of the map $\Q[\xn]\rightarrow H^*(\springer_\lambda)$ in terms
of certain elementary symmetric functions
in any $k$-element subset of the variables $\xn$:
\[I_\lambda=\left\{e_r(x_{i_1},...,x_{i_k}): \{i_1,...,i_k\}\subset \{1,...,n\},\ k\geq r > k-d_k(\lambda) \right\},\]
noticing that $I_{\lambda}$ specializes to $I_n$ when $\lambda=(1^n)$.
Here $d_k(\lambda)$ are certain numbers bounding the possible values of $r$
defined below. 
Garsia and Procesi \cite{garsia1992certain}
studied the graded $S_n$-module structure of 
the quotient ring denoted by
$R_\lambda=\Q[\xn]/I_\lambda$. They analyzed some 
recursively defined
as monomials in the generating variables
$x_1,...,x_n$. In the case of a vertical strip $\lambda=(1^n)$ this
basis specializes to the Artin basis $\{f_\sigma(\xn)\}$.

In this paper, we will study monomial basis of $R_\lambda$
which specializes to the descent monomials in the same way that the
Springer monomials specialized to the Artin basis. It turns out to have a simple description as $\majtabs_{\lambda'}$ where
\[\desc_{\lambda}=\left\{\an: 
\an|_{A_i}\in \desc_{\lambda_i} \mbox{ for some $(A_1|\cdots|A_l)$}\right\}\]
where $(A_1|\cdots |A_l)$ is an ordered set partition of
$\{1,...,n\}$ with sizes $|A_i|=\lambda_i$, and $\an|_{A_i}$ denotes the composition of length $\lambda_i$ consting of those $a_j$ for $j\in A_i$, written in the original order.
In fact there may be more than one way to represent the elements of $\desc_{\lambda}$, and as a result, it turns out to be surprisingly difficult to show that this set has the correct size to be a basis of $R_{\lambda'}$, which is multinomial coefficient in $\lambda'$.
It turns out that $\desc_{\lambda} \subset \desc_n$, and thus we may define the corresponding set of permutations
\[J_{\lambda}^{\maj}=\left\{\majt^{-1} (\an): \an \in \desc_{\lambda}\right\}.\]

We then prove two theorems. The first shows that the corresponding monomials, which we denote by $\raybas_{\lambda}$, constitute a basis:
\begin{thma}
The Garsia-Stanton monomials 
$g_\sigma(\xn)$ for $\sigma \in \jmaj_{\lambda'}$ are a vector space basis of the Garsia-Procesi module $R_{\lambda}$. Moreover, they are the leading terms for the descent order, 
and we obtain the expected statements for the parabolic versions by intersecting $\jmaj_{\lambda'}$ with 
where $\shuff'(\mu)\subset S_n$, the set of
maximal left coset representatives (called reverse shuffles) of 
$S_\mu\backslash S_n$ in the Bruhat order.
\end{thma}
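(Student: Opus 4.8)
The plan is to realize $R_\lambda$ as a quotient of the coinvariant algebra and transport the descent basis across. Since the Tanisaki generators include $e_1(\xn),\dots,e_n(\xn)$ (the case $k=n$), we have $I_n\subseteq I_\lambda$ and hence a surjection $R_n\twoheadrightarrow R_\lambda$; as $\{g_\sigma(\xn):\sigma\in S_n\}$ is a basis of $R_n$, its image spans $R_\lambda$. The argument then splits into two halves: a \emph{spanning} step, which reduces every $g_\sigma$ with $\sigma\notin\jmaj_{\lambda'}$ into the span of descent-smaller monomials modulo $I_\lambda$, and a \emph{counting} step, which identifies $|\jmaj_{\lambda'}|$ with $\dim R_\lambda=n!/\prod_i\lambda_i!$. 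Together these force $\{g_\sigma:\sigma\in\jmaj_{\lambda'}\}$ to be a basis, and the spanning step simultaneously yields the leading-term statement.

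For the spanning step, the key lemma is that $\sigma\notin\jmaj_{\lambda'}$ is detected by a single Tanisaki relation. Unwinding the definition, the composition $\an=\majt(\sigma)$ fails to admit an ordered set partition $(A_1|\cdots|A_{l'})$ with $\an|_{A_i}\in\desc_{\lambda'_i}$ exactly when some $k$-subset of the variables carries too much exponent, a condition that matches, under conjugation of partitions, the bounds $d_k(\lambda)$ governing the generators $e_r(x_{i_1},\dots,x_{i_k})\in I_\lambda$. I would then show that for such $\sigma$ there is an element of $I_\lambda$ whose leading monomial under $\leq_{des}$ is exactly $g_\sigma(\xn)$, all other terms being strictly smaller; Allen's straightening algorithm \cite{AllenDescent} rewrites $g_\sigma$ modulo $I_\lambda$ in terms of descent-smaller $g_\tau$, and iterating expresses it in the claimed set. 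This is the analogue, for $I_\lambda$, of the fact that the $g_\sigma$ are the descent leading terms in $R_n$.

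The counting step is where I expect the real difficulty, as the authors anticipate: because a given $\an\in\desc_{\lambda'}$ may arise from several ordered set partitions, the naive product $\prod_i|\desc_{\lambda'_i}|$ badly overcounts, and one must show the overcount collapses to the multinomial $n!/\prod_i\lambda_i!$. I would attack this by selecting a canonical (say lexicographically minimal) ordered set partition representing each $\an\in\desc_{\lambda'}$ and building a bijection, or equivalently a sign-reversing involution on the non-canonical representations, that matches $\desc_{\lambda'}$ with a set manifestly of multinomial size. Refining the bookkeeping to track $\maj$ should identify the graded count $\sum_{\sigma\in\jmaj_{\lambda'}}t^{\maj(\sigma)}$ with the Hilbert series of $R_\lambda$, recovering $\frob_t R_\lambda=\Ht_\lambda[X;t]$; the specialization $t=1$ is what closes the basis argument. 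Proving that the representations organize into exactly multinomially many classes, without circularly invoking that $R_\lambda$ has that dimension, is the main obstacle.

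Finally, for the parabolic statement I would apply the $S_\mu$-antisymmetrizer $\sum_{w\in S_\mu}\mathrm{sgn}(w)\,w$ under the Springer action to the basis just obtained. Factoring each $\sigma\in\jmaj_{\lambda'}$ as $\sigma=u\,v$ with $v$ a reverse shuffle and $u\in S_\mu$, I would show that the antisymmetrizer carries $g_\sigma$ to $\pm$ the antisymmetrization of $g_v$ plus descent-smaller terms, so that the nonvanishing images are indexed by the maximal coset representatives, that is by $\jmaj_{\lambda'}\cap\shuff'(\mu)$. Triangularity with respect to $\leq_{des}$, together with the already-established basis of $R_\lambda$, then gives a basis of the antisymmetric part and the corresponding expansion of $\Ht_\lambda[X;t]$.
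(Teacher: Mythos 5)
Your overall skeleton --- a triangular spanning argument plus a cardinality count, with the parabolic case handled by observing that antisymmetrizing a reverse-shuffle monomial adds only descent-lower terms --- is a legitimate inversion of the paper's logic: the paper proves \emph{linear independence} first (Proposition \ref{prop:independence}, via the maps $\varphi_S$ of \eqref{eq:rtensmap} into $R_{\lambda_1}\otimes\C[x_1,\dots,x_{n-\lambda_1}]$, Lemma \ref{lem:tanimap}, and induction on $l(\lambda)$ using property \eqref{eq:desprop}), and then obtains spanning and the leading-term statement for free from the dimension count; your antisymmetrization step is essentially the paper's Corollary \ref{cor:youngind}. But your spanning half rests on an unproven and doubtful claim: that $\sigma\notin\jmaj_{\lambda'}$ is ``detected by a single Tanisaki relation'' whose descent-leading term is exactly $g_\sigma(\xn)$. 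Membership of $\majt(\sigma)$ in $\desc_{\lambda'}$ is the existence of an ordered set partition splitting it into descent compositions --- a global condition with non-unique representations --- and no equivalence with a single-subset exponent bound is offered or known. Producing, for each such $\sigma$, an element of $I_\lambda$ with prescribed descent-leading monomial is essentially the leading-term half of the theorem itself, not a lemma toward it; and since $\leq_{des}$ is not a monomial order, the Gr\"{o}bner-style moves this would require (multiplying relations by monomials while controlling leading terms) are exactly what fails, while Allen's algorithm is specific to $I_n$. The paper deliberately routes around all of this by never proving spanning directly.

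The decisive gap, which you flag yourself, is the counting step: you propose a canonical-representative or sign-reversing-involution argument on the overlapping set-partition representations, but supply no such construction, and the authors note this count is ``surprisingly far from obvious.'' (One clarification: invoking $\dim R_{\lambda}=n!/\prod_i\lambda_i!$ is not circular --- it is due to Garsia--Procesi, and the paper's Corollary \ref{cor:leq} uses it to get the upper bound $|\jmaj_{\lambda}|\leq\binom{n}{\lambda'}$ from independence; the hard direction is the matching lower bound.) The paper's missing idea, which your proposal has no substitute for, is the detour through the Compositional Shuffle Theorem: elements of $\desc_\lambda$ are produced from the $\dinv+\doff$-minimal ribbon tableaux $\ribtabs^0_\lambda$ characterized in Proposition \ref{prop:mindd} and Definition \ref{def:minrib}, whose cardinality is the multinomial coefficient \eqref{eq:ribbonmulti} by Proposition \ref{prop:ribbonhall}, and injectivity of $\Psi_\lambda$ (Proposition \ref{prop:injective}) is established by the explicit two-phase reconstruction algorithm (Algorithm 1$'$) justified by the forbidden-pattern Lemma \ref{lem:patterns}; the same refinement by $\shuff(\mu)$, $\shuff'(\mu)$ and $\maj=\area$ is what yields the parabolic dimension counts and Theorem B. As it stands, then, neither half of your proposal is complete: the spanning lemma is asserted rather than proven, and the count --- the crux --- is an acknowledged open obstacle rather than an argument.
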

The proof begins by defining a map from $R_{\lambda'}$ into a certain direct sum of tensor products of usual coinvariant algebras. 
We then use properties of the descent order to show that the image of $\raybas_{\lambda}$ is linearly independent, so that $\raybas_{\lambda}$ must be independent as well. We complete the proof by showing that $\raybas_{\lambda}$ has the desired size, by identifying it with a certain set of parking functions. We then confirm the size of that set by invoking one form of the compositional shuffle theorem \cite{haglund2005combinatoriala,carlsson2018proof}.
\begin{thmb}
The modified Hall-Littlewood polynomial is given by $\Ht_{\lambda}[X;t]=\Ct_{\lambda}[X;t]$, where
\begin{equation}
\label{eq:thmb}
    \Ct_{\lambda}[X;t]=
\sum_{\mu} \Big(
\sum_{\sigma \in \jmaj_\lambda \cap
\shuff'(\mu)}
t^{\maj(\sigma)} \Big) m_{\mu}(X).
\end{equation}
\end{thmb}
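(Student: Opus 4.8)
The plan is to deduce Theorem B as a corollary of Theorem A by extracting the graded Frobenius character of $R_\lambda$ from the monomial basis $\raybas_{\lambda}$ and matching coefficients of monomial symmetric functions. Since Theorem A identifies $\{g_\sigma(\xn):\sigma\in\jmaj_{\lambda'}\}$ as a basis of $R_{\lambda}$ whose graded character is $\Ht_\lambda[X;t]$, the whole content of Theorem B is a bookkeeping identity: the $m_\mu$-coefficient of $\Ht_\lambda[X;t]$ must equal the generating function $\sum_{\sigma\in\jmaj_\lambda\cap\shuff'(\mu)} t^{\maj(\sigma)}$. I would first recall that for any graded $S_n$-representation, the $m_\mu$-coefficient of its graded Frobenius image equals the graded dimension of the $\mu$-weight space for the diagonal torus, equivalently the graded multiplicity of the trivial isotypic component under the Young subgroup $S_\mu$ after a suitable twist. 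The task is then to realize this weight-space dimension combinatorially in terms of the descent basis.

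\emph{The main technical bridge} is the interplay between the Springer action on $R_\lambda$ and the descent monomials. I would use the parabolic refinement already asserted in Theorem A: intersecting $\jmaj_{\lambda'}$ (or, after the appropriate passage to $\jmaj_\lambda$) with the reverse-shuffle coset representatives $\shuff'(\mu)$ picks out exactly those basis elements that survive the relevant (anti)symmetrization with respect to $S_\mu$. The reverse shuffles $\shuff'(\mu)$ are the maximal-length representatives of $S_\mu\backslash S_n$, so a permutation $\sigma\in\shuff'(\mu)$ is precisely one compatible with reading off a single representative from each $S_\mu$-orbit; summing $t^{\maj(\sigma)}$ over these representatives computes the graded dimension of the corresponding weight space. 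I would make this precise by showing that the generating function $\sum_{\mu}\big(\sum_{\sigma\in\jmaj_\lambda\cap\shuff'(\mu)} t^{\maj(\sigma)}\big) m_\mu(X)$ is exactly the expansion of $\frob_t R_\lambda$ into the monomial basis, reading the $\mu$-coefficient as the major-index generating function over the basis monomials lying in the $\mu$-weight space.

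\emph{The hard part} is not the symmetric-function formalism but controlling the two indexing sets $\jmaj_{\lambda'}$ and $\jmaj_\lambda$ simultaneously, and verifying that the parabolic intersection in Theorem A translates cleanly into the weight-space decomposition. Concretely, I expect the subtle step to be the compatibility between the transpose $\lambda'$ appearing in the basis of $R_\lambda$ in Theorem A and the untransposed $\lambda$ appearing in formula \eqref{eq:thmb}; I would resolve this by tracking how the descent statistic and the Springer action transform under the passage from $R_{\lambda'}$ to $R_\lambda$, using that $\frob_t R_\lambda = \Ht_\lambda[X;t]$ is the established identity. Once the weight-space count is matched to $\sum_{\sigma\in\jmaj_\lambda\cap\shuff'(\mu)} t^{\maj(\sigma)}$ for each $\mu$, the equality $\Ht_\lambda[X;t]=\Ct_\lambda[X;t]$ follows by linear independence of the $m_\mu$. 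Finally I would remark that this gives a genuinely new combinatorial formula, and contrast it with the $q=0$ specialization of the Haglund–Haiman–Loehr formula, noting as in the abstract that no direct bijective match is evident.
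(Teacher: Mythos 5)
Your proposal takes a genuinely different route from the paper, and as a \emph{conditional} deduction it is essentially workable: granting Theorem A (including its parabolic part, Theorem \ref{thm:majbasis}(2)) together with the classical Springer/Garsia--Procesi fact that $\frob_t R_{\lambda'}=\Ht_{\lambda'}[X;t]$, one reads off $\hs_t(\antisym_\mu R_{\lambda'})=\sum_{\sigma\in\jmaj_\lambda\cap\shuff'(\mu)}t^{\maj(\sigma)}$ from the basis $\raybas_{\lambda,\mu}$, and the identity $\omega\frob_t(V)\big|_{m_\mu}=\hs_t(\antisym_\mu V)$ then gives the theorem by matching $m_\mu$-coefficients. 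The paper does something quite different: it never extracts Theorem B from the module structure at all. It first proves Proposition \ref{prop:ribbonhall}, the lowest-$q$-degree specialization of the compositional shuffle theorem, which expresses the $m_\mu$-coefficients of $\Ht_{\lambda'}[X;t]$ and $\omega\Ht_{\lambda'}[X;t]$ as sums of $t^{\area}$ over minimal ribbon tableaux $\ribtabs^0_\lambda$ with reading word in $\shuff(\mu)$, resp.\ $\shuff'(\mu)$; it then constructs the statistic-preserving bijection $\Psi_\lambda:\ribtabs^0_\lambda\to\desc_\lambda$ (Proposition \ref{prop:injective}, via Algorithm 1$'$ and Lemma \ref{lem:patterns}), under which area becomes maj and reading words are preserved. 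Theorem B drops out of that bijection directly, with no representation theory.

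There are two concrete problems with your plan. First, circularity: in the paper the counting identity you dismiss as ``bookkeeping'' is an \emph{input} to Theorem A, not an output. Linear independence (Corollaries \ref{cor:youngind} and \ref{cor:leq}) only yields the inequality $|\jmaj_\lambda|\leq\binom{n}{\lambda'}$ and its parabolic analogue; the matching upper bound --- that the graded count $\sum_{\sigma\in\jmaj_\lambda\cap\shuff'(\mu)}t^{\maj(\sigma)}$ equals the Hall--Littlewood coefficient --- is exactly the content of Theorem B, and it is established via $\Psi_\lambda$ and Proposition \ref{prop:ribbonhall} \emph{before} the basis statement can be concluded. So deducing B from A cannot stand as an independent proof: all of the substantive work (the injection $\Psi_\lambda$, or some substitute for the shuffle-theorem count) is absent from your sketch. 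Second, the Frobenius bookkeeping is garbled where it matters: there is no ``diagonal torus'' acting on an $S_n$-module; the correct statements are $\frob(V)\big|_{m_\mu}=\dim V^{S_\mu}$ and, for the antisymmetrized version relevant to $\shuff'(\mu)$, $\omega\frob(V)\big|_{m_\mu}=\dim \antisym_\mu V$. Carrying that $\omega$ through shows your method produces $\omega\Ht_{\lambda'}[X;t]=\Ct_\lambda[X;t]$ --- the second line of Corollary \ref{thm:hl} --- and this is the only correct resolution of the $\lambda$-versus-$\lambda'$ discrepancy you flag but defer: $\omega\Ht_{\lambda'}\neq\Ht_\lambda$ in general (already for $\lambda=(1,1)$, where $\omega\Ht_{(2)}=m_{1,1}$ while $\Ht_{(1,1)}=m_2+(1+t)m_{1,1}$), so the unresolved twist is not a loose end but the exact form of the statement.
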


\subsection{Relationship with the 
full Macdonald formula}

The motivation for this paper had to do with extending the methods of Oblomkov and the first author \cite{carlsson2018affine}
to search for a monomial basis for the Garsia-Haiman module, a long standing open problem \cite{garsia1999explicit,assaf2009kicking}. We hope this will also lead to connections with Springer varieties 
and their affine analogs.

In this direction, it is natural to ask if \eqref{eq:thmb}
can be realized as the
special case of the full Macdonald formula \cite{haglund2005combinatorial}
corresponding to setting $q=0$,
which was specifically addressed
in that paper. We were 
not able to find a reasonable bijection between the corresponding diagrams
in the two formulas.
Our closest version involved a complicated sequence involving the 
version of the RSK algorithm given
in \cite{haglund2005combinatorial},
followed by switching the resulting
two tableaux, followed by an application
of the usual RSK algorithm. Instead,
we used a different specialization
arising from taking lowest degree terms in $q$ from the Compositional Shuffle Theorem, given in terms of certain Ribbon-shaped tableaux in Proposition 
\ref{prop:ribbonhall}. Thus, any 
formula recovering \eqref{eq:thmb}
would amount to a bijection between 
those two known specializations of previously known formulas. 
We expect that finding such a bijection may also be useful in the effort to extend this basis to the full Garsia-Haiman modules.

\subsection{Acknowledgments}

Both authors were supported by NSF DMS-1802371 during this
this project, which they gratefully acknowledge. We thank A. Mellit for valuable conversations, 
particularly for suggesting to look at the ``$\dinv+\doff$''
version of the 
Compositional Shuffle Theorem.

\section{Preliminaries}

We review our preliminary notations and background on parking function statistics, symmetric functions,
and the Garsia-Procesi modules.

\subsection{Combinatorial notations}

Let $\lambda=(\lambda_1,...,\lambda_l)$ denote a partition of $n$ of length $l$,
written $\lambda \vdash n$ and $l(\lambda)=l$. 
Its Young diagram will be written in English notation, meaning boxes the rows are drawn top to bottom.
For instance, $\lambda=(3,3,2,1)$
would be represented by

\begin{center}
\begin{ytableau}
$\ $ & & \\
  & & \\
   & \\
 \\
\end{ytableau}
\end{center}
We will denote the length by $l=l(\lambda)$, and use $\lambda'$ to denote the conjugate partition. The length of the conjugate partition 
$l(\lambda')$ will be denoted $h=\lambda_1$. We will make use of the statistics from Macdonald's book given by
\[n(\lambda)=\sum_{i=1}^h
\binom{\lambda'_i}{2}.\]
More generally, we will also use the letters $\lambda,\mu,\nu$
to denote strong compositions
of $n$, which are not necessarily sorted.

An ordered set partition is a collection of $l$ subsets written $(A_1|\cdots |A_l)$ which are a partition of 
$[n]=\{1,...,n\}$. The sizes of the sets
$A_i$ determine a composition, which is called the type of the partition.
We denote the set of all ordered set partitions
of type $\lambda$ will be
denoted by $\osp(\lambda)$.

Let $S_n$ denote the symmetric group on $n$ elements. If $\mu=(\mu_1,...,\mu_l)$
is a composition, we have the Young subgroup
\[S_\mu=S_{\mu_1}\times \cdots \times S_{\mu_l} \subset S_n.\]
We have both the left and right cosets
$S_\mu \backslash S_n$ and $S_n/S_\mu$,
and both cosets have unique minimal and maximal representatives in the Bruhat order. A minimal (resp. maximal) representative of $S_\mu\backslash S_n$ is called a $\mu$-shuffle (resp. reverse shuffle) denoted $\shuff(\mu)$ and $\shuff'(\mu)$ respectively.
The elements of $\shuff(\mu)$ are permutations with the property that the elements of $A_i$ appear in sorted order.
for instance,
\[\shuff((3,2))=
\left\{ (1,2,3,4,5),(1,2,4,3,5),(1,2,4,5,3),(1,4,2,3,5),(1,4,2,5,3),\right.\]
\[\left.(1,4,5,2,3),(4,1,2,3,5),(4,1,2,5,3),(4,1,5,2,3),(4,5,1,2,3) \right\} \]
whereas $\shuff'((3,2))$ would have the elements $\{1,2,3\}$ and $\{4,5\}$ appear in decreasing order. The elements of $
\shuff(\mu),\shuff'(\mu)$ are in bijection with the set of ordered set partitions of type $\mu$.
See \cite{haglund2008catalan} for more details.

\subsection{Symmetric functions}

We will use capital letter 
$X$ to denote an alphabet in infinitely many variables $x_1,x_2,...$.
Given a field $F$, let $\Lambda_F=\Lambda_F(X)$ be the graded
algebra of symmetric polynomials in $X$
over $F$. We will denote $\Lambda=\Lambda_\Q$, and also
$\Lambda_{t}=\Lambda_{\Q(t)}$ when the subscript is a variable, and similarly for several variables. 
A superscript $\Lambda^{(n)}$ will represent those symmetric functions of degree $n$.
We have the usual bases
$s_\mu,e_\mu,h_\mu,m_\mu,p_\mu$ of 
$\Lambda^{(n)}$ consisting of the
Schur, elementary, complete, monomial,
and power sum
symmetric functions as
$\mu$ ranges over partitions of $n$,
see \cite{macdonald1995symmetric}.

If $f\in \Lambda$ is a symmetric function,
and $A$ is an expression in some variables
the plethystic substitution $f[A]$,
see \cite{Haiman01vanishingtheorems}.
It is the image of $f$ under the homomorphism defined on the power sum generators by $p_k \mapsto A^{(k)}$,
where $A^{(k)}$ is the result of evaluating $x=x^k$ for every variable $x$ appearing in $A$. This applies to 
symmetric function variables $x_i$ as 
well, and in fact we may write
\[f[x_1+x_2+\cdots]=f(x_1,x_2,...,).\]
For this reason, we may also write
$f[X]$ using $X$ to denote the set of variables, as well as the sum $x_1+x_2+\cdots$. In this way we can obtain expressions such as
\[h_2[X(1-q)^{-1}]=
%\frac{1}{2}\left(p_2+p_1^2\right)[X(1-q)^{-%1}]=\]
%\[\frac{1}{2}\left( %\frac{1}{1-q^2}p_2+\frac{1}{(1-q)^2}
%p_1^2\right)=
\frac{1}{(1-q)(1-q^2)} m_{2}+\frac{1}{(1-q)^2} m_{1,1}.\]

If $V$ is a representation of $S_n$, we have its Frobenius character denoted
\begin{equation}
    \label{eq:frobdef}
\frob (V)=\sum_{\mu \vdash n}    
a_{\mu} s_\mu=
\sum_{\mu} b_\mu m_\mu.
\end{equation}
where $a_\mu$ is the multiplicity of the irreducible representation $\chi_\mu$ in $V$. The coefficients $b_\mu$ represent the dimension of the invariants of $V$ under the Young subgroup 
$b_\mu= \dim(V^{S_\mu})$.
If $V$ is graded, we also have the graded Frobenius character
\[\frob_{q}(V)=\sum_{i\geq 0} q^i 
\frob(V^{(i)})\]
and similarly with more variables and multiple gradings. We have other relationships with the plethystic operations. For instance, $\omega \frob(V)$
is the Frobenius charcter of the twist of
$V$ with the sign representation.

Given $\lambda \vdash n$,
the modified Macdonald polynomials
are defined by 
\[\Ht_\lambda[X;q,t]=t^{n(\lambda)}
J_\lambda[X(1-t^{-1})^{-1};q,t^{-1}]\]
where $J_\lambda[X;q,t]$ is the integral form of the Macdonald polynomial.
Haiman showed that they are the Frobenius character of the $n!$ dimensional
Garsia-Haiman module \cite{haiman2001hilbert}, and so have an
expansion which is positive in the Schur
basis. Moreover, the constant term in the monomial basis is a polynomial in $q,t$
with coefficients summing to $n!$.
The modified Hall-Littlewood polynomials 
are the specialization of the modified Macdonald polynomials at $q=0$, which
will be written $\Ht_\lambda[X;t]=
\Ht[X;q,t]$. For instance,

It follows from the axioms defining $J_\lambda[X;q,t]$ that the Hall-Littlewood polynomials are uniquely determined by
\begin{equation} 
\label{eq:HL}
\begin{split}
(T1) & \ \  \omega \Ht_{\lambda} [X;t] = \sum_{\mu \trianglerighteq \lambda'} a_{\lambda',\mu}(t)m_{\mu} \\
(T2) & \ \ \Ht_{\lambda} [(t-1)X;t] =
\sum_{\mu \trianglelefteq \lambda} b_{\lambda,\mu}(t)m_{\mu}\\
(T3) & \ \
a_{\lambda',\lambda}(t)=t^{n(\lambda)}
\end{split}
\end{equation}
For instance,
$\Ht_{3,1}[X;t]=ts_{3,1}+s_{4}$ so that
\[\omega \Ht[X;t] =tm_{{2,1,1}}+ \left( 3t+1 \right) m_{{1,1,1,1}}\]
and
\[\omega \Ht[X(1-t);t] =
{t}^{3} \left( -1+t \right) ^{2}m_{{3,1}}+{t}^{2} \left( -1+t \right) ^
{3}m_{{2,2}}+\]
%\vspace{.01cm}
\[2{t}^{2} \left( -1+t \right) ^{3}m_{{2,1,1}}+ \left( 3
t+1 \right)  \left( -1+t \right) ^{4}m_{{1,1,1,1}}.\]

\subsection{Permutation statistics}

For more on this section we refer to \cite{haglund2008catalan,stanley2011enumerative}.

For each $\sigma \in S_n$, we have the
inversion and descent set
\[\Inv(\sigma)=\left\{(\sigma_i,\sigma_j):i<j,\ \sigma_i>\sigma_j\right\},\quad \Des(\sigma)=\left\{1\leq i\leq n-1 : \sigma_i>\sigma_{i+1}\right\}.\]
For instance, for $\sigma=(3,5,1,2,4)$ we have
\[\Inv(\sigma)=\left\{(3,1),(3,2),(5,1)(5,2),(5,4)\right\},\quad \Des(\sigma)=\{2\}.\]
Note our definition for Inv differs from the literature by recording $(\sigma_i,\sigma_j)$ in place of $(i,j)$. This is done so to drastically simplify notation. An entry $i$ is called an \emph{inverse descent} of $\sigma$ if $i+1$ appears to the left of $i$ in one-line notation, or equivalently if
$i$ is a descent of $\sigma^{-1}$. We define $\text{iDes}(\sigma)$ to be the set of inverse descents of $\sigma$.

The inversion and major index statistics are defined by
\begin{equation}\label{eq:invmaj} \inv(\sigma)= \#\Inv(\sigma),\quad
    \maj(\sigma)=\sum_{i \in \Des(\sigma)} i.
\end{equation}
For instance, we have
\[\inv(\sigma)=5,\quad \maj(\sigma)=2\]
for $\sigma=(3,5,1,2,4)$ as above.
The inv and maj statistics are \emph{Mahonian}, meaning that
\begin{equation}
    \label{eq:mahonian}
\sum_{\sigma\in S_n} q^{\inv(\sigma)}=
\sum_{\sigma \in S_n} q^{\maj(\sigma)}=[n]_q!
\end{equation}
where $[n]_q!$ is the $q$-factorial
\[[n]_q!=[n]_q\cdots [1]_q,\quad
[k]_q=1+\cdots +q^{k-1}.\]

We also have the inversion and major index tables, whose total sum are the inversion number and major index respectively.
First we define $\invt(\sigma)=(a_1,...,a_n)$, where $a_j$ is the number of indices $j$ for which 
$(i,j) \in \Inv(\sigma)$. For the major
index table, define
a \emph{run} of $\sigma$ to be a maximal increasing sequence $(\sigma_i,...,\sigma_{i+l})$. 
We may decompose
$\sigma$ so that each $\sigma_i$ is in exactly one
run. We let $\majt(\sigma)=(b_1,...,b_n)$ so that if If $\sigma$ has $r$ runs, then define $b_i = r-k$ if $i$ appears in the $k$th run of $\sigma$. 
For instance, for $\sigma=(3,4,1,5,2)$ we would have
\[\invt(\sigma)=(2,3,0,0,0),\quad
\majt(\sigma)=(1,0,2,2,1).\]

We will denote the sets of all inversion
tables and major index tables by
$\invtabs_n$ and $\majtabs_n$ respectively, and call the elements of the latter \emph{descent compositions}.
For instance, we have
\[\majtabs_3=\left\{(0,0,0),(0,0,1),(0,1,0),(0,1,1),(1,0,1),(0,1,2)\right\}.\]
These represent the exponents of the Artin and Garsia-Stanton descent monomials, defined
in Section \ref{subsec:coinvariants}.

\subsection{Shuffle Theorem combinatorics}

\label{sec:parking}

We recall some definitions from \cite{haglund2008catalan} as well as \cite{haglund2005combinatoriala}, and deduce
a combinatorial corollary of the compositional Shuffle Theorem proved in \cite{carlsson2018proof}.
The main result we will need later is the statement of Proposition \ref{prop:ribbonhall}
at the end of the section. The reader who is willing to accept that statement may 
safely skip the other statements.

A Dyck path $\pi$ of length $n$ is a Northeastern lattice path from $(0,0)$ to $(n,n)$ that does not pass under the line $y = x$. The area sequence of $\pi$ is the $n$-tuple $\an = (a_1,\dots,a_n) \in \mathbb{Z}_{\geq 0}^n$, where $a_i$ is the number of complete boxes in the $i$th row, starting from $(0,0)$, between $\pi$ and the diagonal. Notice that $\pi$ uniquely determines an area sequence, and vice versa.
A parking function $P$ is a pair $(\an,\sigma)$ consisting of a area sequence $\an \in \mathbb{Z}_{\geq 0}^n$, together with a permutation $\sigma \in S_n$ with the property that $a_i +1 = a_{i+1} \implies \sigma_i < \sigma_{i+1}$. The set of parking functions is denoted by $\PF_n$. The area of a parking function is the number of boxes between the diagonal and the Dyck path, which is the same as the norm of the area sequence $|\an|$.
We denote the underlying Dyck path of 
$P$ by $\dyckpath(P)$.

Since there are no duplicate entries
of $\sigma$, we may 
refer to an entry of a parking function $P$ by the number $\sigma_i$ occupying it without ambiguity, as opposed to 
the row index. We will write $\lev(\sigma_j) = a_j$, the number of steps the box containing $\sigma_i$ is above the diagonal, starting with zero.
A \emph{dinv pair} of $P$ is a pair $(\sigma_i,\sigma_j)$ with $i < j$ satisfying either:
\begin{enumerate}
    \item $\sigma_i < \sigma_j, \quad a_i = a_i$, or
    \item $\sigma_i > \sigma_j, \quad a_i = a_j + 1$
\end{enumerate}
The total number of dinv pairs is denoted
$\dinv(P)$, whereas the collection of dinv pairs will be written in
bold $\dinvn(P)$. This is slightly different from the usual collection 
$\Dinv(P)$, which consists of the pairs of indices $(i,j)$ rather than the
labels $(\sigma_i,\sigma_j)$.

\begin{example}
  The parking function
\\
\begin{tikzpicture}[scale=.5]\draw[dotted] (0,0)--(9,9);\draw[black] (0,0)--(0,9);\draw[black] (0,0)--(9,0);\draw[black] (0,9)--(9,9);\draw[black] (9,0)--(9,9);\draw[black] (0,0)--(0,1);\node[scale=1] (2) at (0.500000,0.500000) {2};\draw[black] (0,1)--(1,1);\draw[black] (1,1)--(1,2);\node[scale=1] (4) at (1.500000,1.500000) {4};\draw[black] (1,2)--(1,3);\node[scale=1] (7) at (1.500000,2.500000) {7};\draw[black] (1,3)--(3,3);\draw[black] (3,3)--(3,4);\node[scale=1] (9) at (3.500000,3.500000) {9};\draw[black] (3,4)--(4,4);\draw[black] (4,4)--(4,5);\node[scale=1] (1) at (4.500000,4.500000) {1};\draw[black] (4,5)--(4,6);\node[scale=1] (5) at (4.500000,5.500000) {5};\draw[black] (4,6)--(4,7);\node[scale=1] (8) at (4.500000,6.500000) {8};\draw[black] (4,7)--(5,7);\draw[black] (5,7)--(5,8);\node[scale=1] (3) at (5.500000,7.500000) {3};\draw[black] (5,8)--(5,9);\node[scale=1] (6) at (5.500000,8.500000) {6};\end{tikzpicture}
\\
is given by the pair
$P=((0, 0, 1, 0, 0, 1,2,2,3), (2,4,7,9,1,5,8,3,6))$,
so $\area(P)=9$.  
We also have $\dinvn(P) = 
\{ (2, 4), (2, 9), (4, 9), (7, 1)) \}$, so that $\dinv(P) = 4$.

\end{example}

The (non-compositional) 
Shuffle Theorem \cite{haglund2005combinatorial} states that
\begin{thm}[The Shuffle Theorem \cite{carlsson2018proof}] We have
\[\nabla e_n \big|_{m_\mu}=
\sum_{P:\readword(P)\in \shuff(\mu)} t^{\area(P)} q^{\dinv(P)}\]
\end{thm}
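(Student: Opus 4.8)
The plan is to deduce this from the finer \emph{compositional} Shuffle Theorem together with the machinery of the Dyck path algebra, following the strategy of \cite{carlsson2018proof}. Rather than attacking $\nabla e_n$ directly, I would first refine the right-hand side by the touch composition of the path: for a composition $\alpha$ of $n$, let $F_\alpha$ collect $t^{\area(P)}q^{\dinv(P)}$ over parking functions $P$ whose underlying Dyck path touches the main diagonal according to $\alpha$, with the labels (and hence $\readword(P)$) recording the monomial expansion. Since $\sum_\alpha C_\alpha\cdot 1 = e_n$ for the Haglund--Morse--Zabrocki operators $C_\alpha$, summing $F_\alpha$ over all $\alpha$ recovers the full parking-function sum, while the coefficient of $m_\mu$ is cut out exactly by the given condition $\readword(P)\in\shuff(\mu)$. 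It therefore suffices to prove the operator identity $F_\alpha = \nabla C_\alpha \cdot 1$ for each fixed $\alpha$.

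The engine is an algebra $\mathbb{A}_{q,t}$ acting on the tower of spaces $V_k = \Lambda_{\Q(q,t)}\otimes \Q(q,t)[y_1,\dots,y_k]$, generated by a raising operator $d_+ : V_k \to V_{k+1}$ and a lowering operator $d_- : V_{k+1}\to V_k$ (each carrying a built-in plethystic shift), together with Demazure--Lusztig (Hecke) operators $T_i$ permuting the auxiliary variables $y_i$. First I would establish the defining relations among these operators on the polynomial representation --- a self-contained if intricate verification, since a naive definition need not be consistent. The combinatorial content then enters through a recursion: peeling the bottom row off a Dyck path, equivalently adding one car at a time while tracking its level and the dinv it creates, translates into left-multiplication by a specific word in $d_+, d_-, T_i$. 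The key lemma is thus that $F_\alpha$ equals the value at $1\in V_0 = \Lambda_{\Q(q,t)}$ of such a word, proved by induction on $n$ by matching the bijective decomposition of parking functions with the action of the generators.

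On the symmetric-function side I would show that $\nabla$ conjugates the generators in a controlled way, so that both $\nabla$ (a Macdonald eigenoperator) and the compositional operators $C_\alpha$ can be assembled from the same $d_\pm, T_i$ inside $\mathbb{A}_{q,t}$. The relations established above then let me rewrite $\nabla C_\alpha\cdot 1$ as precisely the operator word appearing in the combinatorial lemma. Matching the two expressions for this common word, together with the base case $n=1$ and the degenerate compositions, completes the induction and yields $F_\alpha = \nabla C_\alpha \cdot 1$; summing over $\alpha$ and extracting $m_\mu$ gives the stated formula.

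The main obstacle is twofold, and both parts are genuinely hard. First, proving that $d_\pm, T_i$ satisfy the relations of $\mathbb{A}_{q,t}$ demands delicate computations in the polynomial representation. Second --- and this is where I expect the real difficulty --- is the combinatorial recursion: the claim that adding a car corresponds cleanly to one generator hinges on the precise interaction between the $\area$ and $\dinv$ statistics and the plethystic shift inside $d_+$, and forcing the $q,t$ bookkeeping to align with the Hecke relations is the crux of the entire argument.
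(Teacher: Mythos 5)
The paper does not prove this statement at all---it is quoted as known from \cite{carlsson2018proof}---and your outline (refining by touch composition via the Haglund--Morse--Zabrocki identity $e_n=\sum_{\alpha}C_\alpha\cdot 1$, then establishing $\nabla C_\alpha\cdot 1=F_\alpha$ by building both sides out of the generators $d_+,d_-,T_i$ of the Dyck path algebra $\mathbb{A}_{q,t}$ acting on the tower $V_k=\Lambda_{\Q(q,t)}\otimes\Q(q,t)[y_1,\dots,y_k]$, with the car-by-car recursion matching operator words) is a faithful summary of precisely the Carlsson--Mellit argument that the paper invokes. So your proposal is correct as a plan and takes essentially the same route the paper relies on, with the genuinely hard verifications (the relations in the polynomial representation and the $\area$/$\dinv$ bookkeeping against the plethystic shifts) correctly identified rather than elided.
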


On the left hand side, we have that taking the coefficient 
of $m_{1^n}$ at either $q=0$ or $t=0$ results in
$[n]_t!$ or $[n]_q!$ respectively. On the right hand side, it is clear that there are $n!$ parking functions with $\area(P)=0$,
and summing $q^{\dinv(P)}$ is equivalent to the sum over inversions in \eqref{eq:mahonian}.
The other side is less clear: denote the set of parking
functions with $\dinv(P)=0$ by $\PF_n^0$. Then the following is not difficult to show:
\begin{lem}
\label{lem:dinvzero}
A parking function $P$ is in $\PF_n^0$ if and only if
\begin{enumerate}
    \item \label{lem:item:consecutive} The area sequence is non-strictly increasing. In other words, the only consecutive East steps are in the top row. 
\item  We have $\sigma_i>\sigma_{j}$ whenever $i<j$ and $a_i=a_j$.
\end{enumerate}
\end{lem}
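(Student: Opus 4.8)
The plan is to characterize $\PF_n^0$ directly from the definition of a dinv pair, proving each direction of the equivalence by showing that a violation of one of the two listed conditions produces a dinv pair, and conversely that satisfying both conditions forces $\dinv(P) = 0$. Throughout I will work with the area sequence $\an = (a_1, \dots, a_n)$ and the labeling permutation $\sigma$, and I will use the parking condition $a_i + 1 = a_{i+1} \implies \sigma_i < \sigma_{i+1}$.

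First I would establish item \ref{lem:item:consecutive}. I claim the area sequence is non-strictly increasing precisely when there are no ``bad drops.'' Suppose to the contrary that $a_{i+1} < a_i$ for some $i$; then $a_{i+1} \leq a_i$, and I want to produce a dinv pair. The cleanest approach is to consider rows $i$ and $i+1$ directly: since consecutive rows of a Dyck path differ in area by at most one going up (but may drop arbitrarily), a drop $a_{i+1} \le a_i$ means $a_{i+1} = a_i$ or $a_{i+1} < a_i$. If $a_{i+1} = a_i$ then either $\sigma_i < \sigma_{i+1}$ (giving a type-(1) dinv pair) or $\sigma_i > \sigma_{i+1}$ (which, combined with condition 2 below, is consistent, so I must be careful—this case feeds into item 2 rather than item 1). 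So the content of item \ref{lem:item:consecutive} is really about the \emph{strict} drops $a_{i+1} < a_i$: such a drop means East steps occur somewhere other than the top, and I would show any strict drop creates a dinv pair by comparing the dropping row with an earlier row at the same level. Concretely, if $a_{i+1} = a_j + 1$ for some earlier row, a type-(2) pair appears; tracking the levels forces such a coincidence to exist, yielding $\dinv(P) > 0$.

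Next I would handle item 2 under the standing assumption that the area sequence is non-strictly increasing. Given two rows $i < j$ with $a_i = a_j$ at the same level, the type-(1) dinv condition says $(\sigma_i, \sigma_j)$ is a dinv pair whenever $\sigma_i < \sigma_j$. Hence $\dinv(P) = 0$ forces $\sigma_i > \sigma_j$ for all such same-level pairs, which is exactly condition 2. Conversely, I would check that if both listed conditions hold then \emph{no} dinv pair of either type can occur: a type-(1) pair needs $a_i = a_j$ with $\sigma_i < \sigma_j$, excluded by condition 2; a type-(2) pair needs $a_i = a_j + 1$ with $i < j$, but since the sequence is non-decreasing, $i < j$ forces $a_i \le a_j$, contradicting $a_i = a_j + 1$. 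This last observation is the crux and makes the reverse direction essentially immediate once item \ref{lem:item:consecutive} is in hand.

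The main obstacle I anticipate is the precise bookkeeping in the forward direction of item \ref{lem:item:consecutive}: translating a strict descent in the area sequence into an explicit dinv pair requires locating an earlier row whose level is exactly one below the current one, and arguing such a row must exist. The resolution is that on a Dyck path the level can only increase by steps of one as the row index grows within an ascent, so before reaching level $a_i$ the path passes through every intermediate level, guaranteeing a prior row at level $a_{i+1} - 1$ or at level $a_{i+1}$ to pair against; one then picks the pairing matching a type-(1) or type-(2) condition. Once this level-accounting is made rigorous, the remaining steps are routine case analysis, and I do not expect any further difficulty.
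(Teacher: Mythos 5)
Your reverse direction and your derivation of condition 2 are correct and complete: weak increase of the area sequence rules out type-(2) pairs outright (an earlier index would need the strictly larger level), and condition 2 is exactly the negation of the type-(1) condition. The genuine gap is in the forward direction of item \ref{lem:item:consecutive}, precisely where you deferred the ``bookkeeping.'' First, you have the type-(2) geometry inverted: a dinv pair $(\sigma_j,\sigma_{i+1})$ with $j<i+1$ of type (2) requires $a_j = a_{i+1}+1$, i.e.\ the \emph{earlier} row one level \emph{above} the drop row; so a prior row at level $a_{i+1}-1$, which your sketch proposes to pair against, yields no dinv pair at all --- the relevant prior levels are $a_{i+1}$ and $a_{i+1}+1$. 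Second, and more seriously, locating a prior row at the right level is not enough: a prior row at level $a_{i+1}$ gives a type-(1) pair only if its label is \emph{smaller} than $\sigma_{i+1}$, and a prior row at level $a_{i+1}+1$ gives a type-(2) pair only if its label is \emph{larger}, and neither inequality is automatic. Your phrase ``one then picks the pairing matching a type-(1) or type-(2) condition'' assumes one of them holds, and this cannot follow from level-accounting alone, because the claim is false without the parking condition: for $\an=(0,1,0)$ and $\sigma=(3,1,2)$ every pair fails both dinv conditions even though the area sequence strictly drops; this labeled path is excluded only because $a_1+1=a_2$ with $\sigma_1>\sigma_2$ violates the parking condition, which your argument never invokes.

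The missing idea is to use the parking condition at the last ascent below the drop. Suppose $a_{i+1}<a_i$ and let $m<i+1$ be the largest index with $a_m\le a_{i+1}$ (it exists since $a_1=0$, and $m\neq i$). Since levels rise by at most one, $a_m=a_{i+1}$ and $a_{m+1}=a_{i+1}+1$, so the parking condition forces $\sigma_m<\sigma_{m+1}$. Now if $\sigma_m<\sigma_{i+1}$, then $(\sigma_m,\sigma_{i+1})$ is a type-(1) pair; otherwise $\sigma_{m+1}>\sigma_m>\sigma_{i+1}$ and $(\sigma_{m+1},\sigma_{i+1})$ is a type-(2) pair (note $m+1\le i<i+1$). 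With this dichotomy inserted your outline closes; for reference, the paper states this lemma without proof, so there is no argument of record to compare against.
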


In fact the map $S_n\rightarrow \desc_n$ sending a permutation to its descent composition factors through a
bijection $S_n\leftrightarrow \PF_n^{0}$ which associates $P$ to its reading word, and which carries the major index to the area, for instance
\\
$(3, 1, 7, 5, 4, 2, 6)\rightarrow $
\begin{tikzpicture}[scale=.5]\draw[dotted] (0,0)--(7,7);\draw[black] (0,0)--(0,7);\draw[black] (0,0)--(7,0);\draw[black] (0,7)--(7,7);\draw[black] (7,0)--(7,7);\draw[black] (0,0)--(0,1);\node[scale=1] (6) at (0.500000,0.500000) {6};\draw[black] (0,1)--(1,1);\draw[black] (1,1)--(1,2);\node[scale=1] (2) at (1.500000,1.500000) {2};\draw[black] (1,2)--(1,3);\node[scale=1] (4) at (1.500000,2.500000) {4};\draw[black] (1,3)--(1,4);\node[scale=1] (5) at (1.500000,3.500000) {5};\draw[black] (1,4)--(1,5);\node[scale=1] (7) at (1.500000,4.500000) {7};\draw[black] (1,5)--(2,5);\draw[black] (2,5)--(2,6);\node[scale=1] (1) at (2.500000,5.500000) {1};\draw[black] (2,6)--(2,7);\node[scale=1] (3) at (2.500000,6.500000) {3};\end{tikzpicture}
$\rightarrow (3, 0, 4, 1, 2, 0, 3)$
\\
The first map reads the elements of $\sigma$ starting with the last entry in the $(0,0)$
position, and then adds the remaining entries in reverse order moving Northeast for descents, and North for ascents.
The second map forms a descent composition 
$\an \in \desc_n$ so that $a_i$ is the element of 
the area sequence in the entry containing $\sigma_i$. 
The main nontrivial step in showing that this determines a bijection between $S_n$ and
$\PF^0_n$ is to check that 
every $P\in \PF_n^0$ satisfies
part \ref{lem:item:consecutive} of the Lemma.

Following \cite{haglund2005combinatoriala},
given a composition $\alpha \vDash n$, we can consider the parking functions $P = (\an,\sigma)$ with the property that $a_1= a_{\alpha_1+1}=\dots=a_{n-\alpha_k+1} = 0$, i.e. the path must ``touch'' the diagonal at least at $\alpha_1, \alpha_1+\alpha_2$, etc., to be denoted $\alpha$-\emph{parking functions}, the set of which is denoted $\PF_\alpha$. Each $P\in \PF_{\alpha}$
correspond to $l$ individual parking functions
of size $\alpha_i$ called the ``blocks'' of $P$, provided we allow the labels to live outside of $\{1,...,\alpha_i\}$.
Define $r_k = |\{i:a_i = 0 \text{ and } \sum_{j=1}^{k-1}\alpha_j < i < \sum_{j = 1}^k \alpha_j|$, and set

$$ \doff_\alpha(P) = \sum_{k = 1}^{\ell(\alpha)} (\ell(\alpha) - k)r_k$$

\begin{example}
\label{ex:doff}
Consider the parking function
$P\in \PF_{1,2,6}$ given by

\begin{tikzpicture}
    \begin{tikzpicture}[scale=.5]\draw[dotted] (0,0)--(9,9);\draw[black] (0,0)--(0,9);\draw[black] (0,0)--(9,0);\draw[black] (0,9)--(9,9);\draw[black] (9,0)--(9,9);\draw[black] (0,0)--(0,1);\node[scale=1] (2) at (0.500000,0.500000) {2};\draw[black] (0,1)--(1,1);\draw[black] (1,1)--(1,2);\node[scale=1] (4) at (1.500000,1.500000) {4};\draw[black] (1,2)--(1,3);\node[scale=1] (7) at (1.500000,2.500000) {7};\draw[black] (1,3)--(3,3);\draw[black] (3,3)--(3,4);\node[scale=1] (9) at (3.500000,3.500000) {9};\draw[black] (3,4)--(4,4);\draw[black] (4,4)--(4,5);\node[scale=1] (1) at (4.500000,4.500000) {1};\draw[black] (4,5)--(4,6);\node[scale=1] (5) at (4.500000,5.500000) {5};\draw[black] (4,6)--(4,7);\node[scale=1] (8) at (4.500000,6.500000) {8};\draw[black] (4,7)--(5,7);\draw[black] (5,7)--(5,8);\node[scale=1] (3) at (5.500000,7.500000) {3};\draw[black] (5,8)--(5,9);\node[scale=1] (6) at (5.500000,8.500000) {6};\end{tikzpicture}
\end{tikzpicture}

\noindent
Then we have $\dinv(P)=4$, and
 $\doff_{(1,2,6)}(\dyckpath(P)) = 
1\cdot 2+1\cdot 1+2\cdot 0=3$.

\end{example}

Then one equivalent form of the compositional shuffle theorem
(see \cite{haglund2005combinatoriala}) states

\begin{thm}[Compositional Shuffle Theorem \cite{carlsson2018proof}]
We have that
\begin{equation}\label{eq:cmpshuff}
\nabla B_{\alpha} \big|_{m_\mu}=
\sum_{P\in \PF_{\alpha}:\ \readword(P)\in \shuff(\mu)} t^{\area(P)} q^{\dinv(P)+\doff_{\alpha}(\dyckpath(P))}
\end{equation}
where $B_\alpha$ is a certain symmetric function
associated to a composition $\alpha$. When
$\alpha=\revla=(\lambda_l,...,\lambda_1)$,
is a partition in reverse order, we have
\[B_{\alpha}=q^{n(\lambda)}
\omega \Ht_{\lambda}[X;q,t]\big|_{t=q^{-1},q=0}.\]
\end{thm}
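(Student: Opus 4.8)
The statement bundles two assertions: the monomial expansion \eqref{eq:cmpshuff}, and the evaluation of the symmetric function $B_\alpha$ at $\alpha=\revla$. The plan is to take the first as input and to establish the second by a short symmetric-function computation. For \eqref{eq:cmpshuff} itself I would not attempt a fresh proof: it is the compositional shuffle theorem of \cite{carlsson2018proof}, recorded here in the ``$\dinv+\doff$'' normalization. All that is needed beyond the cited result is to pass from the standard ``$\dinv$'' form to the present one, which is a bookkeeping step. One groups the $\alpha$-parking functions according to the lattice points at which $\dyckpath(P)$ returns to the diagonal and checks that transferring the weight $q^{\dinv(P)}$ to $q^{\dinv(P)+\doff_\alpha(\dyckpath(P))}$ leaves the generating function unchanged, since $\doff_\alpha$ merely records the cumulative offset produced by those returns. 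I would verify this on the level of the defining statistics rather than reprove the theorem.

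The substance is the identity $B_{\revla}=q^{n(\lambda)}\,\omega\Ht_\lambda[X;q,t]\big|_{t=q^{-1},q=0}$, and I would prove it by evaluating the right-hand side directly. Writing $\omega\Ht_\lambda[X;q,t]=\sum_\mu \tilde{K}_{\mu\lambda}(q,t)\,s_{\mu'}$, I would recall the bidegree bounds $\deg_q\tilde{K}_{\mu\lambda}\le n(\lambda')$ and $\deg_t\tilde{K}_{\mu\lambda}\le n(\lambda)$. Under $t=q^{-1}$ a monomial $q^{a}t^{b}$ becomes $q^{a-b}$, so multiplication by $q^{n(\lambda)}$ turns it into $q^{\,n(\lambda)+a-b}$; since $a\ge 0$ and $b\le n(\lambda)$, this exponent is always nonnegative and equals $0$ precisely when $a=0$ and $b=n(\lambda)$. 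Hence setting $q=0$ extracts exactly the coefficient of $q^{0}t^{n(\lambda)}$ in $\omega\Ht_\lambda[X;q,t]$.

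It then remains to identify that coefficient. Taking $q=0$ first collapses $\omega\Ht_\lambda[X;q,t]$ to the modified Hall--Littlewood polynomial $\omega\Ht_\lambda[X;t]$, and I would read off the coefficient of its top power $t^{n(\lambda)}$. By the standard triangularity and normalization of the modified Hall--Littlewood polynomial (the relevant instance of \eqref{eq:HL}, equivalently the fact that $\tilde{K}_{\lambda\lambda}(t)=t^{n(\lambda)}$ is the unique coefficient of degree $n(\lambda)$, cf.\ \cite{macdonald1995symmetric}), this top coefficient is the single Schur function $s_\lambda$. Applying $\omega$ gives $s_{\lambda'}$, so the right-hand side equals $s_{\lambda'}$. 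Matching this with the value $B_{\revla}=s_{\lambda'}$ attached to the reverse partition in the shuffle framework of \cite{carlsson2018proof,haglund2005combinatoriala} completes the identification.

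The main obstacle is this last matching step. The degree bookkeeping above is routine, but pinning down $B_{\revla}$ from its operator definition in \cite{carlsson2018proof} and seeing that it is precisely $s_{\lambda'}$ is where the real content lies. A route that avoids the operator definition is to compare the two candidates after applying $\nabla$: \eqref{eq:cmpshuff} computes $\nabla B_{\revla}\big|_{m_\mu}$ as a parking-function sum, while $\nabla s_{\lambda'}\big|_{m_\mu}$ can be expanded through the Macdonald eigenoperator action of $\nabla$, and one checks that the two agree for every $\mu$. Verifying that these coincide --- equivalently, that the lowest-degree-in-$q$ part of \eqref{eq:cmpshuff} reproduces the ribbon expansion of Proposition \ref{prop:ribbonhall} --- is the step I expect to require the most care.
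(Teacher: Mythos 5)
The paper offers no proof of this statement at all: it is imported verbatim from \cite{carlsson2018proof}, with the $\dinv+\doff_\alpha$ formulation attributed to \cite{haglund2005combinatoriala}, so your proposal is being measured against a citation rather than an internal argument. The part of your write-up that goes beyond the citation is sound as far as it goes: since every monomial $q^at^b$ appearing in a coefficient of $\Ht_\lambda[X;q,t]$ satisfies $a\geq 0$ and $b\leq n(\lambda)$, the substitution $t=q^{-1}$, the prefactor $q^{n(\lambda)}$, and the evaluation $q=0$ together extract exactly the coefficient of $q^0t^{n(\lambda)}$; and by the charge formula for Kostka--Foulkes polynomials (charge zero is attained only by the superstandard tableau of shape $\lambda$ --- note that axiom (T3) of \eqref{eq:HL} alone does not give the uniqueness of the top $t$-degree across all coefficients), that coefficient of $\Ht_\lambda[X;t]$ is the single term $s_\lambda$. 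Hence the right-hand side of the asserted identity is $\omega s_\lambda=s_{\lambda'}$; this reduction is correct and genuinely useful.

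The gap is precisely the step you flag as ``where the real content lies,'' and your fallback for it would not work: comparing $\nabla s_{\lambda'}\big|_{m_\mu}$ with the parking-function sum for every $\mu$ is circular, since that equality \emph{is} the theorem with $s_{\lambda'}$ substituted for $B_{\revla}$, and $\nabla s_{\lambda'}$ admits no tractable direct expansion ($s_{\lambda'}$ expands in the $\Ht_\mu$ eigenbasis only as a signed sum with inverse-Kostka-type coefficients, so ``expanding through the eigenoperator action'' leaves you reproving Carlsson--Mellit rather than checking anything). The non-circular route stays at the level of the operator definition: in \cite{haglund2005combinatoriala} the operators $\mathbb{B}_a$ are $\omega$-twisted Bernstein vertex operators, so applying them to $1$ in the order prescribed by $\revla$ yields, after the standard straightening relations, a single Schur function, and the classical vertex-operator identity identifies it as $s_{\lambda'}$ --- a citation-level fact requiring no new computation. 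Relatedly, your description of the passage from the plain $\dinv$ form (the $\mathbb{C}_\alpha$ version) to the present $\dinv+\doff_\alpha$ form (the $\mathbb{B}_\alpha$ version) as weight-preserving bookkeeping is inaccurate: $\doff_\alpha$ is generally positive, so the generating function over $\PF_\alpha$ genuinely changes, and the equivalence rests on expanding the $\mathbb{B}$-operators as $q$-weighted sums of $\mathbb{C}$-operators indexed by touch compositions refining $\alpha$, with the $q$-powers accounting exactly for $\doff_\alpha$ --- again an operator identity from the cited sources, not a regrouping of a fixed sum.
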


We also have the ``odd'' version, which says that we obtain
$\omega \nabla B_{\alpha}$ by
replacing $\shuff(\mu)$ with 
$\shuff'(\mu)$.

\begin{cor}
The Hall-Littlewood polynomial is given by
\[\Ht_{\lambda'}[X;t]\big|_{m_\mu}=
\sum_{P\in \PF^{0}_{\alpha}:
\readword(P)\in \shuff(\mu)} t^{\area(P)}
\]
where $\alpha=\revla$, and
$\PF_{\alpha}^0\subset \PF_{\alpha}$ is the subcollection of parking functions $P$ which minimize the $q$-degree, i.e.
$\dinv(P)+\doff_{\alpha}(\dyckpath(P))=
n(\lambda)$.
\end{cor}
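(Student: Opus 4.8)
The plan is to obtain the Corollary by extracting the lowest power of $q$ from both sides of the Compositional Shuffle Theorem applied to the composition $\alpha=\revla$. First I would observe that the right-hand side $\sum_{P\in\PF_\alpha,\ \readword(P)\in\shuff(\mu)} t^{\area(P)} q^{\dinv(P)+\doff_\alpha(\dyckpath(P))}$ is a sum of monomials with non-negative integer exponents and positive coefficients, so there is no cancellation; consequently the coefficient of the lowest occurring power of $q$ is exactly $\sum_P t^{\area(P)}$ summed over those $P$ that minimize $\dinv(P)+\doff_\alpha(\dyckpath(P))$, i.e. over $\PF_\alpha^0$. Since the two sides of the theorem are equal, this coefficient must also equal the lowest-$q$ coefficient of $\nabla B_\alpha\big|_{m_\mu}$, so the whole problem reduces to identifying the lowest-degree-in-$q$ part of $\nabla B_\alpha$ as a symmetric function.

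The heart of the argument is therefore the symmetric-function computation on the left. Here I would use the given expression $B_\alpha = q^{n(\lambda)}\,\omega\Ht_\lambda[X;q,t]\big|_{t=q^{-1},\,q=0}$ together with the eigenoperator property of $\nabla$ on the modified Macdonald basis and the conjugation symmetry $\Ht_\lambda[X;q,t]=\Ht_{\lambda'}[X;t,q]$. Expanding $B_\alpha$ against $\{\Ht_\nu[X;q,t]\}$ and applying $\nabla$ term by term, one tracks the lowest surviving power of $q$; I expect this bookkeeping to show that the minimal power is precisely $q^{n(\lambda)}$ and that its coefficient is $\Ht_{\lambda'}[X;t]$. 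As an independent check, and perhaps as the cleanest way to finish, I would verify that the resulting symmetric function $G$ satisfies the three characterizing conditions of \eqref{eq:HL} for $\Ht_{\lambda'}[X;t]$: that $\omega G$ is monomial-positive and supported on $\mu\trianglerighteq\lambda$, that $G[(t-1)X]$ is supported on $\mu\trianglelefteq\lambda'$, and that the leading coefficient is $t^{n(\lambda')}$. The presence of the $\omega\Ht_\lambda$ factor inside $B_\alpha$ makes (T1) and (T3) natural to read off.

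Once the left-hand side is identified as $q^{n(\lambda)}\Ht_{\lambda'}[X;t]$, the remaining claims follow formally. Matching lowest-$q$ coefficients forces $\min_P\big(\dinv(P)+\doff_\alpha(\dyckpath(P))\big)=n(\lambda)$, which is exactly the defining condition of $\PF_\alpha^0$; there is no need to establish this combinatorial minimum by a separate argument, since the no-cancellation observation propagates the value $n(\lambda)$ from the left. Extracting the coefficient of $m_\mu$ then yields $\Ht_{\lambda'}[X;t]\big|_{m_\mu}=\sum_{P\in\PF_\alpha^0:\ \readword(P)\in\shuff(\mu)} t^{\area(P)}$, as claimed.

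The step I expect to be the main obstacle is the left-hand identification, namely controlling how the substitution $t=q^{-1}$ followed by the $q=0$ specialization inside $B_\alpha$ interacts with the rational coefficients produced when $\nabla$ is expanded against the $\{\Ht_\nu\}$ basis. If a clean term-by-term degree count proves awkward, I would fall back on the axiomatic verification via \eqref{eq:HL}, which sidesteps the explicit Macdonald expansion at the cost of separately computing $\omega G$ and the plethysm $G[(t-1)X]$ in the lowest $q$-degree.
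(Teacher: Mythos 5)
Your proposal is correct and takes essentially the same route as the paper: the paper's proof likewise extracts the lowest $q$-degree terms from both sides of \eqref{eq:cmpshuff}, relying on positivity (no cancellation) on the parking-function side and, on the symmetric-function side, on the fact that $\nabla$ is triangular in the modified Hall--Littlewood basis with the lowest $q$-degree terms appearing on the diagonal. Your Macdonald-expansion bookkeeping for $\nabla B_{\revla}$ (and the fallback verification via \eqref{eq:HL}) is just an unpacked version of that cited triangularity, so the two arguments coincide in substance.
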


\begin{proof}

Take the lowest degree terms in $q$ of both sides
of both sides of \eqref{eq:cmpshuff}, and
use the fact that $\nabla$ is triangular in the modified Hall-Littlewood basis, with lowest degree terms in $q$ appearing on the diagonal.    
\end{proof}

\begin{example}
\label{ex:hallparking}
    We have that
\[\Ht_{(2,1,1)}[X;t]=
m_{{4}}+ \left( {t}^{2}+t+1 \right) m_{{3,1}}+ \left( 2
{t}^{2}+t+1 \right) m_{{2,2}}+\]
\[\left( {t}^{3}+3{t}^{2}+2
+1 \right) m_{{2,1,1}}+ \left(3{t}^{3}+5{t}^{2}+3t+1
 \right) m_{{1,1,1,1}}    \]
The parking functions in $\PF^0_{1,3}$
are given by

\begin{tikzpicture}[scale=.5]\draw[dotted] (0,0)--(4,4);\draw[black] (0,0)--(0,4);\draw[black] (0,0)--(4,0);\draw[black] (0,4)--(4,4);\draw[black] (4,0)--(4,4);\draw[black] (0,0)--(0,1);\node[scale=1] (2) at (0.500000,0.500000) {2};\draw[black] (0,1)--(1,1);\draw[black] (1,1)--(1,2);\node[scale=1] (1) at (1.500000,1.500000) {1};\draw[black] (1,2)--(1,3);\node[scale=1] (3) at (1.500000,2.500000) {3};\draw[black] (1,3)--(1,4);\node[scale=1] (4) at (1.500000,3.500000) {4};\end{tikzpicture}
\begin{tikzpicture}[scale=.5]\draw[dotted] (0,0)--(4,4);\draw[black] (0,0)--(0,4);\draw[black] (0,0)--(4,0);\draw[black] (0,4)--(4,4);\draw[black] (4,0)--(4,4);\draw[black] (0,0)--(0,1);\node[scale=1] (3) at (0.500000,0.500000) {3};\draw[black] (0,1)--(1,1);\draw[black] (1,1)--(1,2);\node[scale=1] (1) at (1.500000,1.500000) {1};\draw[black] (1,2)--(1,3);\node[scale=1] (2) at (1.500000,2.500000) {2};\draw[black] (1,3)--(1,4);\node[scale=1] (4) at (1.500000,3.500000) {4};\end{tikzpicture}
\begin{tikzpicture}[scale=.5]\draw[dotted] (0,0)--(4,4);\draw[black] (0,0)--(0,4);\draw[black] (0,0)--(4,0);\draw[black] (0,4)--(4,4);\draw[black] (4,0)--(4,4);\draw[black] (0,0)--(0,1);\node[scale=1] (4) at (0.500000,0.500000) {4};\draw[black] (0,1)--(1,1);\draw[black] (1,1)--(1,2);\node[scale=1] (1) at (1.500000,1.500000) {1};\draw[black] (1,2)--(1,3);\node[scale=1] (2) at (1.500000,2.500000) {2};\draw[black] (1,3)--(1,4);\node[scale=1] (3) at (1.500000,3.500000) {3};\end{tikzpicture}
\begin{tikzpicture}[scale=.5]\draw[dotted] (0,0)--(4,4);\draw[black] (0,0)--(0,4);\draw[black] (0,0)--(4,0);\draw[black] (0,4)--(4,4);\draw[black] (4,0)--(4,4);\draw[black] (0,0)--(0,1);\node[scale=1] (2) at (0.500000,0.500000) {2};\draw[black] (0,1)--(1,1);\draw[black] (1,1)--(1,2);\node[scale=1] (1) at (1.500000,1.500000) {1};\draw[black] (1,2)--(1,3);\node[scale=1] (4) at (1.500000,2.500000) {4};\draw[black] (1,3)--(2,3);\draw[black] (2,3)--(2,4);\node[scale=1] (3) at (2.500000,3.500000) {3};\end{tikzpicture}
\begin{tikzpicture}[scale=.5]\draw[dotted] (0,0)--(4,4);\draw[black] (0,0)--(0,4);\draw[black] (0,0)--(4,0);\draw[black] (0,4)--(4,4);\draw[black] (4,0)--(4,4);\draw[black] (0,0)--(0,1);\node[scale=1] (3) at (0.500000,0.500000) {3};\draw[black] (0,1)--(1,1);\draw[black] (1,1)--(1,2);\node[scale=1] (1) at (1.500000,1.500000) {1};\draw[black] (1,2)--(1,3);\node[scale=1] (4) at (1.500000,2.500000) {4};\draw[black] (1,3)--(2,3);\draw[black] (2,3)--(2,4);\node[scale=1] (2) at (2.500000,3.500000) {2};\end{tikzpicture}
\begin{tikzpicture}[scale=.5]\draw[dotted] (0,0)--(4,4);\draw[black] (0,0)--(0,4);\draw[black] (0,0)--(4,0);\draw[black] (0,4)--(4,4);\draw[black] (4,0)--(4,4);\draw[black] (0,0)--(0,1);\node[scale=1] (3) at (0.500000,0.500000) {3};\draw[black] (0,1)--(1,1);\draw[black] (1,1)--(1,2);\node[scale=1] (2) at (1.500000,1.500000) {2};\draw[black] (1,2)--(1,3);\node[scale=1] (4) at (1.500000,2.500000) {4};\draw[black] (1,3)--(2,3);\draw[black] (2,3)--(2,4);\node[scale=1] (1) at (2.500000,3.500000) {1};\end{tikzpicture}

\begin{tikzpicture}[scale=.5]\draw[dotted] (0,0)--(4,4);\draw[black] (0,0)--(0,4);\draw[black] (0,0)--(4,0);\draw[black] (0,4)--(4,4);\draw[black] (4,0)--(4,4);\draw[black] (0,0)--(0,1);\node[scale=1] (4) at (0.500000,0.500000) {4};\draw[black] (0,1)--(1,1);\draw[black] (1,1)--(1,2);\node[scale=1] (1) at (1.500000,1.500000) {1};\draw[black] (1,2)--(1,3);\node[scale=1] (3) at (1.500000,2.500000) {3};\draw[black] (1,3)--(2,3);\draw[black] (2,3)--(2,4);\node[scale=1] (2) at (2.500000,3.500000) {2};\end{tikzpicture}
\begin{tikzpicture}[scale=.5]\draw[dotted] (0,0)--(4,4);\draw[black] (0,0)--(0,4);\draw[black] (0,0)--(4,0);\draw[black] (0,4)--(4,4);\draw[black] (4,0)--(4,4);\draw[black] (0,0)--(0,1);\node[scale=1] (4) at (0.500000,0.500000) {4};\draw[black] (0,1)--(1,1);\draw[black] (1,1)--(1,2);\node[scale=1] (2) at (1.500000,1.500000) {2};\draw[black] (1,2)--(1,3);\node[scale=1] (3) at (1.500000,2.500000) {3};\draw[black] (1,3)--(2,3);\draw[black] (2,3)--(2,4);\node[scale=1] (1) at (2.500000,3.500000) {1};\end{tikzpicture}
\begin{tikzpicture}[scale=.5]\draw[dotted] (0,0)--(4,4);\draw[black] (0,0)--(0,4);\draw[black] (0,0)--(4,0);\draw[black] (0,4)--(4,4);\draw[black] (4,0)--(4,4);\draw[black] (0,0)--(0,1);\node[scale=1] (3) at (0.500000,0.500000) {3};\draw[black] (0,1)--(1,1);\draw[black] (1,1)--(1,2);\node[scale=1] (2) at (1.500000,1.500000) {2};\draw[black] (1,2)--(2,2);\draw[black] (2,2)--(2,3);\node[scale=1] (1) at (2.500000,2.500000) {1};\draw[black] (2,3)--(2,4);\node[scale=1] (4) at (2.500000,3.500000) {4};\end{tikzpicture}
\begin{tikzpicture}[scale=.5]\draw[dotted] (0,0)--(4,4);\draw[black] (0,0)--(0,4);\draw[black] (0,0)--(4,0);\draw[black] (0,4)--(4,4);\draw[black] (4,0)--(4,4);\draw[black] (0,0)--(0,1);\node[scale=1] (4) at (0.500000,0.500000) {4};\draw[black] (0,1)--(1,1);\draw[black] (1,1)--(1,2);\node[scale=1] (2) at (1.500000,1.500000) {2};\draw[black] (1,2)--(2,2);\draw[black] (2,2)--(2,3);\node[scale=1] (1) at (2.500000,2.500000) {1};\draw[black] (2,3)--(2,4);\node[scale=1] (3) at (2.500000,3.500000) {3};\end{tikzpicture}
\begin{tikzpicture}[scale=.5]\draw[dotted] (0,0)--(4,4);\draw[black] (0,0)--(0,4);\draw[black] (0,0)--(4,0);\draw[black] (0,4)--(4,4);\draw[black] (4,0)--(4,4);\draw[black] (0,0)--(0,1);\node[scale=1] (4) at (0.500000,0.500000) {4};\draw[black] (0,1)--(1,1);\draw[black] (1,1)--(1,2);\node[scale=1] (3) at (1.500000,1.500000) {3};\draw[black] (1,2)--(2,2);\draw[black] (2,2)--(2,3);\node[scale=1] (1) at (2.500000,2.500000) {1};\draw[black] (2,3)--(2,4);\node[scale=1] (2) at (2.500000,3.500000) {2};\end{tikzpicture}
\begin{tikzpicture}[scale=.5]\draw[dotted] (0,0)--(4,4);\draw[black] (0,0)--(0,4);\draw[black] (0,0)--(4,0);\draw[black] (0,4)--(4,4);\draw[black] (4,0)--(4,4);\draw[black] (0,0)--(0,1);\node[scale=1] (4) at (0.500000,0.500000) {4};\draw[black] (0,1)--(1,1);\draw[black] (1,1)--(1,2);\node[scale=1] (3) at (1.500000,1.500000) {3};\draw[black] (1,2)--(2,2);\draw[black] (2,2)--(2,3);\node[scale=1] (2) at (2.500000,2.500000) {2};\draw[black] (2,3)--(3,3);\draw[black] (3,3)--(3,4);\node[scale=1] (1) at (3.500000,3.500000) {1};\end{tikzpicture}
\\
Summing $t^{\area(P)}$ gives the desired coefficient of $m_{1,1,1,1}$.

\end{example}

We now have an extension of Lemma \ref{lem:dinvzero}.
\begin{prop}
\label{prop:mindd}
Let $\alpha=\revla$ where $\lambda$ is a Young diagram.
Then the elements of $P\in\PF_{\alpha}^0$
are determined by the following properties:
\begin{enumerate}
\item \label{item:isribbon} Within each block
$P^{(k)}$ of size $\alpha_k$, the area sequence
$a_i$ is non-strictly increasing.
whenever $a_{i}=a_{i+1}$ in a given block,
we have $\sigma_i>\sigma_{i+1}$.
\item The dinv pairs $(\sigma_i,\sigma_j)\in \dinvn(P)$ all have $(\sigma_i,\sigma_j)$ occurring accross different blocks.
Moreover, for each pair of different blocks,
$P^{(b)},P^{(c)}$ with $b<c$ and each 
$\sigma_i \in P^{(b)}$, there is exactly one dinv pair $(\sigma_i,\sigma_j)$ with $\sigma_j\in P^{(c)}$.
\item There are no dinv pairs $(\sigma_i,\sigma_j)$ for which $a_i=0$.
\end{enumerate}
\end{prop}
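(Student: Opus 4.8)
The plan is to identify $\PF_{\alpha}^0$ with the set of $P\in\PF_{\alpha}$ minimizing $\dinv(P)+\doff_{\alpha}(\dyckpath(P))$, whose minimum value is $n(\lambda)$ by the Corollary above, and to read the three properties off the equality case of a lower bound organized by the block decomposition $P=(P^{(1)},\dots,P^{(\ell)})$. Sorting the pairs of $\dinvn(P)$ according to whether a dinv pair lies inside a single block or across two of them, and writing $\kappa(b,c)$ for the number of dinv pairs $(\sigma_i,\sigma_j)$ with $\sigma_i\in P^{(b)}$ and $\sigma_j\in P^{(c)}$, I would record the two identities
\[ \dinv(P)=\sum_{k}\dinv(P^{(k)})+\sum_{b<c}\kappa(b,c), \qquad \doff_{\alpha}(\dyckpath(P))=\sum_{k}(\ell-k)r_k=\sum_{b<c}r_b, \]
the second because each $r_k$ counts the level-$0$ cells of block $k$ and block $k$ is followed by $\ell-k$ later blocks. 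Since $\alpha=\revla$, the target constant rearranges the same way, $n(\lambda)=\sum_b(\ell-b)\alpha_b=\sum_{b<c}\alpha_b$, so the whole statistic becomes $\sum_k\dinv(P^{(k)})+\sum_{b<c}\bigl(\kappa(b,c)+r_b\bigr)$, to be matched against $\sum_{b<c}\alpha_b$ term by term.

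For the direction that the three properties force $P\in\PF_{\alpha}^0$, I would simply evaluate this decomposition. Property (1) says every block is a $\dinv$-free parking function, so Lemma \ref{lem:dinvzero}, applied inside each block, gives $\dinv(P^{(k)})=0$. Properties (2) and (3) say that the $r_b$ level-$0$ cells of $P^{(b)}$ create no dinv pair with a later block, while each of the remaining $\alpha_b-r_b$ cells creates exactly one with each later block, so $\kappa(b,c)=\alpha_b-r_b$. Summing gives $\dinv(P)+\doff_{\alpha}(\dyckpath(P))=\sum_{b<c}\bigl((\alpha_b-r_b)+r_b\bigr)=\sum_{b<c}\alpha_b=n(\lambda)$, so $P$ is a minimizer and lies in $\PF_{\alpha}^0$.

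The converse rests on the per-pair bound
\[ \kappa(b,c)+r_b\ \ge\ \alpha_b \qquad (b<c), \]
which is the crux and which should use that $\alpha=\revla$ forces $\alpha_b\le\alpha_c$. Granting it, and using $\dinv(P^{(k)})\ge 0$, the decomposition gives $\dinv(P)+\doff_{\alpha}(\dyckpath(P))\ge\sum_{b<c}\alpha_b=n(\lambda)$ for every $P\in\PF_{\alpha}$. As $n(\lambda)$ is the minimum, a minimizer must turn every inequality into an equality: $\dinv(P^{(k)})=0$ for all $k$, which by Lemma \ref{lem:dinvzero} is exactly Property (1); and $\kappa(b,c)+r_b=\alpha_b$ for all $b<c$, whose equality clause—examined once Property (1) is in hand, so that both blocks are sorted by level with labels weakly decreasing within a level—supplies Properties (2) and (3).

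The main obstacle is the per-pair bound together with its equality clause. I would prove the inequality by a charging argument rooted in the parking-function condition, which forces the label of a cell to exceed the label of the cell one level below it reached by an up-step: this ties each sufficiently low cell of the later block $P^{(c)}$ to a dinv pair with some cell of $P^{(b)}$, and the hypothesis $\alpha_b\le\alpha_c$ guarantees there are enough such cells to account for all $\alpha_b-r_b$ units. The equality case is where Properties (2) and (3) come from: once Property (1) is available the blocks are level-sorted, and equality forces the resulting correspondence to be cell-preserving and to exhaust all cross pairs, so that no level-$0$ cell of $P^{(b)}$ is paired (Property (3)) and each higher cell is paired exactly once in each later block (Property (2)). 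I expect the genuinely delicate step to be establishing the inequality for arbitrary, not-yet-sorted area sequences and ruling out the slack that level-$0$ interactions can introduce; a natural simplification is to first show by a local exchange of two rows that any violation of the monotonicity in Property (1) strictly lowers the statistic, reducing the entire argument to level-sorted blocks where the charging becomes a clean cell-to-cell matching.
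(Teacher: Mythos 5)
Your plan is essentially the paper's own proof, translated from its ribbon-tableau language into parking-function blocks: the paper establishes exactly your pairwise decomposition, $\dinv(\tilde{T})+\doff_\lambda(\tilde{T})=\sum_{i<j}\bigl(\#\{x\in T_j:\height(x)=0\}+\dinv_{i,j}(\tilde{T})\bigr)$, proves your per-pair bound in the form ``each pair $(i,j)$ contributes at least $\lambda_j$'' (under the shearing bijection the component $T_j$ is your earlier block $P^{(b)}$, so this is precisely $\kappa(b,c)+r_b\geq\alpha_b$), sums to $n(\lambda)$, and reads the three conditions off forced equality. Two concrete notes on your crux. First, the paper proves the bound by a dichotomy on relative heights: when $\height(T_i)\geq\height(T_j)$ it charges each cell of the \emph{earlier} (smaller) block either to $r_b$ or to a cross dinv pair built from the same-row cell and the cell beneath it; when $\height(T_i)<\height(T_j)$ it charges all $\alpha_c$ cells of the later block, and the unavoidable level-$0$ cell of the earlier block makes the inequality strict, excluding that configuration at equality. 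Your one-line sketch charging ``sufficiently low cells of the later block $P^{(c)}$'' is oriented the wrong way---counting cells of $P^{(c)}$ cannot produce the lower bound $\alpha_b-r_b$---though your equality-clause discussion, which matches the non-level-$0$ cells of $P^{(b)}$, has the correct orientation; run the charging from block $b$ and keep the height case split. Second, your insistence on handling arbitrary, unsorted blocks is a genuine refinement: the paper's argument is carried out entirely inside $\ribtabs_\lambda$, i.e.\ for $P$ already satisfying condition (1), and never explicitly excludes a minimizer over all of $\PF_\alpha$ violating it, whereas in your global version this follows from $\dinv(P^{(k)})\geq 0$ together with Lemma \ref{lem:dinvzero}, provided the per-pair bound is proved without assuming sortedness (your local-exchange reduction is then only needed as a fallback).
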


Before proving the proposition, we will find it useful in what follows to describe the corressponding elements in terms of certain ribbon-shaped tableaux:
\begin{defn}
Let $\ribtabs_{n}$ be the collection of first-quadrant skew diagrams with no $2\times 2$ shapes (ribbons), filled with
distinct integers increasing in the 
North/East directions.
Given a composition
$\alpha=(\alpha_1,...,\alpha_l)$, 
let $\ribtabs_{\lambda}$ 
be the collection of $l$-tuples of
ribbon-shaped tableaux 
$\tilde{T}=(T_1,...,T_l)$ with 
$|T_i|=\lambda_i$, and with 
lowest boxes aligned.
Given a box $x\in \tilde{T}$, the corresponding entry is denoted by $\sigma(x)$.
\end{defn}

Then we have an injective map
$\ribtabs_{\lambda}\rightarrow \PF_{\alpha}$
for $\alpha=\revla$ which reflects the diagram over the $y$-axis 
and then shears all East steps
into Northeast steps. 
For instance, the tableau $\tilde{T}$ given by

\begin{tikzpicture}
    \filldraw[thin,fill=white]
    (3.500000,0.000000)--(4.000000,0.000000)--(4.000000,0.500000)--(3.500000
    ,0.500000)--(3.500000,0.000000);
    \node at (3.750000,0.250000) {2};
    \filldraw[thin,fill=white]
    (2.500000,0.000000)--(3.000000,0.000000)--(3.000000,0.500000)--(2.500000
    ,0.500000)--(2.500000,0.000000);
    \node at (2.750000,0.250000) {4};
    \filldraw[thin,fill=white]
    (2.500000,0.500000)--(3.000000,0.500000)--(3.000000,1.000000)--(2.500000
    ,1.000000)--(2.500000,0.500000);
    \node at (2.750000,0.750000) {7};
    \filldraw[thin,fill=white]
    (1.500000,0.000000)--(2.000000,0.000000)--(2.000000,0.500000)--(1.500000
    ,0.500000)--(1.500000,0.000000);
    \node at (1.750000,0.250000) {9};
    \filldraw[thin,fill=white]
    (1.000000,0.000000)--(1.500000,0.000000)--(1.500000,0.500000)--(1.000000
    ,0.500000)--(1.000000,0.000000);
    \node at (1.250000,0.250000) {1};
    \filldraw[thin,fill=white]
    (1.000000,0.500000)--(1.500000,0.500000)--(1.500000,1.000000)--(1.000000
    ,1.000000)--(1.000000,0.500000);
    \node at (1.250000,0.750000) {5};
    \filldraw[thin,fill=white]
    (1.000000,1.000000)--(1.500000,1.000000)--(1.500000,1.500000)--(1.000000
    ,1.500000)--(1.000000,1.000000);
    \node at (1.250000,1.250000) {8};
    \filldraw[thin,fill=white]
    (0.500000,1.000000)--(1.000000,1.000000)--(1.000000,1.500000)--(0.500000
    ,1.500000)--(0.500000,1.000000);
    \node at (0.750000,1.250000) {3};
    \filldraw[thin,fill=white]
    (0.500000,1.500000)--(1.000000,1.500000)--(1.000000,2.000000)--(0.500000
    ,2.000000)--(0.500000,1.500000);
    \node at (0.750000,1.750000) {6};
\end{tikzpicture}
\\
corresponds to the parking function from Example \ref{ex:doff}.
It is straightforward to see that the image of this map is precisely those parking functions
$P\in \PF_{\lambda}$ satisfying condition
\ref{item:isribbon} from Proposition \ref{prop:mindd}.

We can easily interpret the parking function statistics under this map in terms of $\tilde{T}$. The corresponding dinv statistic is 
the number of pairs 
of entries $(a,b)$, taken either from the
same component $T_i$ or from different ones, satisfying one of the 
following:
\\
$i.\  \qquad \vcenter{ \hbox{
\begin{ytableau}
    a & \none & \none & \none & b 
\end{ytableau} }}
$, $\quad a>b$
\\
$
ii. \qquad \vcenter{ \hbox{
\begin{ytableau}
    \none & \none & \none & \none & b \\
    a & \none & \none & \none & *(gray)
\end{ytableau}
}},\quad a<b
$
%for which $b > a$.
\\

 Given a box $x \in \tilde{T}$, let $\height(x)$ be the height of the box, so that 
    the bottom row has height $0$. The 
    height of a ribbon $T_i$ is $\height(T_i) = \max_{x \in T_i}\{\height(x)\}$. We next
    have that the $\doff_{\alpha}$ 
    statistic corresponds to $\doff_{\lambda}(\tilde{T})$,
where
    \begin{equation}
        \doff_{\lambda}(\tilde{T}) = \sum_{i = 1}^{\ell({\lambda})} (i-1)\bigg(\#\bigg\{x \in T_i : \height(x) = 0\bigg\}\bigg) = \sum_{i < j} \#\big\{ x \in T_j : 
        \height(x) = 0 \big\}
    \end{equation}

Last, the area sequence is the result of
setting $a_k$ to be the height of the box containing the entry $k$.
The reading word of $\readword(\tilde{T})$ corresponds to the common reading order from top to bottom, right to left.

Then interpreting the other two conditions shows that the following ribbons are in bijection
with $\PF_{\alpha}^0$.
\begin{defn}
\label{def:minrib}
Let $\lambda$ be a Young diagram, and define $\mathcal{R}^0_{\lambda}\subset \mathcal{R}_{\lambda}$ be the subcollection
of tableaux $\tilde{T}$ for which
\begin{enumerate}
    \item \label{item:minribdinv} All dinv-pairs 
    $(\sigma(x),\sigma(y))$ occur in different
    tableaux $x\in T_i$,
    and $y\in T_j$ for $i<j$.
Each $y \in T_j$ appears in exactly one dinv pair $(\sigma(x),\sigma(y))$ with $x \in T_i$ for each $i<j$.
\item \label{itemi:minribbottom} There are no dinv pairs $(\sigma(x),\sigma(y))$ with the rightmost box $y$ in the 
bottom row. In other words, the bottom row is in increasing order.
\end{enumerate}
\end{defn}
For instance, the following is an element of 
$\ribtabs^0_{10,9,4}$:
\\
\begin{tikzpicture}
 \filldraw[thin,fill=white]
    (8.500000,0.000000)--(9.000000,0.000000)--(9.000000,0.500000)--(8.500000
    ,0.500000)--(8.500000,0.000000);
    \node at (8.750000,0.250000) {19};
    \filldraw[thin,fill=white]
    (8.000000,0.000000)--(8.500000,0.000000)--(8.500000,0.500000)--(8.000000
    ,0.500000)--(8.000000,0.000000);
    \node at (8.250000,0.250000) {9};
    \filldraw[thin,fill=white]
    (8.000000,0.500000)--(8.500000,0.500000)--(8.500000,1.000000)--(8.000000
    ,1.000000)--(8.000000,0.500000);
    \node at (8.250000,0.750000) {14};
    \filldraw[thin,fill=white]
    (7.500000,0.500000)--(8.000000,0.500000)--(8.000000,1.000000)--(7.500000
    ,1.000000)--(7.500000,0.500000);
    \node at (7.750000,0.750000) {1};
    \filldraw[thin,fill=white]
    (6.500000,0.000000)--(7.000000,0.000000)--(7.000000,0.500000)--(6.500000
    ,0.500000)--(6.500000,0.000000);
    \node at (6.750000,0.250000) {6};
    \filldraw[thin,fill=white]
    (6.500000,0.500000)--(7.000000,0.500000)--(7.000000,1.000000)--(6.500000
    ,1.000000)--(6.500000,0.500000);
    \node at (6.750000,0.750000) {11};
    \filldraw[thin,fill=white]
    (6.500000,1.000000)--(7.000000,1.000000)--(7.000000,1.500000)--(6.500000
    ,1.500000)--(6.500000,1.000000);
    \node at (6.750000,1.250000) {22};
    \filldraw[thin,fill=white]
    (6.000000,1.000000)--(6.500000,1.000000)--(6.500000,1.500000)--(6.000000
    ,1.500000)--(6.000000,1.000000);
    \node at (6.250000,1.250000) {21};
    \filldraw[thin,fill=white]
    (5.500000,1.000000)--(6.000000,1.000000)--(6.000000,1.500000)--(5.500000
    ,1.500000)--(5.500000,1.000000);
    \node at (5.750000,1.250000) {18};
    \filldraw[thin,fill=white]
    (5.000000,1.000000)--(5.500000,1.000000)--(5.500000,1.500000)--(5.000000
    ,1.500000)--(5.000000,1.000000);
    \node at (5.250000,1.250000) {15};
    \filldraw[thin,fill=white]
    (4.500000,1.000000)--(5.000000,1.000000)--(5.000000,1.500000)--(4.500000
    ,1.500000)--(4.500000,1.000000);
    \node at (4.750000,1.250000) {12};
    \filldraw[thin,fill=white]
    (4.500000,1.500000)--(5.000000,1.500000)--(5.000000,2.000000)--(4.500000
    ,2.000000)--(4.500000,1.500000);
    \node at (4.750000,1.750000) {20};
    \filldraw[thin,fill=white]
    (4.000000,1.500000)--(4.500000,1.500000)--(4.500000,2.000000)--(4.000000
    ,2.000000)--(4.000000,1.500000);
    \node at (4.250000,1.750000) {17};
    \filldraw[thin,fill=white]
    (3.000000,0.000000)--(3.500000,0.000000)--(3.500000,0.500000)--(3.000000
    ,0.500000)--(3.000000,0.000000);
    \node at (3.250000,0.250000) {2};
    \filldraw[thin,fill=white]
    (3.000000,0.500000)--(3.500000,0.500000)--(3.500000,1.000000)--(3.000000
    ,1.000000)--(3.000000,0.500000);
    \node at (3.250000,0.750000) {8};
    \filldraw[thin,fill=white]
    (3.000000,1.000000)--(3.500000,1.000000)--(3.500000,1.500000)--(3.000000
    ,1.500000)--(3.000000,1.000000);
    \node at (3.250000,1.250000) {10};
    \filldraw[thin,fill=white]
    (3.000000,1.500000)--(3.500000,1.500000)--(3.500000,2.000000)--(3.000000
    ,2.000000)--(3.000000,1.500000);
    \node at (3.250000,1.750000) {16};
    \filldraw[thin,fill=white]
    (2.500000,1.500000)--(3.000000,1.500000)--(3.000000,2.000000)--(2.500000
    ,2.000000)--(2.500000,1.500000);
    \node at (2.750000,1.750000) {7};
    \filldraw[thin,fill=white]
    (2.000000,1.500000)--(2.500000,1.500000)--(2.500000,2.000000)--(2.000000
    ,2.000000)--(2.000000,1.500000);
    \node at (2.250000,1.750000) {4};
    \filldraw[thin,fill=white]
    (1.500000,1.500000)--(2.000000,1.500000)--(2.000000,2.000000)--(1.500000
    ,2.000000)--(1.500000,1.500000);
    \node at (1.750000,1.750000) {3};
    \filldraw[thin,fill=white]
    (1.500000,2.000000)--(2.000000,2.000000)--(2.000000,2.500000)--(1.500000
    ,2.500000)--(1.500000,2.000000);
    \node at (1.750000,2.250000) {23};
    \filldraw[thin,fill=white]
    (1.000000,2.000000)--(1.500000,2.000000)--(1.500000,2.500000)--(1.000000
    ,2.500000)--(1.000000,2.000000);
    \node at (1.250000,2.250000) {13};
    \filldraw[thin,fill=white]
    (0.500000,2.000000)--(1.000000,2.000000)--(1.000000,2.500000)--(0.500000
    ,2.500000)--(0.500000,2.000000);
    \node at (0.750000,2.250000) {5};
\end{tikzpicture}

\begin{proof}[Proof of Proposition \ref{prop:mindd}]
We will prove that 
the set of those tableaux $\tilde{T} \in \ribtabs_\lambda$ which minimize
$\dinv(\tilde{T}) + \doff_{\lambda}(\tilde{T})$
is equal to 
$\ribtabs_{\lambda}^0$.
Suppose that $\tilde{T}$ is minimizes the
$\dinv+\doff_{\lambda}$ statistic,
and let us rewrite the sum
according to contribution by pairs $(\lambda_i,\lambda_j)$:
    \begin{equation}
    \label{eq:doffij}
        \dinv(\tilde{T}) + \doff_{\lambda}(\tilde{T}) = \sum_{i < j} \bigg( \#\big\{ x \in T_j : \height(x) = 0 \big\} + \dinv_{i,j}(\tilde{T})\} \bigg)
    \end{equation}
    Here $\dinv_{i,j}(\tilde{T})$ is the number of dinv pairs $(\sigma(x),\sigma(y))$ with $x \in T_i,y \in T_j$. We first show that the contribution of each pair $(i,j)$ to 
    the sum in \eqref{eq:doffij}
%    $\dinv(\tilde{T}) + 
 %   \doff_{\lambda}(\tilde{T})$ 
    is at least $\lambda_j$ 
    (and therefore exactly $\lambda_j$).

We have two cases: for a given $i<j$, suppose 
$\height(T_i) \geq  \height(T_j)$. This means for each square $y \in T_j$, there is a square $x \in T_i$ in the same row. Then, we have two cases:
    \begin{itemize}
        \item If $\height(y) = 0$, then $a$ contributes $1$ to $\doff_\lambda(T)$.
        \item If a square is not in the bottom row, then the following occurs:

        \begin{center}
            \begin{ytableau}
                x & \none & \none & \none & y \\
                z
            \end{ytableau}
        \end{center}
        where necessarily $\sigma(x) > \sigma(z)$. Then, if $\sigma(x) < \sigma(y)$, we have that $\sigma(y) > \sigma(z)$, so $(\sigma(z),\sigma(y)) \in \dinvn_{i,j}(\tilde{T})$. If $\sigma(x) > \sigma(y)$, then $(\sigma(x),\sigma(y)) \in \dinvn_{i,j}(\tilde{T})$.
    \end{itemize}
    In either case, $a$ contributes at least $1$ to $\dinv(T) + \doff_\lambda(T)$.

    In the other case, if $\height(T_i) < \height(T_j)$, then for each $x \in T_i$, there is a square in the same row $y \in T_j$, but also a square in the row above. Then, for each square $x \in T_i$, find the leftmost square in $y \in T_j$ of the same row. One of two things may occur:

    \begin{itemize}
        \item The entry $y$ is the corner of a ribbon:
        \begin{center}
        \begin{ytableau}
            \none & \none & \none & z & \none \\
            x & \none & \none & y & \dots
        \end{ytableau}
        \end{center}
        \item There is only one square in the same row:
        \begin{center}
        \begin{ytableau}
             \none & \none & \none & \none & z \\
             x & \none & \none & \none & y \\
             \none & \none & \none & \none & \vdots 
        \end{ytableau}
        \end{center}        
    \end{itemize}

    If $\sigma(x) < \sigma(y)$, then $(\sigma(x),\sigma(y)) \in \dinvn_{i,j}(\tilde{T})$. Otherwise, if $\sigma(x) > \sigma(y)$, since the tableau is column decreasing, we have that $\sigma(z) > \sigma(x) > \sigma(y) $, so that $(\sigma(z),\sigma(y)) \in \dinvn_{i,j}(\tilde{T})$. Then, every square of $T_i$ will contribute at least $1$ to $(\sigma(x),\sigma(y)) \in \dinvn_{i,j}(\tilde{T})$, so that the contribution is at least $\lambda_i \geq \lambda_j$.

Now the sum of $\lambda_j$ over pairs $i<j$ is $n(\lambda)$, so the inequality must be tight, and the above contribution must be exactly $\lambda_j$. Since there is at least one box in the bottom row of $T_j$ contributing to $\doff_{\lambda}$, 
it follows that the second case of $\height(T_i)<\height(T_j)$ can never happen.
But in the first case we showed every box in
$T_j$
satisfies item \ref{item:minribdinv} of 
Definition \ref{def:minrib}.
The second item is also clear.

\end{proof}

Putting this together, we have
\begin{prop}
\label{prop:ribbonhall}    
Let $\alpha=\revla$ as above. Then we have that
\begin{align}
\label{eq:ribhallprop}
\begin{split}
\Ht_{\lambda'}[X;t]\big|_{m_\mu}&=
\sum_{\tilde{T}\in \ribtabs^{0}_{\lambda}:
\readword(\tilde{T})\in 
\shuff(\mu)} t^{\area(\tilde{T})} \\
\omega \Ht_{\lambda'}[X;t]\big|_{m_\mu}&=
\sum_{\tilde{T}\in \ribtabs^{0}_{\lambda}:
\readword(\tilde{T})\in 
\shuff'(\mu)} t^{\area(\tilde{T})}
\end{split}
\end{align}
\end{prop}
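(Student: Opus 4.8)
The plan is to transport the Corollary preceding Example~\ref{ex:hallparking} across the injective map $\ribtabs_\lambda \to \PF_\alpha$ with $\alpha = \revla$, and to obtain the second identity by the same transport applied to the odd form of the Compositional Shuffle Theorem. The groundwork is already in place: the reflection-and-shear map embeds $\ribtabs_\lambda$ as exactly those $P \in \PF_\alpha$ satisfying item~\ref{item:isribbon} of Proposition~\ref{prop:mindd}, and under this map the statistics $\area$, $\dinv$, $\doff_\alpha$, and the reading word $\readword$ all acquire the ribbon-theoretic descriptions recorded after Definition~\ref{def:minrib}. In particular $\area(\tilde{T}) = \area(P)$, the quantity $\dinv + \doff_\alpha$ corresponds to $\dinv + \doff_\lambda$, and $\readword(\tilde{T}) = \readword(P)$.

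First I would establish that the map restricts to a bijection $\ribtabs^0_\lambda \leftrightarrow \PF^0_\alpha$ preserving both $\area$ and $\readword$. On the parking-function side $\PF^0_\alpha$ is, by definition, the set of $P \in \PF_\alpha$ minimizing $\dinv(P) + \doff_\alpha(\dyckpath(P))$, the common minimal value being $n(\lambda)$. Since every element of $\PF^0_\alpha$ satisfies item~\ref{item:isribbon} of Proposition~\ref{prop:mindd}, these minimizers all lie in the image of $\ribtabs_\lambda$; and because $\dinv + \doff_\alpha$ corresponds to $\dinv + \doff_\lambda$ under the map, minimizing over $\ribtabs_\lambda$ is the same as minimizing over the image. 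The proof of Proposition~\ref{prop:mindd} identifies the ribbon minimizers as precisely $\ribtabs^0_\lambda$, so the map carries $\ribtabs^0_\lambda$ bijectively onto $\PF^0_\alpha$.

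With the bijection in hand the first identity is immediate: substituting $\tilde{T} \leftrightarrow P$ into the Corollary and using $\area(\tilde{T}) = \area(P)$ together with the equivalence $\readword(\tilde{T})\in\shuff(\mu)\Leftrightarrow\readword(P)\in\shuff(\mu)$ rewrites the parking-function sum as the desired sum over $\ribtabs^0_\lambda$. For the second identity I would first record the odd analogue of the Corollary. Starting from the odd Compositional Shuffle Theorem, which produces $\omega\nabla B_\alpha$ with $\shuff(\mu)$ replaced by $\shuff'(\mu)$, and extracting the lowest-degree-in-$q$ terms exactly as in the proof of the Corollary (invoking triangularity of $\nabla$ in the modified Hall-Littlewood basis, and using that $\omega$ preserves $q$-degree), one obtains
\[
\omega\,\Ht_{\lambda'}[X;t]\big|_{m_\mu}=\sum_{P\in\PF^0_\alpha:\ \readword(P)\in\shuff'(\mu)} t^{\area(P)}.
\]
Transporting this across the same bijection yields the second line of \eqref{eq:ribhallprop}.

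The individual steps are routine given the machinery already developed; the only points demanding care are bookkeeping around the reading word and the odd version. I would need to check that the ``top to bottom, right to left'' reading order on $\tilde{T}$ matches $\readword(P)$ on the nose, so that membership in $\shuff(\mu)$ (respectively $\shuff'(\mu)$) is genuinely preserved by the correspondence, and to confirm that the passage to lowest $q$-degree in the odd case is legitimate term by term, i.e.\ that applying $\omega$ does not mix the Hall-Littlewood components in a way that disturbs the diagonal extraction. This friction around $\omega$ and the $\shuff'$ version is the main obstacle, and everything else reduces to the correspondences already verified for Proposition~\ref{prop:mindd}.
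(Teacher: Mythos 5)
Your proposal is correct and matches the paper's own (largely implicit) argument: the paper derives Proposition~\ref{prop:ribbonhall} by exactly this assembly --- the Corollary extracting lowest $q$-degree terms from the Compositional Shuffle Theorem, the reflection-and-shear correspondence identifying $\PF^0_\alpha$ with $\ribtabs^0_\lambda$ via Proposition~\ref{prop:mindd} while matching $\area$ and $\readword$, and the odd ($\shuff'$) form of the theorem for the $\omega$ statement. The points you flag for care (reading-word bookkeeping, and that $\omega$ acts only on the $X$-variables so it commutes with extracting the lowest $q$-degree) are legitimate and resolve as you expect.
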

In particular,
the size of $\mathcal{R}^0_\lambda$ is the
multinomial coefficient 
\begin{equation}
\label{eq:ribbonmulti}
|\mathcal{R}^0_{\lambda}|=\binom{n}
{\lambda'_1, ...,\lambda'_l}
\end{equation}
in the parts of the transposed partition.
For instance, the twelve elements of $\mathcal{R}^0_{3,1}$ corresponding to the parking functions from the example are
are the ones corresponding to the parking
functions in Example \ref{ex:hallparking}.

\subsection{The Artin and descent basis}

\label{subsec:coinvariants}

Let $R_n=\C[\xn]/I_n$ 
where $\xn=(x_1,...,x_n)$, and
\[I_n=\left(e_1(\xn),...,e_n(\xn)\right)\]
is the ideal generated by symmetric polynomials
with vanishing constant term.
The dimension is well known to be $n!$, and the
Hilbert series is given by the $q$-factorial
$\hs_q (R_n)=[n]_q!$. 

Given a permutation $\sigma \in S_n$, we define two monomials in the variables $\xn$:\
\[f_{\sigma}(\xn)=\prod_{i<j:\sigma_i>\sigma_j}
x_{\sigma_i},\quad g_{\sigma}(\xn)=
\prod_{i:\sigma_i>\sigma_{i+1}} x_{\sigma_1}\cdots x_{\sigma_i}.\]
Both sets of monomials form a vector
space basis of $R_n(\xn)$, known as the
Artin basis and Garsia-Stanton descent basis
respectively \cite{garsia1993graded}.
The exponents
\[f_\sigma(\xn)= x_1^{a_1}\cdots x_n^{a_n},\quad
g_\sigma(\xn)= x_1^{b_1}\cdots x_n^{b_n}\]
are precisely the inversion and major index tables \[\invt(\sigma)=(a_1,...,a_n),\quad \majt(\sigma)=(b_1,...,b_n).\]
Reading off the degrees of these coefficients, we recover 
the statement that $\inv,\maj$ are Mahonian.

These two bases may be uniquely characterized as being leading monomials
with respect to different orderings
on the monomials $\xn^\an$. In the case
of the Artin basis, the ordering is simply the lex order, $\xn^\an \leq_{lex} \xn^\bn$
provided that $\an \leq_{lex} \bn$ in the lexicographic ordering on the variables.
We may then say that
$\lm(R_n)=\{\xn^\an:\an \in \invtabs_n\}$
where $\lm_{\leq_{lex}}(R_n)=\lm_{\leq_{lex}}(\Q[\xn]/I_n)$ is the
collection of monomials which are not
in $\lm_{\leq_{lex}}(I_n)$. In other words, the monomials of the for $\xn^\an$ for $\an \in \invtabs_n$ are the ones which are not divisible by the leading monomial of any element of the Gro\"{b}ner basis of $I_n$.

The descent monomials may be defined similarly, for for a different order known as the descent order:
it is defined by $\xn^{\an}\leq_{des} \xn^{\bn}$ if
\begin{enumerate}
    \item $\sort(\an,>) <_{lex} \sort(\bn,>)$ or
\item $\sort(\an)=\sort(\bn)$ and
$\an \leq_{lex} \bn$.
\end{enumerate}
In other words, we first sort $\an$ in descending order to produce a partition
with some trailing zeros, and compare the result in the lex order. We then apply the lex order in $\an,\bn$ to break ties.
Then $\leq_{des}$ is not a monomial order for the purposes of Gr\"{o}bner bases, 
but it satisfies the following useful property: if $S\subset [n]$ is a subset with complement $T$, then we have
\begin{equation}
\label{eq:desprop}
\an|_S \leq_{des} \bn|_S,\ 
\an|_T \leq_{des} \bn|_T
\Rightarrow \an \leq_{des} \bn.
\end{equation}
Moreover, it still makes sense to talk about leading monomials, and we have
$\lm_{\leq_{des}}(R_n)=\{\xn^\an:\an \in \majtabs_n\}$.
In \cite{AllenDescent}, E. Allen gave an explicit algorithm for reducing any monomial in this order.

\subsection{The Garsia-Procesi module}

\label{subsec:garsia1992certain}

Let $a = (a_1,...,a_n) \in \mathbb{Z}_{\geq 0}^n$, and denote $x^a = x_1^{a_1}...x_n^{a_n}$. In \cite{AllenDescent}, it was shown that the collection of monomials $ \{ x^{\text{majt}(\tau)} : \tau \in S_n \}$ is a basis for the coinvariant algebra 

$$ R_n = \frac{\mathbb{C}[x_1,...,x_n]}{I_n}$$

where $I_n = \langle e_1(\mathbf{x}),...,e_n(\mathbf{x})\rangle$, viewed as a $\mathbb{C}$-vector space. The coinvariant algebra is well known to be isomorphic as $S_n$-modules to the cohomology ring of the flag variety, $H^*(Fl_n)$. 

There is a sort of generalization of the coinvariant algebra to partitions of $n$. Let $\lambda \vdash n$, and denote the conjugate partition by $\lambda' = (\lambda'_1 \geq ... \geq \lambda'_n \geq 0)$, where we pad $\lambda'$ with $0$'s until we reach a tuple of length $n$. Denote $p_m^n(\lambda) := \lambda'_n + ... + \lambda'_{n-m+1}$. If $S \subset \{x_1,...,x_n\}$, denote

$$ e_d(S) = \sum_{\substack{i_1 < ... < i_d \\ x_{i_j} \in S}}x_{i_1}...x_{i_d}$$
to be the sum of all squarefree monomials of degree $d$ in $S$. Then, the \emph{Tanisaki Ideal} $I_\lambda$ is defined to be

$$ I_\lambda = \bigg\langle e_d(S) : S \subseteq \{x_1,...,x_n\}, d > |S|-p^n_{|S|}(\lambda) \bigg\rangle$$

and the \emph{Garsia-Procesi module} $R_\lambda$ (\cite{garsia1992certain}) is defined to be $R_\lambda := \mathbb{C}[x_1,...,x_n]/I_\lambda$. The corresponding geometric object is known as the \emph{Springer fiber} $Sp_\lambda$, and the Garsia-Procesi module is well known to be isomorphic as $S_n$-modules to the cohomology ring $H^*(Sp_\lambda)$ under the star action.

\section{Descent basis for the Garsia-Procesi module}

In this section we define our proposed basis of 
$\gpring_\lambda$.

\subsection{Description of the basis}

We define the indexing set for our basis of
$\gpring_\lambda$. 
\begin{defn}
Let $\lambda=(\lambda_1,...,\lambda_l)$ be a partition of $n$, or more generally a weak composition. We define a subset
$\majtabs_\lambda\subset S_n$ by
\begin{equation}
    \label{eq:defdlambda}
\begin{split}
\majtabs_\lambda & = 
\bigcup_{\an_1,...,\an_l}
\shuff\left(\an_1,...,\an_l\right) 
=\bigcup_{(A_1|\cdots |A_l) \in \osp(\lambda)}
\left\{ \an: \an|_{A_i}\in \majtabs_{\lambda_i}\right\}.
\end{split}
\end{equation}
ranging over all $l$-tuples 
$(\an_1,...,\an_l)\in \majtabs_{\lambda_1}\times \cdots \times \majtabs_{\lambda_l}$.
\end{defn}
In other words, we shuffle together all $l$-tuples of 
descent compositions with sizes given by the parts of $\lambda$.
For instance, we would have
\[\majtabs_{(3,1)}=\{0000, 0001, 0010, 0011, 0012,    0100, 0101, 0102, 0110, 0120,
    1001,1010\}.\]
Notice that the union in \eqref{eq:defdlambda} is not disjoint, otherwise the size would always be $n!$.

In fact, we have that $\majtabs_{\lambda}$ is a subset of $\majtabs_n$:
\begin{lem}
\label{lem:shuffmaj}
Every shuffle of descent compositions is also a descent composition. In other words, $\majtabs_{\lambda}\subset \majtabs_{n}$.
\end{lem}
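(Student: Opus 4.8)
The plan is to give an intrinsic characterization of the descent compositions $\majtabs_n$ as the set of compositions $\an=(a_1,\dots,a_n)$ satisfying two conditions that are visibly compatible with shuffling, and then to check that these conditions pass from the blocks of an ordered set partition to all of $[n]$.

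First I would record the following description of $\majtabs_n$, which reads off directly from the run decomposition used to define $\majt$. Writing $L_v=\{j:a_j=v\}$ for the level sets of a composition $\an$, I claim that $\an\in\majtabs_n$ if and only if (i) the values $\{a_1,\dots,a_n\}$ form an initial segment $\{0,1,\dots,M\}$ of $\mathbb{Z}_{\ge 0}$, and (ii) $\max L_v>\min L_{v-1}$ for every $1\le v\le M$. Indeed, given such an $\an$ one builds $\sigma$ by concatenating the increasing runs $L_M,L_{M-1},\dots,L_0$; condition (i) guarantees each run is nonempty, and condition (ii) guarantees precisely that a descent occurs at each run boundary, so that these are the genuine runs of $\sigma$ and $\majt(\sigma)=\an$. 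Conversely $\majt(\sigma)$ always has this form, since $a_i=r-k$ when the value $i$ lies in the $k$-th of the $r$ runs. Both (i) and (ii) depend only on the relative order of the indices, so they are unaffected by the order-preserving relabeling identifying $A_i$ with $\{1,\dots,\lambda_i\}$; thus $\an|_{A_i}\in\majtabs_{\lambda_i}$ means exactly that (i) and (ii) hold for the restriction of $\an$ to $A_i$.

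Now let $\an\in\majtabs_\lambda$, realized through an ordered set partition $(A_1|\cdots|A_l)\in\osp(\lambda)$ with $\an|_{A_i}\in\majtabs_{\lambda_i}$ for each $i$. Condition (i) for $\an$ is immediate: the global value set is the union of the block value sets, each an initial segment containing $0$, so the union is again an initial segment. The substantive step is condition (ii). Fix $v$ with $1\le v\le M$ and let $j^\ast=\max L_v$, lying in some block $A_{i^\ast}$. Since $A_{i^\ast}$ contains an entry equal to $v$, its own value set is an initial segment reaching at least $v$, so by block-contiguity the value $v-1$ also occurs in $A_{i^\ast}$. Applying condition (ii) for the block $A_{i^\ast}$ at level $v$ gives $j^\ast=\max(L_v\cap A_{i^\ast})>\min(L_{v-1}\cap A_{i^\ast})\ge \min L_{v-1}$, where the first equality holds because $j^\ast$ is the global, hence also the block, maximum among value-$v$ indices, and the last inequality because the block minimum dominates the global minimum. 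Thus $\max L_v>\min L_{v-1}$, which is condition (ii) for $\an$, and therefore $\an\in\majtabs_n$.

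I expect the only real content to be the max/min comparison in the last paragraph; everything else is bookkeeping. The one point that must not be skipped is the use of block-contiguity to ensure that the block attaining the global maximum at level $v$ also contains level $v-1$, since that is exactly what makes the block version of (ii) applicable. If one preferred to avoid spelling out the characterization (i)--(ii), the same argument can be phrased on the parking-function side via Lemma \ref{lem:dinvzero}, but there the translation between area-sequence indices and labels makes the bookkeeping heavier, so I would keep to the compositional description above.
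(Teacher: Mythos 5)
Your proof is correct, and in fact it cannot be ``the same as the paper's'' because the paper states Lemma \ref{lem:shuffmaj} with no proof at all (it is treated as known, being essentially the classical fact that major index tables are closed under shuffling); your argument therefore supplies a complete justification rather than paralleling an existing one. Your intrinsic characterization of $\majtabs_n$ is a faithful restatement of the run-based definition of $\majt$: the level set $L_v$ is exactly the set of values lying in the run at level $v$, a descent composition is recovered by concatenating $L_M,L_{M-1},\dots,L_0$ each in increasing order, condition (i) says every level $0,\dots,M$ is occupied (all $r$ runs nonempty), and condition (ii), $\max L_v>\min L_{v-1}$, encodes precisely the descent at each run boundary that makes the blocks genuine maximal runs. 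The heart of your argument is also sound: taking $j^\ast=\max L_v$ in its block $A_{i^\ast}$, the initial-segment property of the block's value set guarantees $L_{v-1}\cap A_{i^\ast}\neq\emptyset$, and then $\max L_v=\max\left(L_v\cap A_{i^\ast}\right)>\min\left(L_{v-1}\cap A_{i^\ast}\right)\geq\min L_{v-1}$, where the first equality uses that a subset containing the global maximum has the same maximum; your remark that (i)--(ii) are invariant under the order-preserving identification of $A_i$ with $\{1,\dots,\lambda_i\}$ is exactly the point needed to interpret $\an|_{A_i}\in\majtabs_{\lambda_i}$ correctly. Beyond filling the gap, your criterion has side benefits consistent with the rest of the paper: it gives a membership test for $\desc_n$ that does not require producing $\sigma$, it makes transparent why the union in \eqref{eq:defdlambda} is not disjoint (the blocks of an ordered set partition are not recoverable from $\an$ alone in general), and the level data $L_v$ is essentially what Algorithm \ref{alg:ray} manipulates through the extended sequence $(\tilde{a}_1,\tilde{a}_2,\dots)$. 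Your suggested alternative route through Lemma \ref{lem:dinvzero} and the bijection $S_n\leftrightarrow\PF_n^0$ would also work, but as you say it only shifts the same bookkeeping onto area sequences, so the compositional formulation you chose is the cleaner one. The only edge cases worth a word in a final write-up are $M=0$ (where (ii) is vacuous) and empty blocks when $\lambda$ is a weak composition (which contribute nothing), both trivially harmless.
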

Thus, we can make the following definition:
\begin{defn}
Let $\lambda$ be a partition of $n$, or more generally a weak composition. We define a subset
$J_\lambda^{\maj}\subset S_n$ by
\begin{equation}
    \label{eq:defjlambdamaj}
J_\lambda^{\maj}=\left\{\majt^{-1}(\an):
\an \in \majtabs_\lambda\right\}
\end{equation}
\end{defn}

\begin{defn}
\label{def:raybas}
We define a collection of monomials by
\begin{equation}
\label{eq:raybas}
\raybas_{\lambda}=\left\{\xn^{\an}: \an\in \desc_\lambda\right\}=\left\{g_\tau(\xn): \tau \in \jmaj_{\lambda}\right\}.
\end{equation}
If we specify another weak composition $\mu$, we define
\begin{equation}
\raybas_{\lambda,\mu}=\left\{\antisym_{\mu} g_{\tau}(\xn):
\tau \in \jmaj_{\lambda} \cap \shuff'_{\mu}\right\},
\end{equation}
recalling that $\antisym_{\mu}$ is the antisymmetrizing element of $\mathbb{C}[S_{\mu}]$, and $\shuff'_{\mu}$ is the collection of reverse shuffles.
\end{defn}

For instance, we would have
\begin{equation}\label{eq:basex}
\begin{split}
\raybas_{(3,2),(2,2,1)}=&
\{x_1x_3-x_1x_4-{x_3}{x_2}+{x_2}{
x_4},\\
&{x_1}{x_3}{x_5}-{x_1}{x_4}{x_5}-{
x_2}{x_3}{x_5}+{x_2}{x_4}{x_5}, \\
& {{x_3}}^{2}{x_1}-{{x_4}}^{2}{x_1}-{{x_3}}^{2}{x_2}+{{x_4}}^{2}{x_2},\\
&{{x_3}}^{2}{x_1}{x_5}-{{x_4}}^{2}{x_1}{
x_5}-{{x_3}}^{2}{x_2}{x_5}+{{x_4}}^{2}{x_2}{x_5},\\
& {{x_5}}^{2}{x_1}{x_3}-{{x_5}}^{2}{x_1}{x_4}-{{ 
x_5}}^{2}{x_2}{x_3}+{{x_5}}^{2}{x_2}{x_4}\}.
\end{split}
\end{equation}

\begin{rem}
We could also consider a similar set $\mathcal{B}^{\inv}_\lambda$, defined as above but with $\artin_{n}$ in place of $\desc_{n}$, and $\inv^{-1}$ in place of $\majt^{-1}$. In fact, this is not a new set, and it turns out to be the the same as the basis studied in \cite{garsia1993graded}, which are characterized in terms of sub-Yamanouchi words.
\end{rem}

We may now state our first main result:

\begin{thm}
\label{thm:majbasis}
Let $\lambda$ be a partition of $n$. Then we have that $\raybas_{\lambda}$ determines a basis of $\gpring_{\lambda'}$. Moreover,
\begin{enumerate}
    \item The elements of $\raybas_{\lambda}$ 
    are the leading terms in the descent ordering,
    meaning that
    $LT_{des}(\ghideal_{\lambda'})$ is the set of all monomials not in $\raybas_{\lambda}$.
\item If $S_{\mu}\subset S_n$ is a Young subgroup, then 
$\raybas_{\lambda,\mu}$ is a basis of $\gpring_{\lambda',\mu}=\antisym_{\mu} \gpring_{\lambda'}$.
\end{enumerate}
\end{thm}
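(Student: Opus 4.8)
The plan is to decouple the two requirements of a basis. Since $\gpring_{\lambda'}$ has dimension equal to the multinomial coefficient $\binom{n}{\lambda'_1,\dots,\lambda'_h}$ in the parts of $\lambda'$, it suffices to prove that $\raybas_\lambda$ is linearly independent and that $|\raybas_\lambda|=|\desc_\lambda|$ equals that same number; spanning then follows automatically. The count is the genuinely hard step, since by \eqref{eq:defdlambda} the set $\desc_\lambda$ is a manifestly \emph{non-disjoint} union and has no transparent cardinality. My route around this is to set up a bijection $\desc_\lambda\leftrightarrow\ribtabs^{0}_{\lambda}$: using Lemma \ref{lem:shuffmaj} to view each element of $\desc_\lambda$ as an honest descent composition, reading off area sequences and reading words reverses the bijection $S_n\leftrightarrow\PF^0_n$ recalled after Lemma \ref{lem:dinvzero}, and one checks that the block structure of a shuffle in \eqref{eq:defdlambda} matches condition \ref{item:minribdinv} of Definition \ref{def:minrib}. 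Granting this bijection, Proposition \ref{prop:ribbonhall} and in particular \eqref{eq:ribbonmulti} deliver $|\desc_\lambda|=|\ribtabs^{0}_{\lambda}|=\binom{n}{\lambda'_1,\dots,\lambda'_h}$.

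For linear independence I would introduce the map
\[
\Phi=\bigoplus_{(A_1|\cdots|A_l)\in\osp(\lambda)}\psi_B,\qquad \psi_B\colon\gpring_{\lambda'}\longrightarrow \gpring_{A_1}\otimes\cdots\otimes\gpring_{A_l},
\]
where $\gpring_{A_i}$ is the coinvariant algebra in the variables $\{x_j:j\in A_i\}$ and $\psi_B$ is induced by grouping the variables according to the ordered set partition $B=(A_1|\cdots|A_l)$. The first task is to verify that each $\psi_B$ annihilates the Tanisaki ideal $I_{\lambda'}$, so that $\Phi$ is well defined; this is a direct check on the generators $e_d(S)$, using that each elementary symmetric function in a full block of variables already vanishes in $\gpring_{A_i}$. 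Independence of $\raybas_\lambda$ then follows by triangularity: to each $\an\in\desc_\lambda$ I attach a witnessing partition $B(\an)$ realizing it as a shuffle, so that $\psi_{B(\an)}(\xn^{\an})$ is a tensor product of Garsia--Stanton descent monomials, which form a basis of the factors. The defining property \eqref{eq:desprop} of the descent order guarantees that restricting to the blocks of $B(\an)$ preserves $\leq_{des}$, so that $\xn^{\an}$ is the $\leq_{des}$-leading contribution in its component and the collisions coming from distinct witnessing partitions cannot cancel it; this makes the transition unitriangular and forces independence.

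Item (1) comes out of the same analysis. Because $\psi_B$ is assembled from restrictions compatible with $\leq_{des}$ via \eqref{eq:desprop}, each monomial $\xn^{\an}$ with $\an\in\desc_\lambda$ is a leading monomial that survives in $\gpring_{\lambda'}$, hence lies outside $LT_{des}(I_{\lambda'})$. Since the standard monomials for $\leq_{des}$ always span and we have already matched the count, the monomials $\{\xn^{\an}:\an\in\desc_\lambda\}$ must be \emph{exactly} the standard monomials, which is precisely the assertion that $LT_{des}(I_{\lambda'})$ is their complement.

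For the parabolic statement (2) I would run the same argument after applying the antisymmetrizer $\antisym_\mu$. Restricting to $\tau\in\jmaj_\lambda\cap\shuffa(\mu)$ selects one maximal representative per coset of $S_\mu\backslash S_n$, so the elements $\antisym_\mu g_\tau$ are nonzero and, by pushing $\Phi$ into the $S_\mu$-antisymmetric parts of the tensor factors, inherit the same unitriangular independence as above. Finally the count is supplied by the second line of \eqref{eq:ribhallprop}: the $\shuffa$-refined form of Proposition \ref{prop:ribbonhall} computes $\omega\Ht_{\lambda'}[X;t]\big|_{m_\mu}$, whose evaluation records $\dim\antisym_\mu\gpring_{\lambda'}$, so that $|\jmaj_\lambda\cap\shuffa(\mu)|$ is exactly the dimension of $\antisym_\mu\gpring_{\lambda'}$. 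The main obstacle throughout is the cardinality of $\desc_\lambda$: everything else is triangular bookkeeping against $\leq_{des}$, but the size is accessible only after translating the non-disjoint union into the ribbon model and importing the compositional shuffle theorem through Proposition \ref{prop:ribbonhall}.
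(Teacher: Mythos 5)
Your architecture is the paper's: independence via a map into tensor products of coinvariant algebras (your $\Phi$ is essentially the paper's $\varphi_\lambda$ from \eqref{eq:tanimap}), triangularity against $\leq_{des}$ using \eqref{eq:desprop}, the cardinality via the ribbon model $\ribtabs^0_\lambda$ and the compositional shuffle theorem, and the parabolic case by antisymmetrizing reverse shuffles and citing the second line of \eqref{eq:ribhallprop}. But there is a genuine gap at exactly the step you yourself flag as ``the genuinely hard step'' and then dispatch with ``one checks'': the claimed bijection $\desc_\lambda \leftrightarrow \ribtabs^0_\lambda$. In one direction, to build a tableau from $\an\in\desc_\lambda$ you must choose a witnessing ordered set partition; any witness yields an element of $\ribtabs_\lambda$ satisfying the within-block condition (item \ref{item:isribbon} of Proposition \ref{prop:mindd}), but membership in $\ribtabs^0_\lambda$ additionally requires the \emph{cross-block} dinv conditions of Definition \ref{def:minrib} --- each box in exactly one dinv pair with each earlier component, none involving the bottom row --- and a generic witness violates these. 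Since the union \eqref{eq:defdlambda} is non-disjoint there is no canonical witness, so your map is not well defined as described. In the other direction, the height map $\Psi_\lambda$ visibly lands in $\desc_\lambda$, but its injectivity on $\ribtabs^0_\lambda$ --- that the block structure of a minimal tableau is recoverable from $\an$ alone --- is precisely what the paper has to work for: it constructs a reconstruction procedure (Algorithm \ref{alg:ray} and its tableau form) and proves correctness via the five forbidden patterns of Lemma \ref{lem:patterns} together with the two-phase ascending/descending analysis; nothing in your proposal substitutes for this. Note too that the paper never proves surjectivity directly: it squeezes, combining injectivity of $\Psi_\lambda$ (Proposition \ref{prop:injective}), which gives $\binom{n}{\lambda'}\leq|\desc_\lambda|$, against the independence bound $|\desc_\lambda|\leq\binom{n}{\lambda'}$ of Corollary \ref{cor:leq}. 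You should argue the same way, since an honest two-sided bijection is strictly harder than what is needed.

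Two secondary points, both repairable. First, your verification that each $\psi_B$ annihilates $I_{\lambda'}$ ``using that each elementary symmetric function in a full block of variables already vanishes'' does not meet the generators: these are $e_d(\xn_T)$ for \emph{arbitrary} subsets $T$ straddling the blocks, with degree thresholds set by the Tanisaki bounds. The paper's Lemma \ref{lem:tanimap} needs the convolution identity \eqref{eq:tanisakisplit1} and case-by-case degree estimates, and proceeds by peeling off one block at a time into $R_{\lambda_1}\otimes I_{\mu'}$; its independence proof (Proposition \ref{prop:independence}) is correspondingly an induction on the number of parts rather than your one-shot unitriangularity. Second, in that triangularity step your phrase ``collisions coming from distinct witnessing partitions cannot cancel it'' addresses the wrong interference: the danger is not other witnesses of $\an$ but contributions from other $\bn\leq_{des}\an$ under the fixed map $\psi_{B(\an)}$, since reduction in a factor modulo the coinvariant ideal introduces $\leq_{des}$-lower descent monomials; this is exactly what the paper controls with \eqref{eq:linereduction0} before invoking \eqref{eq:desprop}. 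These can be patched along the paper's lines; the missing injectivity argument cannot, and without it the dimension count --- hence the basis claim and both enumerated items --- does not close.
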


\begin{example}
We have that
\[\omega \Ht_{(2,2,1)}[X;t]=
{t}^{4}{m}_{{3,2}}+ \left( {t}^{4}+{t}^{3} \right) {m}_{{3,
1,1}}+ \left( 2{t}^{4}+2{t}^{3}+{t}^{2} \right) {m}_{{2,2,1}}
+ \]
\[\left( 3{t}^{4}+5{t}^{3}+3{t}^{2}+t \right) {m}_{{2,1,1,1
}}+ \left( 5{t}^{4}+11{t}^{3}+9{t}^{2}+4t+1 \right) {m}_{
{1,1,1,1,1}},
\]
with the coefficient of $m_{\mu}$ corresponding to 
the graded dimension of 
$\antisym_{\mu} \gpring_{(2,2,1)}$. On the other hand, we see that the coefficient $2t^4+2t^3+t^2$ of $m_{2,2,1}$ matches the graded number of basis elements  from \eqref{eq:basex}.
\end{example}

In particular, taking the coefficient of the monomial $m_{1^n}$ from $\Ht_{\lambda}[X;t]$
gives a formula for the size of $\jmaj_{\lambda}$, which turns out to be surprisingly far from obvious:
\begin{equation}
    \label{eq:jsize}
   |\jmaj_{\lambda}|=\binom{n}{\lambda'}=
    \frac{n!}{\lambda'_1! \cdots \lambda'_h!}.
\end{equation}

\subsection{Proof of linear independence}

We begin by proving linear independence, with the help of an injective map $\gpring_\lambda$ into a an explicit direct sum of tensor products of coinvariant algebras.

For any $S=\{i_1,...,i_k\}\subset \{1,...,n\}$, 
we define a map \begin{equation}
\label{eq:rtensmap}
\varphi_S : R \rightarrow \C[x_1,...,x_k]\otimes \C[x_1,...,x_{n-k}]\rightarrow R_{k} \otimes \C[x_1,...,x_{n-k}].
\end{equation}
The first map evaluates $x_{i_j}=x_j$
assuming the elements of $S$ are written in increasing order, and similarly for the complementary indices. 
The second map is the quotient map 
on the first factor.
For instance, for $n=7$, we would have
\[\varphi_{\{2,3,7\}} (x_1^2x_2x_4^3x_6)=
x_1\otimes x_1^2x_2^3x_4=(-x_2-x_3) \otimes x_1^2x_2^3x_4.\]

Then we have
\begin{lem}
\label{lem:tanimap}
Let $\lambda$ be a partition of length
$l>0$ and let
$\mu=(\lambda_2,...,\lambda_l)$
be the result of removing the first row.
For any subset $S\subset \{1,...,n\}$ of size
$k=\lambda_1$, the image of $I_{\lambda'}$
under $\varphi_{S}$ is contained in $R_k\otimes I_{\mu'}$.
\end{lem}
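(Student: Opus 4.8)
The plan is to reduce to the Tanisaki generators and then bookkeep degrees in the second tensor factor. Since $\varphi_S$ is a ring homomorphism and $R_k\otimes I_{\mu'}$ is an ideal of $R_k\otimes \C[x_1,\dots,x_{n-k}]$, it suffices to show $\varphi_S(e_d(U))\in R_k\otimes I_{\mu'}$ for each Tanisaki generator $e_d(U)$ of $I_{\lambda'}$, i.e. each $U\subseteq\{x_1,\dots,x_n\}$ with $d>|U|-p^n_{|U|}(\lambda')$. Writing $U=U_S\sqcup U_T$ for the variables indexed by $S$ and by its complement, and setting $a=|U_S|$, $b=|U_T|$, with images $V_S\subseteq\{x_1,\dots,x_k\}$ and $V_T\subseteq\{x_1,\dots,x_{n-k}\}$ under the evaluation map, multiplicativity of elementary symmetric functions over disjoint alphabets gives
\[
\varphi_S(e_d(U))=\sum_{q=0}^{d}\overline{e_{d-q}(V_S)}\otimes e_q(V_T),
\]
where the bar denotes the class in $R_k$ and the $q$-th summand is exactly the part of degree $q$ in the second factor. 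Because $I_{\mu'}$ is homogeneous with no constant term, I would check membership in $R_k\otimes I_{\mu'}$ degree-by-degree: every nonzero summand with $q\ge 1$ must have $e_q(V_T)\in I_{\mu'}$, and the $q=0$ summand must vanish.

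The combinatorial heart will be a comparison of the two ``$p$''-functions. Since $k=\lambda_1$ is the largest part, removing the first row identifies the multiset of parts of $\lambda$ (padded to length $n$) with that of $\mu$ (padded to length $n-k$) together with one copy of $\lambda_1$ and $k-1$ additional zeros. Summing smallest parts then yields
\[
p^n_m(\lambda')=p^{n-k}_{\,m-k+1}(\mu')\qquad(0\le m\le n-1),
\]
with the convention that the right-hand side is $0$ for nonpositive subscript, while $p^n_n(\lambda')=n$. I would also record that $p^{n-k}_b(\mu')\le b$ for all $b$ — equivalently, that $I_{\mu'}$ has no constant generator — which holds because the $b$ smallest among the $n-k$ parts (summing to $n-k$) have sum at most $b$.

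A summand is nonzero only when $e_{d-q}(V_S)\ne 0$, forcing $q\ge d-a$, so I would split on whether $V_S$ is a proper subset. If $a<k$, I would feed the generator inequality through the identity at $m=u:=a+b\le n-1$ to get $d-a>b-p^{n-k}_{a+b-k+1}(\mu')$; since $a+b-k+1\le b$ and $p^{n-k}_{\bullet}(\mu')$ is nondecreasing, the right side is $\ge b-p^{n-k}_b(\mu')$, so every surviving index satisfies $q\ge d-a>b-p^{n-k}_b(\mu')$, making each $e_q(V_T)$ a generator of $I_{\mu'}$; moreover $d-a>b-p^{n-k}_b(\mu')\ge 0$ excludes a constant term. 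If $a=k$, then $V_S=\{x_1,\dots,x_k\}$ and $\overline{e_p(V_S)}=0$ in $R_k$ for all $p\ge1$, so only the $q=d$ summand survives; applying the identity (or, when $u=n$, the equality $p^n_n(\lambda')=n$ directly) together with the fact that a single part of $\mu$ is at most $\lambda_1=k$ gives $d>b-p^{n-k}_b(\mu')$, placing $e_d(V_T)$ in $I_{\mu'}$.

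The main obstacle will be precisely this borderline bookkeeping: guaranteeing that the smallest surviving second-factor degree $q=d-a$ clears the threshold $b-p^{n-k}_b(\mu')$ that cuts out the generators of $I_{\mu'}$, while simultaneously ruling out a stray constant term. Both points rest on the shift-by-$\lambda_1$ identity and, in the tight case $a=k$, on the vanishing of the full elementary symmetric polynomials in $R_k$; the degenerate case $u=n$ must be handled by hand, since the identity fails there.
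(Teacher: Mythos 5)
Your proposal is correct and takes essentially the same route as the paper's proof: you expand the Tanisaki generator over the two alphabets exactly as in \eqref{eq:tanisakisplit1}, split into the same two cases $a=k$ (the paper's $S\subseteq T$) and $a<k$, and your shift identity $p^n_m(\lambda')=p^{n-k}_{m-k+1}(\mu')$ together with the bound $\mu_j\le\lambda_1$ is just a packaged form of the paper's inline index manipulation (including its use of $\lambda_1\ge\lambda_{n-m+1}$ and the monotonicity of the tail sums). The only cosmetic differences are that you argue directly where the paper argues by contradiction, and that you isolate the degenerate case $u=n$, which the paper's index arithmetic absorbs without special comment.
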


\begin{proof}

Suppose that $e_d(\xn_T)$ is a generator 
of $I_{\lambda'}$, where
$T\subset \{1,...,n\}$ is a subset of size $m$, 
and we are denoting
$\xn_T=\{x_{i}:i\in T\}$. 
By the description of the Tanisaki ideal, this means that
\begin{equation}
\label{eq:tanisakigen}
%I_{\lambda'}=\bigg\langle e_d(x_{i_1},...,x_{i_m}) :
d>m-(\lambda_{n-m+1}+\lambda_{n-m+2}+\cdots)%\bigg\rangle
\end{equation}
%We must show that $\varphi(e_d(\xn_T))$ 
%is contained in $R_k\otimes R_{\mu'}$.
In fact, we will show that every term in the identity
    \begin{equation}\label{eq:tanisakisplit1}
        e_d(\xn_T) = \sum_{i} e_i(\xn_{S \cap T})e_{d- i}(\xn_{T\cap S'}),
    \end{equation}
is mapped into $R_k\otimes I_{\mu'}$ under $\varphi_S$, where $S'$ is the complement of $S$. Since the property that $e_i(\xn_{T\cap S})$ and $e_{d-i}(\xn_{T\cap S'})$ are generators in the ideals $I_k$ and $I_{\mu'}$ depends only the sizes
$l=|T\cap S|$ and $m-l=|T\cap S'|$,
it suffices to show that for any $i,l$, 
either 
$e_i(x_1,...,x_l) \in I_k$ or $e_{d-i}(x_1,...,x_{m-l})\in R_{\mu'}$.

We do this in two cases: suppose first that
$S\subset T$, so that $l=k$. Then every 
$e_i(x_1,...,x_l)$ for $i\geq 1$ is in $I_k$, so we may suppose that $i=0$.
Now if $e_{d}(x_1,...,x_{m-l})\notin I_{\mu'}$, we must have that
\[d\leq m-k-(\mu_{n-m+1}+\mu_{n-m+2}+\cdots)=\]
\[m-(\lambda_1+\lambda_{n-m+2}+\lambda_{n-m+3}+\cdots),\]
by inserting $\lambda=\mu$, $m=m-k$, $n=n-k$ into \eqref{eq:tanisakigen}. But this contradicts \eqref{eq:tanisakigen}, noting that 
$\lambda_1\geq \lambda_{n-m+1}$.

In the second case, suppose that $T$ does not contain $S$, so that $l<k$. Then if
 $e_{d-i}(x_1,...,x_{m-l})\notin I_{\mu'}$, we have
\[d-i\leq m-l-(\mu_{(n-k)-(m-l)+1}+\mu_{(n-k)-(m-l)+2}+\cdots)\leq \]
\[m-l-(\mu_{n-m}+\mu_{n-m+1}+\cdots)=\]
\[m-l-(\lambda_{n-m+1}+\lambda_{n-m+2}+\cdots).\]
Then we obtain that $i>l$, so that $e_i(x_1,...,x_l)=0$.
\end{proof}

\begin{cor}
We have a well-defined map
    \begin{equation}
\label{eq:tanimap}
\varphi_{\lambda}: R_{\lambda'}
\rightarrow \bigoplus_{(A_1|\cdots|A_l) \in 
\osp(\lambda)}
R_{\lambda_1}\otimes\cdots \otimes R_{\lambda_l}
\end{equation}
by reordering the variables according to the parts of $A_i$ as in the definition of $\varphi_S$, and reducing by $I_{\lambda_i}$ on each factor.
\end{cor}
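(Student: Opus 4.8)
The plan is to deduce the Corollary from Lemma \ref{lem:tanimap} by induction on the length $l=l(\lambda)$, peeling off one tensor factor at a time. Since a map into a finite direct sum is the same data as a collection of maps into each summand, it suffices to show that for each fixed ordered set partition $(A_1|\cdots|A_l)\in\osp(\lambda)$ the associated component map
\[
\varphi_{(A_1|\cdots|A_l)}\colon \C[\xn]\longrightarrow R_{\lambda_1}\otimes\cdots\otimes R_{\lambda_l},
\]
defined by evaluating the variables indexed by $A_i$ (in increasing order) into the $i$th factor and then reducing modulo $I_{\lambda_i}$, descends to a well-defined map on $R_{\lambda'}$; that is, that it vanishes on $I_{\lambda'}$. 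The map $\varphi_\lambda$ is then the direct sum of these component maps over $\osp(\lambda)$.

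For the base case $l=1$ we have $\lambda=(n)$ and $\lambda'=(1^n)$, so $I_{\lambda'}=I_n$ and the single component map is the quotient $\C[\xn]\to R_n=R_{\lambda_1}$, which vanishes on $I_n$ by definition. For the inductive step, write $\mu=(\lambda_2,\dots,\lambda_l)$ and $k=\lambda_1$, and factor the component map as
\[
\varphi_{(A_1|\cdots|A_l)}=\bigl(\mathrm{id}_{R_{\lambda_1}}\otimes\,\psi\bigr)\circ\varphi_{A_1},
\]
where $\varphi_{A_1}\colon\C[\xn]\to R_k\otimes\C[x_1,\dots,x_{n-k}]$ is the map \eqref{eq:rtensmap} attached to $S=A_1$, and $\psi\colon\C[x_1,\dots,x_{n-k}]\to R_{\lambda_2}\otimes\cdots\otimes R_{\lambda_l}$ is the component map for the partition $\mu$ attached to the ordered set partition $(\bar A_2|\cdots|\bar A_l)$ obtained by relabeling $\{1,\dots,n\}\setminus A_1$ as $\{1,\dots,n-k\}$ in increasing order. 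By Lemma \ref{lem:tanimap} we have $\varphi_{A_1}(I_{\lambda'})\subseteq R_k\otimes I_{\mu'}$, while the inductive hypothesis applied to $\mu$, which has length $l-1$, gives that $\psi$ vanishes on $I_{\mu'}$, hence $\mathrm{id}_{R_{\lambda_1}}\otimes\psi$ vanishes on $R_k\otimes I_{\mu'}$. Composing, $\varphi_{(A_1|\cdots|A_l)}$ vanishes on $I_{\lambda'}$, as required.

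The induction is legitimate because removing the first (largest) part of a partition again yields a partition, so each application of Lemma \ref{lem:tanimap} genuinely peels off the largest remaining part, which is exactly the hypothesis the Lemma relies on through the inequality $\lambda_1\geq\lambda_{n-m+1}$ in its proof. The only real bookkeeping is the relabeling step: I must check that the sets $A_2,\dots,A_l$, which live inside the complement of $A_1$, become under the increasing relabeling a genuine ordered set partition $(\bar A_2|\cdots|\bar A_l)$ of $\{1,\dots,n-k\}$ of type $\mu$, and that the composite evaluation indeed agrees with the direct evaluation of the original variables into the $l$ factors. This is routine but is the one place where care is needed, so I expect it to be the main, though minor, obstacle; the ring-theoretic content is entirely supplied by the Lemma.
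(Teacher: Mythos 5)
Your proof is correct and follows the route the paper intends: the paper states this corollary without a separate proof, treating it as an immediate consequence of Lemma \ref{lem:tanimap} applied repeatedly, one tensor factor at a time, which is exactly your induction on $l(\lambda)$ (including the observation that removing the first row of a partition again yields a partition, so the lemma's hypothesis persists). Your explicit handling of the relabeling of the complementary variables is a routine but welcome bit of care that the paper leaves implicit.
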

We will see that the images of $\raybas_{\lambda}$ under $\varphi_{\lambda}$ are linearly independent. Once it is shown that they make up a basis, it will follow that $\varphi_\lambda$ is injective.

We can now prove independence:

\begin{prop}
\label{prop:independence}
Let $\an \in \majtabs_{\lambda}$, and suppose that
\begin{equation}
\label{eq:basisproof1}
\sum_{\bn \leq_{des} \an} c_{\bn} \xn^{\bn} \in I_{\lambda'}
\end{equation}
for some coefficients $c_{\bn}$. Then we must
have $c_{\an}=0$. 
\end{prop}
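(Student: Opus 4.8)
The plan is to combine the structural map $\varphi_\lambda$ from Lemma~\ref{lem:tanimap} with the key compatibility property \eqref{eq:desprop} of the descent order. The essential observation is that $\an \in \majtabs_\lambda$ means, by \eqref{eq:defdlambda}, that there is an ordered set partition $(A_1|\cdots|A_l)\in\osp(\lambda)$ with $\an|_{A_i}\in \majtabs_{\lambda_i}$ for each $i$. I would fix this witnessing set partition and look only at the corresponding summand of the target of $\varphi_\lambda$, namely the component $R_{\lambda_1}\otimes\cdots\otimes R_{\lambda_l}$ indexed by $(A_1|\cdots|A_l)$. Applying $\varphi_\lambda$ to the relation \eqref{eq:basisproof1} and projecting onto this one summand gives an identity inside $R_{\lambda_1}\otimes\cdots\otimes R_{\lambda_l}$, since by the Corollary the whole left-hand side of \eqref{eq:basisproof1}, being in $I_{\lambda'}$, maps to zero.

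First I would describe the image of a single monomial $\xn^{\bn}$ under the projection to the chosen summand: it factors as $\xn^{\bn|_{A_1}}\otimes\cdots\otimes\xn^{\bn|_{A_l}}$ (after the relabelling of variables), reduced modulo $I_{\lambda_i}$ on each tensor factor. The crucial point is that on each factor $R_{\lambda_i}$, the Garsia--Stanton descent monomials $\{\xn^{\cn}:\cn\in\majtabs_{\lambda_i}\}$ form a basis and are precisely the leading terms for the descent order on $\lambda_i$ variables, as recalled in Section~\ref{subsec:coinvariants}. So the image $\xn^{\an|_{A_1}}\otimes\cdots\otimes\xn^{\an|_{A_l}}$ of the top monomial is a genuine basis element of the tensor product, because each $\an|_{A_i}\in\majtabs_{\lambda_i}$ is a descent composition for that factor.

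The heart of the argument is then a triangularity claim: among all $\bn\leq_{des}\an$ appearing in \eqref{eq:basisproof1}, I would argue that the only one whose projection to the chosen summand can contribute a nonzero multiple of the specific basis element $\xn^{\an|_{A_1}}\otimes\cdots\otimes\xn^{\an|_{A_l}}$ is $\bn=\an$ itself. To see this, reduce each factor $\xn^{\bn|_{A_i}}$ modulo $I_{\lambda_i}$ into the descent basis; since $\xn^{\an|_{A_i}}$ is already a leading term, any monomial $\xn^{\bn|_{A_i}}$ whose reduction has $\xn^{\an|_{A_i}}$ in its support must satisfy $\an|_{A_i}\leq_{des}\bn|_{A_i}$ in the descent order restricted to $A_i$. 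Running this across all $i$ and invoking \eqref{eq:desprop} with $S=A_1\cup\cdots$ built up part by part (or directly in its multi-block form) forces $\an\leq_{des}\bn$; combined with the hypothesis $\bn\leq_{des}\an$ this yields $\bn=\an$. Consequently the coefficient of the basis element $\xn^{\an|_{A_1}}\otimes\cdots\otimes\xn^{\an|_{A_l}}$ in the projected relation is exactly $c_{\an}$ times a nonzero scalar, and since the projected relation is $0$ we conclude $c_{\an}=0$.

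The main obstacle I anticipate is the triangularity step, specifically controlling the reduction modulo $I_{\lambda_i}$ on each factor. The descent order is not a monomial order for Gröbner purposes, so the reduction of $\xn^{\bn|_{A_i}}$ into the descent basis is governed by Allen's algorithm \cite{AllenDescent} rather than by a clean division, and I must verify that this reduction only introduces terms that are $\leq_{des}$ the monomial being reduced, so that the leading term $\xn^{\an|_{A_i}}$ can appear in the support of $\xn^{\bn|_{A_i}}$ only when $\an|_{A_i}\leq_{des}\bn|_{A_i}$. Making this inequality precise on each factor and then correctly assembling the per-block inequalities into the global inequality $\an\leq_{des}\bn$ via \eqref{eq:desprop}---across an arbitrary ordered set partition rather than a single two-block split---is where the care is needed; everything else is a matter of unwinding the definitions of $\varphi_\lambda$ and the tensor basis.
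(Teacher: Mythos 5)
Your proposal is correct and is essentially the paper's own argument in unrolled form: the paper proves the proposition by induction on the number of parts, applying $\varphi_S$ with $S=A_1$ to reduce only the first factor modulo $I_{\lambda_1}$, invoking Lemma \ref{lem:tanimap} to see the coefficient of $\xn^{\an'}$ lands in $I_{\mu'}$, and using the triangularity of descent-basis reduction together with property \eqref{eq:desprop} exactly as you do --- your one-shot projection onto the $(A_1|\cdots|A_l)$ summand of $\varphi_\lambda$ is just that induction carried out on all blocks simultaneously. The triangular-reduction fact you flag as the main obstacle (reduction modulo $I_m$ only introduces descent monomials $\leq_{des}$ the monomial being reduced, via Allen's algorithm \cite{AllenDescent}) is relied upon by the paper in the same way when it asserts \eqref{eq:linereduction0}, and your multi-block use of \eqref{eq:desprop} follows from the stated two-block version by iterating on complements, so neither point is a gap relative to the paper's proof.
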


\begin{proof}

We prove the proposition by induction on $l$, the number of parts of $\lambda$. The base case of $l=1$ is just the statement that the descent polynomials are independent in $R_{\lambda_1}$. We therefore suppose that $l\geq 2$, and let $\mu=(\lambda_2,...,\lambda_l)$ be the result of removing the first row from $\lambda$.

Let $(A_1|\cdots |A_l)$ be an ordered set partition for which the
$\an|_{A_i}$ are descent compositions, as in the definition of $\majtabs_{\lambda}$. 
Let $S=A_1$ be the first component with
complement $T=\{1,...,n\}-S$, and let 
$\an'=\an|_S,\an''=\an|_T$ be the corresponding elements of $\desc_{\lambda_1},\desc_{n-\lambda_1}$.
We then compute the expansion 
\begin{equation}
\label{eq:linreduction}
\varphi_{S}\left(
\sum_{\bn \leq_{des} \an} c_{\bn} \xn^{\bn}\right)=
%\sum_{\bn'} \xn^{\bn'} \otimes 
%\left(\sum_{\bn''} d_{\bn',\bn''} \xn^{\bn''}\right)
\sum_{\bn',\bn''}d_{\bn',\bn''}  \left(\xn^{\bn'} \otimes 
 \xn^{\bn''}\right)
\end{equation}
in $R_{\lambda_1} \otimes \C[x_{1},...,x_{n-\lambda_1}]$,
with respect to the descent basis on the first
factor, so that 
$\bn'\in \majtabs_{\lambda_1}$.
Then for every pair $(\bn',\bn'')$, we have that $d_{\bn',\bn''}$ is a linear combination of those coefficients $c_{\bn}$ for which
\begin{equation}
\label{eq:linereduction0}
\bn'\leq_{des} \bn|_{S},\quad
\bn''= \bn|_{T},\quad
\bn \leq_{des} \an.
\end{equation}

We would like to apply the induction hypothesis 
to the coefficient of
$\xn^{\an'}\otimes \cdots $ in the right hand
side of \eqref{eq:linreduction}, with $\an''$ in place of $\an$.
To see that \eqref{eq:basisproof1} 
holds for this coefficient, we must check that
\begin{equation}
    \label{eq:basisproof2}
\sum_{\bn''} d_{\an',\bn''} \xn^{\bn''}
\in I_{\mu'},
\end{equation}  
which follows from Lemma \ref{lem:tanimap}.
%To see that we have the same form as \eqref{eq:basisproof1}, 
We also need to see that the coefficients 
in \eqref{eq:basisproof2} take the same triangular form as in
\eqref{eq:basisproof1}, meaning we may take $\bn''\leq_{des} \an''$.
To see this, suppose that $d_{\an',\bn''}\neq 0$ for some $\bn''$.
%meaning that whenever $d_{\an',\bn''}\neq 0$, 
%we have $\bn''\leq_{des} \an''$.
By 
\eqref{eq:linereduction0} with $\bn'=\an'$,
there is a composition $\bn$ with 
$\an'\leq_{des} \bn|_S$, $\bn''=\bn|_T$,
and $\bn\leq_{des} \an$.
By property \eqref{eq:desprop} of the descent order,
we must have $\bn''\leq_{des}\an''$, otherwise we would have that $\an <_{des} \bn$.

Applying the induction hypothesis, we have that 
$d_{\an',\an''}=0$. On the other hand, applying \eqref{eq:linereduction0} again with $\bn''=\an''$, 
we find that $c_{\an}=d_{\an',\an''}$, completing the proof.
\end{proof}

\begin{example}
Take a general sum $\sum_{\bn \leq_{des} \an} c_{\bn}\xn^{\bn}$ in the proof of the proposition, with $\an=011011 \in \desc_{(3,3)}$. The sum has a total of 45 terms, noting that we are including all $\bn\leq_{des} \an$, not just those terms of the same degree. Now suppose we take
\[S=\{1,3,6\},\quad T=\{2,4,5\},\quad 
\an'=011,\quad \bn'=011,\quad \an''=101,\]
in the proof of the proposition.
Extracting the coefficient of $\xn^{\bn'}\otimes \cdots$ in \eqref{eq:linreduction}, we obtain
\[d_{011,101}=c_{011011},\quad d_{011,011}=c_{00
1111},\quad d_{011,100}=c_{011001}-c_{111000},\]
\[d_{011,010}=c_{001101}-c_{101100},\quad d_{011,001}=c_{001011}-c_{101010},\quad d_{011,000}=c_{001001}-c_{101000}.\]
For each $\bn''$, the coefficients $c_{\bn}$ that appear in the expression for $d_{\bn',\bn''}$ all satisfy the properties of
\eqref{eq:linereduction0}.
%which are precisely the terms $d_{\bn',\bn''}$ with $\bn''\leq_{des} \an''$, verifying the triangularity statement. 
Note also that $d_{\bn',\an''}=c_{\an}$,
agreeing with the last paragraph of the proof,
since $\bn'=\an'$.
\end{example}

As a consequence, we have two corollaries:
\begin{cor}\label{cor:youngind}
The set $\raybas_{\lambda,\mu}$ is linearly independent in $N_{\mu}R_{\lambda'}$.
\end{cor}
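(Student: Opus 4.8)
The plan is to deduce the corollary from Proposition \ref{prop:independence} by attaching to each reverse shuffle $\tau$ a distinguished monomial of $\antisym_\mu g_\tau(\xn)$ that lies in $\majtabs_\lambda$ and behaves triangularly for $\leq_{des}$. First I would record the structure of $\antisym_\mu g_\tau$. Writing $\an=\majt(\tau)$ and expanding $\antisym_\mu g_\tau(\xn)=\sum_{w\in S_\mu}\operatorname{sgn}(w)\,\xn^{w\cdot\an}$, the monomials that occur are exactly the rearrangements of $\an$ within the blocks of $\mu$. Since $\tau\in\shuff'_\mu$, the values in each $\mu$-block decrease along $\tau$, which—through the definition of $\majt$ by runs—forces the exponents of $\an$ to be strictly increasing along each block; equivalently $\xn^{\an}=g_\tau(\xn)$ is the unique block-increasing rearrangement and is the $\leq_{des}$-minimal monomial of $\antisym_\mu g_\tau$, while all others are strictly $\leq_{des}$-larger. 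In particular $g_\tau\in\raybas_\lambda$ because $\an\in\majtabs_\lambda$, and $\antisym_\mu g_\tau\neq 0$ as a polynomial since the block entries are distinct. I would also note that distinct $\tau,\tau'\in\jmaj_\lambda\cap\shuff'_\mu$ give $\antisym_\mu g_\tau,\antisym_\mu g_{\tau'}$ with disjoint monomial supports: a shared monomial would be a common within-block rearrangement, forcing the unique block-increasing representatives $\majt(\tau),\majt(\tau')$ to agree.

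Granting this, the triangular step reads as follows. Suppose $\sum_{\tau}c_\tau\,\antisym_\mu g_\tau(\xn)\in\gpideal_{\lambda'}$ with not all $c_\tau$ zero. By the disjoint-support observation the coefficient of each $\xn^{\majt(\tau)}$ in the sum is exactly $c_\tau$, so the block-increasing monomials occurring are precisely the $\majt(\tau)$ with $c_\tau\neq 0$. The natural move is to select the $\leq_{des}$-maximal monomial of the whole combination; by disjointness it comes from a single $\tau^\ast$, its coefficient is $\pm c_{\tau^\ast}$, and the combination is supported weakly below it. Proposition \ref{prop:independence} would then give $c_{\tau^\ast}=0$, a contradiction—provided that maximal monomial lies in $\majtabs_\lambda$. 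This happens, e.g., when the leading monomial is multilinear, as for the first two generators of \eqref{eq:basex}, and in that case the triangular system closes immediately.

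The hard part, and the place this argument must be repaired, is that the $\leq_{des}$-maximal monomial of $\antisym_\mu g_\tau$ is the block-\emph{decreasing} rearrangement of $\majt(\tau)$, which need not be a descent composition once $\lambda$ has a repeated column: already for $\lambda=(3,2)$ the third generator $(x_1-x_2)(x_3^2-x_4^2)$ of \eqref{eq:basex} has leading monomial $x_1x_3^2$, and $(1,0,2,0,0)\notin\majtabs_{(3,2)}$, so Proposition \ref{prop:independence} does not apply to the top term verbatim. To get around this I would mirror the proof of Proposition \ref{prop:independence} rather than work directly in $\gpring_{\lambda'}$: push the relation through the injection $\varphi_\lambda$ of \eqref{eq:tanimap} into $\bigoplus_{\osp(\lambda)}\gpring_{\lambda_1}\otimes\cdots\otimes\gpring_{\lambda_l}$, whose factors are ordinary coinvariant algebras in which $\majtabs$ \emph{is} the complete set of standard monomials and the descent-order leading-term calculus is valid. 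One then argues by induction on $l=l(\lambda)$, carrying the antisymmetrizer $\antisym_\mu$ along and using property \eqref{eq:desprop} to peel off the first $\lambda_1$-block exactly as in Lemma \ref{lem:tanimap}, that the images $\varphi_\lambda(\antisym_\mu g_\tau)$ are independent, with $g_\tau$ surviving as a genuine leading term in the coinvariant factors. I expect essentially all the difficulty to live in this bookkeeping: the blocks of the parabolic $\mu$ and the $\lambda$-block $S$ over which $\varphi_S$ evaluates are a priori unrelated, so one must choose $S$ and the witnessing ordered set partition compatibly with both the reverse-shuffle structure of $\tau$ and the descent order, and check that $\antisym_\mu$ descends to a product of antisymmetrizers on the factors modulo $\leq_{des}$-lower terms. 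The triangularity idea itself is routine; reconciling $\antisym_\mu$ with $\varphi_\lambda$ is the genuine obstacle.
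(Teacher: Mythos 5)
Your diagnosis is correct, and it is worth saying plainly that the route you dissected \emph{is} the paper's proof: the paper disposes of this corollary in two sentences, asserting that antisymmetrizing a monomial coming from a reverse shuffle ``only adds lower terms in the descent order'' and then letting Proposition \ref{prop:independence} finish. Your computation shows that, with the paper's stated conventions, the triangularity runs the other way: for $\tau\in\shuff'_{\mu}$ the value $j+1$ precedes $j$ within each $\mu$-block, hence sits in a strictly earlier run, so $\majt(\tau)$ strictly increases along each block and $g_\tau(\xn)$ is the $\leq_{des}$-\emph{minimal} monomial of $\antisym_{\mu}g_\tau(\xn)$, all other monomials being strictly larger. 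Every element of \eqref{eq:basex} has this shape (the first is $\pm\antisym_{(2,2,1)}(x_2x_4)$ with $\majt((2,4,1,3,5))=(0,1,0,1,0)$, the block-increasing orbit representative). Your counterexample to the naive repair is also valid: the $\leq_{des}$-maximal monomial of the third generator of \eqref{eq:basex} is $x_1x_3^2$, and $(1,0,2,0,0)\notin\majtabs_{(3,2)}$ because the only element of $\majtabs_3$ containing a $2$ is $(0,1,2)$ and no ordered set partition of $(1,0,2,0,0)$ realizes it; so Proposition \ref{prop:independence} cannot be applied verbatim to the top coefficient. Your auxiliary observations (disjoint monomial supports for distinct $\tau$, since distinct reverse shuffles give distinct block-increasing orbit representatives, and nonvanishing of $\antisym_{\mu}g_\tau$ because the within-block exponents are distinct) are likewise correct.

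The genuine gap is that your proposal stops exactly where the remaining work begins: the announced repair via the map $\varphi_\lambda$ of \eqref{eq:tanimap} is never executed, and you yourself name the unresolved crux --- $\antisym_{\mu}$ does not interact simply with $\varphi_S$, since $S_\mu$ mixes variables across $S$ and its complement unless $S$ is a union of $\mu$-blocks, and the witnessing ordered set partition for $\majt(\tau)$ carries no a priori compatibility with $\mu$. No lemma in your text supplies that compatibility, nor do you verify that $g_\tau$ survives as a leading term in the coinvariant factors after antisymmetrization, so as written the corollary is asserted but not proved: what you have is a sound refutation of the two-line argument (which, read literally, has its inequality reversed and, even after correcting the direction, fails because the maximal monomial $\overline{\majt(\tau)}$ need not lie in $\majtabs_\lambda$) together with a program for an alternative. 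To close it you would need either the deferred compatibility statement for $\antisym_{\mu}$ and the splitting maps, carried through the induction on $l(\lambda)$ as in Proposition \ref{prop:independence}, or some strengthened leading-term statement for $I_{\lambda'}$ that controls coefficients at the block-decreasing monomials $\overline{\majt(\tau)}$; neither is present in the proposal.
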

\begin{proof}
When no second partition $\mu$ is specified, this follows from the proposition. Otherwise, we notice that antisymmetrizing any monomial that comes from a reverse shuffle only adds lower terms in the descent order, so they must be independent in $N_\mu R_{\lambda'}\subset R_{\lambda'}$.
\end{proof}
\begin{cor}
\label{cor:leq}    
The size of the basis set satisfies 
\begin{equation}
\label{eq:basisleq}
    |J^{\maj}_{\lambda}|\leq \binom{n}{\lambda'}=\frac{n!}{\lambda'_1!\cdots \lambda'_h!}.
\end{equation}
\end{cor}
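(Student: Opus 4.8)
The plan is to deduce the inequality \eqref{eq:basisleq} directly from the linear independence already established in Proposition \ref{prop:independence}, combined with the known dimension of the Garsia-Procesi module. This is the ``easy half'' of the eventual equality \eqref{eq:jsize}: the reverse bound is precisely what the parking-function identification and the compositional Shuffle Theorem will later supply, and it lies outside the scope of this corollary.

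First I would record that the three quantities $|\jmaj_{\lambda}|$, $|\majtabs_{\lambda}|$, and $|\raybas_{\lambda}|$ all coincide. Since $\majt \colon S_n \to \majtabs_n$ is a bijection, the rule $\an \mapsto \majt^{-1}(\an)$ restricts to a bijection $\majtabs_{\lambda} \to \jmaj_{\lambda}$, giving $|\jmaj_{\lambda}| = |\majtabs_{\lambda}|$. Likewise distinct compositions $\an$ produce distinct monomials $\xn^{\an}$, so $\an \mapsto \xn^{\an}$ is injective and $|\raybas_{\lambda}| = |\majtabs_{\lambda}|$. Hence it suffices to bound the cardinality of $\raybas_{\lambda}$, regarded as a subset of $R_{\lambda'}$.

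Next I would upgrade the triangularity statement of Proposition \ref{prop:independence} to honest linear independence of $\raybas_{\lambda}$ in $R_{\lambda'}$. Given a relation $\sum_{\an \in \majtabs_{\lambda}} c_{\an} \xn^{\an} \in I_{\lambda'}$, let $\an^{*}$ be the $\leq_{des}$-maximal index with $c_{\an^{*}} \neq 0$; every surviving term then satisfies $\an \leq_{des} \an^{*}$, so the relation has exactly the form required by Proposition \ref{prop:independence} (setting the coefficients of all other monomials to zero), forcing $c_{\an^{*}} = 0$ and a contradiction. This is the content of Corollary \ref{cor:youngind} with trivial $\mu$, which I would simply cite. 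Finally I would invoke $\dim_{\C} R_{\lambda'} = \binom{n}{\lambda'}$: the ungraded Frobenius character of $R_{\lambda'}$ is $\Ht_{\lambda'}[X;1] = \sum_{\nu} K_{\nu,\lambda'} s_{\nu}$, whose dimension $\sum_{\nu} K_{\nu,\lambda'} f^{\nu}$ is that of $\mathrm{Ind}_{S_{\lambda'}}^{S_n}(\mathrm{triv})$ by Young's rule, namely $n!/(\lambda'_1! \cdots \lambda'_h!)$; equivalently one reads this off the Hilbert series of \cite{garsia1992certain}. Since a linearly independent subset of a finite-dimensional space has cardinality at most its dimension, we obtain $|\jmaj_{\lambda}| = |\raybas_{\lambda}| \leq \dim_{\C} R_{\lambda'} = \binom{n}{\lambda'}$.

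There is no serious obstacle here, as Proposition \ref{prop:independence} does the real work; the only points needing care are the bookkeeping that converts triangularity into genuine independence and the use of the correct normalization $\dim R_{\lambda'} = \binom{n}{\lambda'}$. The genuinely difficult direction, that this bound is in fact attained, is reserved for the later combinatorial argument.
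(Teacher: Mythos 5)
Your proof is correct and takes essentially the same route as the paper, which states Corollary \ref{cor:leq} as an immediate consequence of Proposition \ref{prop:independence}: the monomials $\raybas_{\lambda}$, in bijection with $\jmaj_{\lambda}$, are linearly independent in $R_{\lambda'}$ (by the maximal-term argument you spell out), so their number is bounded by $\dim_{\C} R_{\lambda'} = \binom{n}{\lambda'}$, the dimension of the Garsia-Procesi module known from \cite{garsia1992certain}. Your only additions—the bijection bookkeeping and the Young's-rule computation of the dimension—are exactly the steps the paper leaves implicit.
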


\subsection{Proof of the dimension count}

Recall that $\ribtabs^0_{\lambda}$ was the set of ribbon-shaped
tableaux $\tilde{T}$ satisfying two properties related to the
possible types of dinv-pair, described in Defintion \ref{def:minrib}.
Define a map $\Psi_{\lambda} : \ribtabs^0_{\lambda} \rightarrow \majtabs_\alpha$ by setting $\Psi_{\lambda}(\tilde{T})=\an$,
where $a_i$ is the height of the entry containing the number $i$.
An example is given in Example \ref{ex:alg}
below.
Since $\Psi_{n}:\ribtabs_n\rightarrow \desc_n$ 
is a bijection in the 
case of a single tableau, we have that any element
$\tilde{T}\in \ribtabs_{\lambda}$ is uniquely determined by
the data of $\Psi_{\lambda}(\tilde{T})$ together with the ordered set partition $(A_1|\cdots|A_l)$ for which $A_i$ contains the entries
that appear in $T_i$.
We will prove that an element $\tilde{T}\in \ribtabs_{\lambda}^0$ is determined by just the data of $\Psi(\tilde{T})$. In other words,
\begin{prop}
\label{prop:injective}
The restriction of $\Psi_{\lambda} : 
\rib^{0}_{\lambda}\rightarrow \desc_{\lambda}$ is 
injective.
\end{prop}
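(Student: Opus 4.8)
The plan is to prove that a tableau $\tilde{T}\in\ribtabs^0_{\lambda}$ can be recovered from its height composition $\an=\Psi_{\lambda}(\tilde T)$. The observation made just before the statement already reduces this to a single point: since the single–ribbon map $\Psi_{\lambda_i}$ is a bijection, any $\tilde T\in\ribtabs_{\lambda}$ is determined by the pair consisting of $\an$ and the ordered set partition $(A_1|\cdots|A_l)$ recording which entries occupy each component $T_i$ (restricting $\an$ to $A_i$ and standardizing recovers $T_i$ via $\Psi_{\lambda_i}$). Thus it suffices to show that, for $\tilde T\in\ribtabs^0_{\lambda}$, the ordered set partition is itself forced by $\an$. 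Equivalently: although a fixed $\an\in\desc_{\lambda}$ may be a shuffle of descent compositions in several ways, exactly one of those ways yields a tableau satisfying the two conditions of Definition \ref{def:minrib}. This is also precisely what is needed downstream, since combining injectivity with Corollary \ref{cor:leq} and \eqref{eq:ribbonmulti} pinches $|\jmaj_{\lambda}|=\binom{n}{\lambda'}$ and makes $\Psi_{\lambda}$ a bijection onto $\desc_{\lambda}$.

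I would reconstruct the set partition by induction on the number of parts $l$, peeling off the component $T_1$ of maximal height. The structural input is the analysis carried out in the proof of Proposition \ref{prop:mindd}: for a tableau in $\ribtabs^0_{\lambda}$ the component heights are weakly decreasing, $\height(T_1)\ge\cdots\ge\height(T_l)$; every box of $T_j$ has, for each $i<j$, exactly one dinv partner in $T_i$; no dinv pairs occur within a single component; and the bottom row is globally increasing. Because $\height(T_1)\ge\height(T_j)$ for all $j$, every row met by a shorter component is also met by $T_1$, so the requirement that each box of a later component have exactly one dinv partner in $T_1$ — a partner that is either a same-row inversion (type i) or a consecutive-height pair (type ii), per the case distinction in Proposition \ref{prop:mindd} — together with the increasing bottom row, pins down, row by row, which boxes belong to $T_1$. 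Once $A_1$ is shown to be determined, I delete $T_1$; the surviving tuple lies in $\ribtabs^0_{(\lambda_2,\dots,\lambda_l)}$ after standardizing the remaining entries, since the conditions of Definition \ref{def:minrib} only involve pairs of components and are inherited (the bottom row of the sub-tuple is a subsequence of the original, hence still increasing). The inductive hypothesis then finishes the reconstruction. An explicit form of this procedure is what is illustrated in Example \ref{ex:alg}.

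The main obstacle is exactly this forcing step: showing that at each stage the conditions of Definition \ref{def:minrib} leave no freedom in the placement of an entry. The conditions are relational across components, so the real danger is precisely the phenomenon that makes $|\desc_{\lambda}|$ hard to compute in the first place — two genuinely different set partitions inducing the same heights while both looking admissible. The delicate case is when several components reach the same height and several entries share that height: here the interaction between the dinv–pairing count (condition \ref{item:minribdinv}) and the increasing–bottom–row condition (condition \ref{itemi:minribbottom}) must be shown to single out a unique membership for each box. I would handle this by a counting argument on each row, using that a taller component contributes a box to every row of a shorter one, so that the ``exactly one partner'' bookkeeping is rigid. I expect the technical heart of the proof to be verifying both that this never leaves an actual choice and that the reconstructed tuple is a legitimate ribbon tableau — i.e. the values placed really do increase in the North and East directions — rather than merely a formal assignment of entries to components.
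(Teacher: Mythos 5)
Your reduction is the same as the paper's: since $\Psi_n$ is a bijection for a single ribbon, $\tilde{T}$ is determined by $\an$ together with the ordered set partition, so injectivity amounts to showing the partition is forced by $\an$; you also correctly plan to peel off $T_1$ and induct, which is how the paper organizes the argument. But the proposal stops exactly where the paper's proof begins. The forcing step is not a routine verification that you may defer — it is the entire content of the proposition, and you explicitly leave it as an expectation (``I would handle this by a counting argument\dots I expect the technical heart of the proof to be verifying\dots''). The paper does not argue by abstract rigidity of the ``exactly one dinv partner'' bookkeeping. Instead it exhibits an explicit greedy reconstruction (Algorithm \ref{alg:ray}, equivalently Algorithm 1$'$) that consumes $\an$ alone, and then proves, via five concrete pattern constraints on minimal tableaux (Lemma \ref{lem:patterns}), that when the input is $\Psi_{\lambda}(\tilde{T})$ the algorithm's path stays inside $T_1$: an ascending phase that reaches the top row (using patterns \ref{item:pattern1}--\ref{item:pattern3} to show the greedy choice never jumps to a later component), followed by a descending phase, where the key structural fact is that every row of $T_1$ strictly below the maximal height of $T_2,\dots,T_l$ contains at most two boxes of $T_1$ (pattern \ref{item:pattern4}) and both must already have been visited (pattern \ref{item:pattern5}).

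This matters because your proposed mechanism — row-by-row counting using that a taller component meets every row of a shorter one — does not obviously close the delicate case you yourself flag: several components of equal height with several same-height entries, where two different assignments of labels to components can both satisfy the ``one partner per pair of components'' count. Ruling that out is precisely what patterns \ref{item:pattern2}, \ref{item:pattern4}, and \ref{item:pattern5} accomplish, and none of them follows from the partner count alone; each requires a short ad hoc argument from Definition \ref{def:minrib} (e.g.\ exhibiting a box that would lie in two dinv pairs, or a forbidden dinv pair in the bottom row). Without these, or some equivalent substitute, the claim that the conditions ``leave no freedom'' is unsubstantiated, so the proposal as written has a genuine gap at its central step. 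One smaller caution: your inductive step of deleting $T_1$ and standardizing is fine, but note the paper's induction is run through the algorithm (components are reconstructed in order $k=1,\dots,l$ with earlier entries marked unavailable), which is what makes the deletion step legitimate without re-deriving minimality of the truncated tuple from scratch.
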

Then the reverse equality in equation \eqref{eq:basisleq}
will follow with the help of \eqref{eq:ribbonmulti}, establishing
that $\basis_{\lambda}^{\maj}$ is a basis, and that the restriction of $\Psi_{\lambda}$ is a bijection.

To prove the proposition, 
we will construct an algorithm that reproduces the ordered set partition $(A_1|\cdots|A_l)$
from the data of $\Psi_{\lambda}(\tilde{T})$,
whenever $\tilde{T} \in \ribtabs^0_{\lambda}$ is minimal. To start, we define a sequence
$(\tilde{a}_1,\tilde{a}_2,...)$ out of $\an$
by setting $a_{kn+i}=a_i+k$ for all $k\geq 0$, and $i\in \{1,...,n\}$, such as the terms in Example \ref{ex:alg} below. The algorithm is described:

\begin{algo}\label{alg:ray}
Starting with a descent composition $\an$, and a partition $\lambda=(\lambda_1,...,\lambda_l)$, we will produce an ordered set partition
$(A_1|\cdots|A_l)\in \osp(\lambda)$.
\end{algo}
\begin{enumerate}
    \item For each $k$ from $1$ to $l$, do the following:
    \begin{enumerate}
        \item Initialize $i=0$, and $A_k=\emptyset$.
    \item \label{item:algnextind} For $m$ from 0 to $\lambda_k-1$,
locate the smallest index 
$i'=an+j>i$ for which
$\tilde{a}_{i'}=m$, and $j$ add not yet been 
added to any $A_1,..,A_k$. If none exists, we say the algorithm does not terminate.
\item Set $A_k=A_k \cup \{j\}$, and 
set $i=i'$.
    \end{enumerate}   
\item Return $(A_1|\cdots|A_l)$.
\end{enumerate}
If $\an=\Psi_{\lambda}(\tilde{T})$, it
is useful to describe the result of Algorithm 
\ref{alg:ray} directly in terms of $\tilde{T}$.
It is easy to see that the following is equivalent:
\begin{algoa}\label{alg:ray1}
Let $\tilde{T}\in \ribtabs_{\lambda}$. Recall that $\sigma(x)$ denotes the value of the entry in box $x$.
\end{algoa}
\begin{enumerate}
    \item For each $k$ from $1$ to $l$, do the following:
    \begin{enumerate}
        \item Let $x$ be the square in
    the bottom row of $\tilde{T}$ containing
the smallest available
(i.e. has not yet been added to any $A_1,...,A_k$) value of $\sigma(x)$. Initialize $A_k=\{\sigma(x)\}$.
\item \label{item:algnextind1} For $m$ from 1 to $\lambda_k-1$, do the following:
\begin{enumerate}
    \item If there is an available square $x'$ in the row above $x$ with $\sigma(x')>\sigma(x)$, let $x'$ be the one for which that value is the lowest.
    \item Otherwise, let $x'$ be the available square with the lowest value of 
    $\sigma(x')$ in the highest row which is not higher than that of $x$.
\end{enumerate}
If either case is successful, 
set $A_k=A_k \cup \{x'\}$, and $x=x'$.
Otherwise, the algorithm does not terminate.
    \end{enumerate}   
\item Return $(A_1|\cdots|A_l)$.
\end{enumerate}
In order to prove Proposition \ref{prop:injective},
it suffices to show that the sets $A_k$ resulting from Algorithm 1$'$ contain precisely the entries of $T_k$
whenever 
$\tilde{T}\in \ribtabs_{\lambda}^0$
is minimal.

\begin{example}
\label{ex:alg}
Consider the element $\tilde{T}\in 
\ribtabs^0_{6,2,1}$, and its corresponding descent composition given by
$$ \Psi_{6,2,1} :
       \begin{ytableau}
        2 & 4 & \none & \none & \none & \none & \none & \none \\
        \none & 1 & 6 & 7 & \none & 9 & \none & \none  \\
        \none & \none & \none & 3 & \none & 5
        & \none & 8
    \end{ytableau}
    \mapsto (1,2,0,2,0,1,1,0,1).
    $$
Then the sequence 
    $(\tilde{a}_1,\tilde{a}_2,...)$ 
    is given by
    \[(1,2,0,2,0,1,1,0,1|2,3,1,3,1,2,2,1,2|3,4,2,4,...)\]
Algorithm \ref{alg:ray} returns the set partition
$(361246|59|8)$, where we are writing the elements in order to describe the order in which they are added. We can see that this is the same sequence given by Algorithm 1$'$ applied to $\tilde{T}$, and that the elements of $A_k$ are the entries of $T_k$.    
\end{example}

Before proving the proposition, we need two lemmas about the minimal tableaux in
$\ribtabs_{\lambda}$.
\begin{lem}
\label{lem:patterns}
The elements $\tilde{T} \in \ribtabs^0_{\lambda}$ satisfy the following rules. Each pattern below is assumed to be in separate components $T_i,T_j$
with $i<j$, as there are no dinv pairs within a single one.
\begin{enumerate}
\item \label{item:pattern1} The entries in the lowest row
are in decreasing order.
    \item  \label{item:pattern2} Whenever we have the pattern $$ 
    \begin{ytableau}
        a & \none & \none & \none & \none & \none & b \\
        c & \none & \none & \none&\none & \none & \none
    \end{ytableau}
    $$
 we cannot have $c<b<a$.
\item \label{item:pattern3}  If we have the pattern
$$ 
    \begin{ytableau}
        a & b & \none & \none & \none & \none & c\\
    \end{ytableau}
    $$
  we must have
    $a<c$.
\item \label{item:pattern4}  The following pattern never appears:
$$ 
    \begin{ytableau}
    \none & \none & \none & \none & \none & \none & e\\
        a & b & c & \none & \none  & \none  & d\\
    \end{ytableau}
    $$
\item \label{item:pattern5} If we have
$$ 
    \begin{ytableau}
    a & \none & \none & \none & \none & \none & d\\
        b & c & \none & \none & \none & \none & e\\
    \end{ytableau}
    $$
    then we must have $a<c$.
    \end{enumerate}
\end{lem}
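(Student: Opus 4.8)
The plan is to derive all five rules from the two defining conditions of $\ribtabs^0_{\lambda}$ in Definition \ref{def:minrib}: condition (\ref{item:minribdinv}), that every dinv pair is cross-component and each box of a later component $T_j$ lying above the bottom row has \emph{exactly one} dinv partner in each earlier $T_i$; and condition (\ref{itemi:minribbottom}), that the bottom rows carry no dinv pair. Throughout I keep in mind the two shapes that create a dinv pair: the ``type $i$'' shape of two equal-height boxes with the left (earlier-component) entry larger, and the ``type $ii$'' shape of a box $\sigma(x)$ lying one row below and to the left of a box $\sigma(y)$ which has a box directly beneath it, with $\sigma(x)<\sigma(y)$. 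The uniform strategy is: assume the forbidden inequality holds in the displayed local pattern, and exhibit a single box $\sigma(y)$ of the later component that thereby acquires two distinct dinv partners in the earlier component, contradicting the ``exactly one'' clause of (\ref{item:minribdinv}) (or a forbidden bottom pair, contradicting (\ref{itemi:minribbottom})). Rule (\ref{item:pattern1}) is simply a restatement of (\ref{itemi:minribbottom}).

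First I would dispatch the rules that only involve the type $i$ shape. For (\ref{item:pattern3}), if $a>c$ then, since $a$ and $b$ are adjacent in an increasing row we also have $b>a>c$, so both $(a,c)$ and $(b,c)$ are type $i$ pairs; this gives the later box $c$ two partners in $T_i$, which is impossible, so $a<c$. Rule (\ref{item:pattern4}) is the clean use of the type $ii$ shape: because $e$ lies directly above $d$, the square below $e$ is present and plays the role of the gray square, so \emph{every} bottom-row box of $T_i$ lying to its left with a smaller label pairs with $e$. By (\ref{item:pattern1}) the run $a,b,c$ consists of bottom boxes all smaller than $d<e$ and all to the left of $e$; hence $e$ would have more than one partner, and the configuration cannot occur.

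Rules (\ref{item:pattern2}) and (\ref{item:pattern5}) use the same idea but require tracking which boxes actually have a square beneath them, since that is exactly what switches the type $ii$ pairs on. For (\ref{item:pattern5}), assuming $a\geq c$ I would locate the later box that simultaneously receives a type $i$ partner along its row and a type $ii$ partner through the covered corner beneath it, again producing two partners. For (\ref{item:pattern2}) the subtle case is when the top-right box $b$ has no square directly beneath it, so no type $ii$ pair is generated on the nose; here I would fall back on the explicit row-by-row matching and the exact identity $\dinv_{i,j}(\tilde T)=\lambda_j$ established in the proof of Proposition \ref{prop:mindd}, using bottom-row alignment to force either a second partner or a forbidden bottom pair.

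The main obstacle is precisely this ribbon bookkeeping: whether a box at positive height has a box immediately below it (which creates its gray square) is what decides the presence of type $ii$ pairs, and the degenerate shapes must be excluded using the no-$2\times2$ condition together with the fact that every component has a box in the bottom row. I expect rule (\ref{item:pattern2}) and the passage from ``at least one'' to ``exactly one'' partner to be the delicate points; the cleanest formulation routes every argument through the partner-matching of Proposition \ref{prop:mindd}, so that each forbidden inequality manifests directly as a failure of that matching to be single-valued.
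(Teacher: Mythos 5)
Your overall strategy coincides with the paper's: each forbidden pattern is refuted by exhibiting either a dinv pair whose right-hand box lies in the bottom row, or a box of the later component $T_j$ with two dinv partners in $T_i$; and your reading of Definition \ref{def:minrib} (exactly one partner in each earlier component for every box of $T_j$ above the bottom row, none for bottom boxes) is the correct reconciliation of its two clauses. However, there are genuine gaps. First, your proof of item \ref{item:pattern4} treats only the case where the row containing $a,b,c,d$ is the bottom row: you invoke item \ref{item:pattern1} to conclude $a<b<c<d<e$, which is unavailable at positive height, and at positive height that chain can simply be false (all of $a,b,c$ may exceed $d$). The lemma is needed in full generality: the proof of Proposition \ref{prop:injective} applies item \ref{item:pattern4} to ``any row below the maximum height,'' not just the bottom one. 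The repair stays inside your framework: since $a<b<c$ increase along a ribbon row, each of $a,b,c$ forms a pair with $d$ (type $i$, if larger than $d$) or with $e$ (type $ii$, if smaller than $d$, hence smaller than $e$), so the two boxes $d,e$ absorb at least three pairs with $T_i$ while their combined budget is at most two (and is smaller still when $d$ is in the bottom row), a contradiction at every height.

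Second, the ``subtle case'' you isolate in item \ref{item:pattern2} --- when $b$ has no square directly beneath it --- is spurious, and your proposed fallback (an unspecified appeal to the row-by-row matching and the identity $\dinv_{i,j}(\tilde T)=\lambda_j$) is not an argument as it stands. The gray square in the type-$ii$ template is not a hypothesis: the parking-function definition of a dinv pair constrains only levels, relative order, and labels, and the paper's own proof of Proposition \ref{prop:mindd} produces the pair $(\sigma(z),\sigma(y))$ where the box beneath $y$ is never assumed to exist (the auxiliary box $z$ sits under $x$, in $T_i$, not under $y$). So if $c<b<a$, then $(c,b)$ is a type-$ii$ pair and $(a,b)$ a type-$i$ pair unconditionally, and $b$ --- which is never in the bottom row, being one level above $c$ --- has two partners; your easy-case one-liner already covers everything. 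Relatedly, your treatment of item \ref{item:pattern5} is only a plan: the actual argument (as in the paper) needs the intermediate step that $b<d<c$, since otherwise $d$ or $e$ acquires two partners (using $e<d$), before one can force $a<d$ and conclude $a<c$.
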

\begin{proof}
In every case, we can show that a forbidden pattern contradicts one of the criteria
in Definition \ref{def:minrib}, ether that there is a dinv pair in the first row, or that one of the squares on the right is contained in two-dinv pairs with the component on the left.
We prove the final item. First, we must
have that $b<d<c$, otherwise there would be two dinv pairs containing either $d$ or $e$, since
$e<d$. But we also have $a<d$, otherwise $e$ would be in two dinv pairs.
\end{proof}

\begin{figure}
    \centering
    \includegraphics{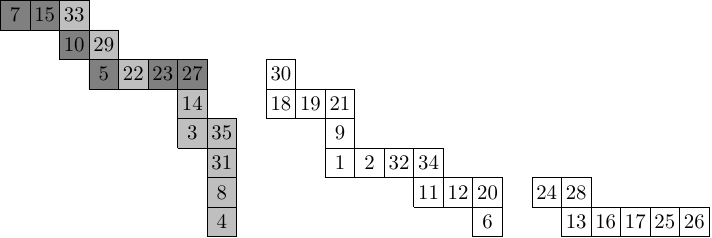}
    \caption{Two phases of the Algorithm
    1$'$ applied to the first component of a ribbon
    tableau $\tilde{T} \in \ribtabs^0_{15,13,7}$.
    Light gray corresponds to the ascending phase, and dark gray is the descending phase.}
    \label{fig:twophase}
\end{figure}

We can now prove Proposition \ref{prop:injective}.
\begin{proof}
We pointed out above that a ribbon tableau
$\tilde{T}$ is determined uniquely by 
the data of $\Psi_{\lambda}(\tilde{T})$
together with the set of entries $\sigma(x)$
in each component $T_i$.
    It therefore suffices to show that the ordered set partition $(A_1|\cdots|A_l)$ resulting from Algorithm 1$'$ terminates, and 
    that $A_i$ contains exactly the entries of $T_i$ for each $i$.
    We may prove this just for the first component $T_1$, as the remaining parts would follow by induction.

    We show that the algorithm proceeds into two different phases, ascending and descending, shown in Figure \ref{fig:twophase}. First, we claim that
    Algorithm 1$'$ will reach the top row of $T_1$, and that each square (shown in gray in the figure) added during that time is contained in $T_1$.
By item \ref{item:pattern1} of Lemma \ref{lem:patterns}, the first box will always be in $T_1$. In items \ref{item:pattern2} and \ref{item:pattern3}, we will never select the square on the right in the corresponding picture, and hence will stay in $T_1$.
Since there is always a path to the top in within a single tableau, we will reach the top.

The descending phase consists of all those remaining steps following the ascending phase,
shown in dark gray.
We claim that all boxes remaining in $T_1$
must reside in the row of the highest boxes
among $(T_2,...,T_l)$, or higher. In other words, the dark gray region of the figure, 
interpreted as the complement of the light gray region within $T_1$,
is entirely above or equal to the maximum height all those squares in the subsequent figures, which in the example would be row 5 containing the box $\sigma(x)=30$.
The reason is that any row which contains only a single square of $T_1$ must have been crossed during the ascending phase. But by item
\ref{item:pattern4},
any row below the maximum height must contain at most two boxes in $T_1$. By item 
\ref{item:pattern5}, both boxes in such a row would have to have been crossed already (we can only enter through $c$, and we would have to go through $b$ to get to $a$ in the corresponding pattern).

Thus, the second phase of the algorithm must simply choose each entry in order from $T_1$ until the final row which is the same height as the highest box from the remaining tableaux.
In this row, we must still choose all remaining entries from within $T_1$, otherwise we would have necessarily end up in the situation of item
\ref{item:pattern2} again.
\end{proof}

\begin{proof}[Proof of Theorem \ref{thm:majbasis}]
We have seen that $\Phi_{\lambda}: \ribtabs_{\lambda}^0\rightarrow \desc_{\lambda}$ is a injective, so 
by Corollary \ref{cor:leq}, it must be a bijection. Since the corresponding permutation $\sigma\in \jlamaj$ corresponds to the reading word of $\tilde{T}$, we find that
$|\jlamaj\cap \shuff(\mu)|$
and 
$|\jlamaj \cap \shuff'(\mu)|$ are the corresponding coefficients of the Hall-Littlewood polynomials from Proposition
\ref{prop:ribbonhall}.
Since the bases are linearly independent
by Corollary \ref{cor:youngind} and have the same size, they must be a basis. The form of Proposition \ref{prop:independence} shows that the leading term statement.
\end{proof}

\begin{cor}
\label{thm:hl}
The modified Hall-Littlewood polynomial 
is given by 
$\Ht_\lambda[X;0,t]=\Ct_\lambda[X;t]$ where
\begin{equation}
    \begin{split}\Ct_\lambda[X;t] & = \sum_{\mu \vdash n}\Big( \sum_{\sigma \in \jmaj_{\lambda} \cap \shuff(\mu)} t^{\maj(\sigma)}\Big)m_\mu,\\
\omega \Ct_\lambda[X;t] & = \sum_{\mu \vdash n}\Big( \sum_{\sigma \in \jmaj_{\lambda} \cap \shuff'(\mu)} t^{\maj(\sigma)}\Big)m_\mu.
\end{split}
\end{equation}
\end{cor}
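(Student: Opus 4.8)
The plan is to obtain both displayed identities by transporting Proposition~\ref{prop:ribbonhall} through the bijection built in the proof of Theorem~\ref{thm:majbasis}. Recall that Proposition~\ref{prop:injective} together with Corollary~\ref{cor:leq} shows that the restriction $\Psi_{\lambda}\colon \ribtabs^{0}_{\lambda}\to \desc_{\lambda}$ is a bijection, and that the permutation $\majt^{-1}(\Psi_{\lambda}(\tilde{T}))$ is exactly the reading word $\readword(\tilde{T})$. First I would record the three compatibilities that make this bijection statistic preserving. If $\tilde{T}\mapsto \an=\Psi_\lambda(\tilde T)$ with associated $\sigma=\majt^{-1}(\an)\in \jmaj_{\lambda}$, then (i) $\an$ is the major index table of $\sigma=\readword(\tilde T)$; (ii) $\area(\tilde T)=|\an|=\maj(\sigma)$, since the area of the height labelling is the norm of the descent composition, which is the major index; and (iii) because $\sigma=\readword(\tilde T)$ on the nose, one has $\readword(\tilde T)\in \shuff(\mu)$ (resp.\ $\shuff'(\mu)$) if and only if $\sigma\in \jmaj_{\lambda}\cap \shuff(\mu)$ (resp.\ $\jmaj_{\lambda}\cap \shuff'(\mu)$).

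With these compatibilities in hand, substituting the bijection into the two lines of Proposition~\ref{prop:ribbonhall} rewrites each sum $\sum_{\tilde T} t^{\area(\tilde T)}$ over ribbons with a prescribed reading word as the sum $\sum_{\sigma} t^{\maj(\sigma)}$ over the matching set of permutations. The first line then gives the coefficient of $m_\mu$ in $\Ht_{\lambda'}[X;t]$ as the sum over $\jmaj_{\lambda}\cap \shuff(\mu)$, and the second gives the coefficient of $m_\mu$ in $\omega\Ht_{\lambda'}[X;t]$ as the sum over $\jmaj_{\lambda}\cap \shuff'(\mu)$. Since the $m_\mu$ form a basis of $\Lambda^{(n)}$, matching every coefficient identifies the full symmetric functions with $\Ct_\lambda[X;t]$ and $\omega\Ct_\lambda[X;t]$, which is the claim.

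For the conceptual meaning, and as an independent check of the $\omega$-identity, I would also run the representation-theoretic argument. Under the Springer action one has the graded Frobenius character $\frob_t(R_{\lambda'})=\Ht_{\lambda'}[X;t]$, and from $\frob(V\otimes \mathrm{sgn})=\omega\,\frob(V)$ the coefficient of $m_\mu$ in $\omega\Ht_{\lambda'}[X;t]$ is the graded dimension of the sign-isotypic subspace $N_\mu R_{\lambda'}$. By Theorem~\ref{thm:majbasis}(2) this subspace has $\raybas_{\lambda,\mu}$ as a graded basis, whose Hilbert series is $\sum_{\sigma\in \jmaj_{\lambda}\cap \shuff'(\mu)} t^{\maj(\sigma)}$; this reproduces the second identity, the first following the same way from the $S_\mu$-invariants in place of the antisymmetrizer (equivalently, from the first line of Proposition~\ref{prop:ribbonhall}).

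Since Theorem~\ref{thm:majbasis} and Proposition~\ref{prop:ribbonhall} do all the heavy lifting, the corollary is essentially bookkeeping, and that bookkeeping is the only place I expect trouble. The delicate points are verifying compatibilities (i)--(iii) precisely, and keeping the conjugation $\lambda\leftrightarrow\lambda'$ and the shuffle/reverse-shuffle (equivalently invariant/sign-isotypic, i.e.\ the $\omega$) correspondence aligned between the two formulas; it is here that a stray transpose or sign could slip in. I would therefore pin the conventions down on a small case such as $\lambda=(2,1)$, comparing the $m_\mu$-coefficients term by term, before asserting the identity in general.
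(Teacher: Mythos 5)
Your proposal matches the paper's own derivation: the paper obtains this corollary exactly by transporting Proposition~\ref{prop:ribbonhall} through the bijection $\Psi_{\lambda}\colon \ribtabs^{0}_{\lambda}\to\desc_{\lambda}$ established in the proof of Theorem~\ref{thm:majbasis}, using that the permutation in $\jmaj_{\lambda}$ is the reading word of $\tilde{T}$ and that area corresponds to maj, precisely your compatibilities (i)--(iii). Your supplementary representation-theoretic check via the Springer action and Theorem~\ref{thm:majbasis}(2) is a reasonable independent confirmation, and your caution about the $\lambda\leftrightarrow\lambda'$ bookkeeping is well placed, since Proposition~\ref{prop:ribbonhall} pairs $\Ht_{\lambda'}$ with $\ribtabs^{0}_{\lambda}$ (hence with $\jmaj_{\lambda}$), so the corollary as printed does appear to carry a stray conjugation of exactly the kind you flag.
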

We also obtain an alternate expression for $R_{\lambda}$:
\begin{cor}
The map $\varphi_\lambda$
from \eqref{eq:tanimap} is injective, 
and its image is isomorphic to $R_{\lambda}$.
\end{cor}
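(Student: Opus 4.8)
The plan is to deduce injectivity of $\varphi_\lambda$ directly from Theorem \ref{thm:majbasis} together with the descent-order triangularity already exploited in the proof of Proposition \ref{prop:independence}. Since $\varphi_\lambda$ is a $\C$-linear map out of the finite-dimensional space $R_{\lambda'}$, and $\raybas_\lambda=\{\xn^{\an}:\an\in\desc_\lambda\}$ is a basis of $R_{\lambda'}$ by Theorem \ref{thm:majbasis}, it suffices to show that $\varphi_\lambda$ has trivial kernel. I would take a nonzero $f\in R_{\lambda'}$, expand it as $f=\sum_{\an\in\desc_\lambda} d_{\an}\,\xn^{\an}$ in the descent basis, and exhibit a single summand of the target $\bigoplus_{\osp(\lambda)} R_{\lambda_1}\otimes\cdots\otimes R_{\lambda_l}$ from \eqref{eq:tanimap} on which the image $\varphi_\lambda(f)$ is visibly nonzero.

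First I would let $\an^{*}$ be the $\leq_{des}$-maximal composition with $d_{\an^{*}}\neq 0$, and fix an ordered set partition $\omega^{*}=(A_1|\cdots|A_l)\in\osp(\lambda)$ witnessing $\an^{*}\in\desc_\lambda$, so that $\an^{*}|_{A_i}\in\desc_{\lambda_i}$ for every $i$. I would then analyze the $\omega^{*}$-component of $\varphi_\lambda(f)$, which is $\sum_{\an} d_{\an}\bigotimes_i \overline{\xn^{\an|_{A_i}}}$, where the bar denotes reduction modulo $I_{\lambda_i}$ in the $i$th tensor factor. The key structural input is that in each coinvariant factor $R_{\lambda_i}$ the reduction of any monomial expands in the Garsia--Stanton descent basis using only descent monomials $\xn^{\bn_i}$ with $\bn_i\leq_{des}\an|_{A_i}$; this is the same triangularity underlying the identity $\lm_{\leq_{des}}(R_{\lambda_i})=\{\xn^{\bn_i}:\bn_i\in\desc_{\lambda_i}\}$ and invoked in Proposition \ref{prop:independence}. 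I would then track the coefficient of the distinguished tensor $\bigotimes_i \xn^{\an^{*}|_{A_i}}$, each factor of which is a genuine descent-basis element because $\an^{*}|_{A_i}\in\desc_{\lambda_i}$ and so is not reduced further.

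The heart of the argument is that only $\an=\an^{*}$ can contribute to this coefficient. Indeed, if $\bigotimes_i \xn^{\an^{*}|_{A_i}}$ occurs in $\bigotimes_i \overline{\xn^{\an|_{A_i}}}$, then $\an^{*}|_{A_i}\leq_{des}\an|_{A_i}$ for every $i$, and iterating the block-restriction property \eqref{eq:desprop} of the descent order over the partition $(A_1|\cdots|A_l)$ forces $\an^{*}\leq_{des}\an$; combined with the maximality of $\an^{*}$ this gives $\an=\an^{*}$. For $\an=\an^{*}$ itself the tensor $\bigotimes_i \xn^{\an^{*}|_{A_i}}$ occurs with coefficient $1$, so the sought coefficient equals $d_{\an^{*}}\neq 0$. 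Hence $\varphi_\lambda(f)\neq 0$, the kernel is trivial, and $\varphi_\lambda$ is injective; being an injection of a finite-dimensional space, its image is isomorphic to the domain $R_{\lambda'}$, yielding the promised alternate model for the Garsia--Procesi module.

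I expect the one genuinely delicate point to be the passage from ``$\an^{*}|_{A_i}\leq_{des}\an|_{A_i}$ for all $i$'' to ``$\an^{*}\leq_{des}\an$'': property \eqref{eq:desprop} is stated only for a subset and its complement, so I would apply it repeatedly, grouping $A_1$ against $A_2\cup\cdots\cup A_l$ and recursing, while checking that the pairwise hypotheses propagate correctly at each stage. The remaining ingredients---well-definedness of $\varphi_\lambda$ (the preceding corollary), the per-factor descent-order triangularity, and the bookkeeping of the leading coefficient---are routine given the machinery already assembled.
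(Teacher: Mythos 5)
Your argument is correct and is essentially the paper's: the paper obtains injectivity by combining Theorem~\ref{thm:majbasis} with the linear-independence mechanism of Proposition~\ref{prop:independence}, and your proof simply unrolls that mechanism directly --- maximal support term in the descent order, per-factor triangularity of reduction modulo $I_{\lambda_i}$ (the same fact, resting on Allen's reduction algorithm, that underlies \eqref{eq:linereduction0}), and use of \eqref{eq:desprop} at a witnessing ordered set partition to isolate the leading coefficient. The only cosmetic difference is that you reduce all $l$ tensor factors simultaneously and iterate \eqref{eq:desprop} over the blocks (which does go through, since restriction of compositions is transitive and \eqref{eq:desprop} is independent of the ambient length), whereas the paper peels off the first block and inducts on $l$ via Lemma~\ref{lem:tanimap}.
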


\bibliographystyle{plain}
\bibliography{refs}

\end{document}